\newtheorem{Theorem}{Theorem}[section]
\newtheorem{Proposition}[Theorem]{Proposition}
\newtheorem{Lemma}[Theorem]{Lemma}
\newtheorem{Corollary}[Theorem]{Corollary}
\theoremstyle{definition}
\newtheorem{Definition}[Theorem]{Definition}
\newtheorem{Remark}[Theorem]{Remark}
\newcommand{\bTheorem}[1]{
\begin{Theorem} \label{T#1} }
\newcommand{\eT}{\end{Theorem}}
\newcommand{\bProposition}[1]{
\begin{Proposition} \label{P#1}}
\newcommand{\eP}{\end{Proposition}}
\newcommand{\bLemma}[1]{
\begin{Lemma} \label{L#1} }
\newcommand{\eL}{\end{Lemma}}
\newcommand{\bCorollary}[1]{
\begin{Corollary} \label{C#1} }
\newcommand{\eC}{\end{Corollary}}
\newcommand{\bRemark}[1]{
\begin{Remark} \label{R#1} }
\newcommand{\eR}{\end{Remark}}
\newcommand{\bDefinition}[1]{
\begin{Definition} \label{D#1} }
\newcommand{\eD}{\end{Definition}}
\newcommand{\Exp}{\mathbb{E}}
\newcommand{\Ds}{\mathbb{D}_x}
\newcommand{\pig}{P_{\Gamma_{\rm out}}}
\DeclareMathOperator{\supp}{supp}
\newcommand{\bfg}{\mathbf{g}}
\newcommand{\tvm}{\tilde{\vc{m}}}
\newcommand{\bfphi}{\boldsymbol{\varphi}}
\newcommand{\bFormula}[1]{
\begin{equation} \label{#1}}
\newcommand{\eF}{\end{equation}}
\newcommand{\Ov}[1]{\overline{#1}}
\newcommand{\aleq}{\lesssim}
\newcommand{\vr}{\varrho}
\newcommand{\tvr}{\tilde \vr}
\newcommand{\vu}{\vc{u}}
\newcommand{\vm}{\vc{m}}
\newcommand{\vc}[1]{{\bf #1}}
\newcommand{\Div}{{\rm div}_x}
\newcommand{\Grad}{\nabla_x}
\newcommand{\dx}{\,{\rm d} {x}}
\newcommand{\dt}{\,{\rm d} t }
\newcommand{\intO}[1]{\int_{Q} #1 \ \dx}
\newcommand{\D}{{\rm d}}
\newcommand{\ep}{\varepsilon}
\newcommand{\tmop}{\text}
\def\softd{{\leavevmode\setbox1=\hbox{d}%
          \hbox to 1.05\wd1{d\kern-0.4ex{\char039}\hss}}}
\definecolor{Cgrey}{rgb}{0.85,0.85,0.85}
\definecolor{Cblue}{rgb}{0.50,0.85,0.85}
\definecolor{Cred}{rgb}{1,0,0}
\definecolor{fancy}{rgb}{0.10,0.85,0.10}
\newcommand\Cbox[2]{%
    \newbox\contentbox%
    \newbox\bkgdbox%
    \setbox\contentbox\hbox to \hsize{%
        \vtop{
            \kern\columnsep
            \hbox to \hsize{%
                \kern\columnsep%
                \advance\hsize by -2\columnsep%
                \setlength{\textwidth}{\hsize}%
                \vbox{
                    \parskip=\baselineskip
                    \parindent=0bp
                    #2
                }%
                \kern\columnsep%
            }%
            \kern\columnsep%
        }%
    }%
    \setbox\bkgdbox\vbox{
        \color{#1}
        \hrule width  \wd\contentbox %
               height \ht\contentbox %
               depth  \dp\contentbox
        \color{black}
    }%
    \wd\bkgdbox=0bp%
    \vbox{\hbox to \hsize{\box\bkgdbox\box\contentbox}}%
    \vskip\baselineskip%
}
\date{}
\newcommand{\tbf}{\textbf}
\newcommand{\mc}{\mathcal}
\newcommand{\mds}{\mathds}
\newcommand{\vrho}{\varrho}
\newcommand{\vphi}{\varphi}
\newcommand{\oline}{\overline}
\newcommand{\uline}{\underline}
\newcommand{\ra}{\rightarrow}
\newcommand{\g}{\gamma}
\renewcommand{\t}{\tau}
\newcommand{\N}{\mathbb{N}}
\renewcommand{\div}{{\rm div}\,}
\def\div{{\rm div}\,}
\begin{document}


\title{Ergodic theory for energetically open compressible fluid flows}

\author{Francesco Fanelli}
\address[F. Fanelli]{Univ. Lyon, Universit\'e Claude Bernard Lyon 1, CNRS UMR 5208, Institut Camille Jordan, F-69622 Villeurbanne, France}
\email{fanelli@math.univ-lyon1.fr}
\thanks{The work of F.F. has been partially supported by the LABEX MILYON (ANR-10-LABX-0070) of Universit\'e de Lyon, within the program ``Investissement d'Avenir''
(ANR-11-IDEX-0007), by the projects BORDS (ANR-16-CE40-0027-01), SingFlows (ANR-18-CE40-0027) and CRISIS (ANR-20-CE40-0020-01), all operated by the French National Research Agency (ANR)}

\author{Eduard Feireisl}
\address[E.Feireisl]{Institute of Mathematics AS CR, \v{Z}itn\'a 25, 115 67 Praha 1, Czech Republic
\and Institute of Mathematics, TU Berlin, Strasse des 17.Juni, Berlin, Germany }
\email{feireisl@math.cas.cz} 
\thanks{The research of E.F. leading to these results has received funding from
the Czech Sciences Foundation (GA\v CR), Grant Agreement
18--05974S. The Institute of Mathematics of the Academy of Sciences of
the Czech Republic is supported by RVO:67985840.}

\author{Martina Hofmanov\'a}
\address[M. Hofmanov\'a]{Fakult\"at f\"ur Mathematik, Universit\"at Bielefeld, D-33501 Bielefeld, Germany}
\email{hofmanova@math.uni-bielefeld.de}
\thanks{M.H. gratefully acknowledges the financial support by the German Science Foundation DFG via the Collaborative Research Center SFB 1283. This project has received funding from the European Research Council (ERC) under the European Union's Horizon 2020 research and innovation programme (grant agreement No. 949981). }

\begin{abstract}

The ergodic hypothesis is examined for energetically open fluid systems represented by the barotropic Navier--Stokes equations with general inflow/outflow boundary conditions.
We show that any globally bounded trajectory generates a stationary statistical solution, 
which is interpreted as a stochastic process with continuous trajectories supported by the family of weak solutions
of the problem. The abstract Birkhoff--Khinchin theorem is applied to obtain convergence (in expectation and a.s.) of ergodic averages for any bounded Borel measurable function of state variables associated to any stationary solution.
Finally, we show that validity of the ergodic hypothesis is determined by the behavior of entire solutions
(i.e. a solution defined for any $t\in R$). In particular, the ergodic averages converge for \emph{any} trajectory 
provided its $\omega-$limit set in the trajectory space supports a unique (in law) stationary solution. 

\end{abstract}

\keywords{barotropic Navier--Stokes system, ergodic theory, inflow/outflow boundary conditions, 
stationary statistical  solution}

\date{\today}

\maketitle

\tableofcontents


\section{Introduction}
\label{i}

One of the central open problems in statistical theories of hydrodynamic turbulence is the validity of the so-called \emph{ergodic hypothesis},
typically taken for granted by physicists and engineers, see e.g. \cite[Section 2.1]{Bat_Turb}. Roughly speaking, the ergodic hypothesis states the following:

\begin{quotation}
\emph{Time averages along trajectories of the flow converge, for large enough times, to an ensemble average given by a certain probability measure.}
\end{quotation}
Such a measure, if it exists, can be shown to be invariant for the dynamics. In other words, the ergodic hypothesis postulates that, over the long haul,
the system approaches its statistical equilibrium and the statistics of fully developed turbulence can be  
completely described by means of a single invariant measure. More detailed information on the mathematical theory of fluid turbulence 
can be found in the survey paper by Constantin \cite{Const1}, Bercovici et al. \cite{BeCoFoMa}, Constantin and 
Procaccia \cite{CoPro}, Foia\c{s} et al. \cite{FMRT}, and in the references cited therein. 

Another fundamental principle is the Second law of thermodynamics stated in its most flagrant form by Clausius: 

\begin{quotation}

\emph{
The energy of the world is constant; its entropy tends to a maximum.}

\end{quotation}

\noindent
Translating ``world'' as \emph{energetically closed system} we conclude that its dynamics must approach the state with 
maximal entropy that is necessarily an \emph{equilibrium}. Note that this is perfectly compatible with the ergodic hypothesis 
with the associated measure the Dirac mass sitting on the equilibrium state. Accordingly, the genuine turbulence can persist in the long 
run only for \emph{energetically open} systems, where the coercive effect of dissipation is counterbalanced by the action 
of external forces. Indeed, for \emph{energetically closed} systems of real (dissipative) fluids, the entropy (energy) plays a role of a Lyapunov 
function and the dynamics converge towards an equilibrium as predicted by the Second law, see e.g. the discussion in \cite{FP20}. To avoid such a boring scenario, a non--conservative \emph{volume force} is usually introduced in the literature, either deterministic or even of stochastic type 
in the momentum equation. Although this is convenient from the mathematics point of view, the natural volume force acting in the real world applications is a potential (conservative) gravity field. A truly open real fluid must interact with the outer world through 
the \emph{boundary} of the physical space.

Transforming the physically obvious statements in the language of rigorous mathematical theorems is hampered by the fact that the associated mathematical models of fluids are still relatively poorly understood. The iconic example is the incompressible 
Navier--Stokes system in the relevant 3D geometry, for which the fundamental questions of uniqueness and even existence of (smooth) 
solutions are largely open, see Fefferman \cite{Feff}. Still the undeniable success of the model in numerical simulations of the real world problems indicates its sound physical background. Note that well--posedness of the model in the mathematical sense is not 
fundamental when addressing the properties of the system in the long run. The only necessary piece of information is the 
existence of \emph{global--in--time solutions} together with a kind of \emph{energy/entropy balance equation} encoding the Second law.   
Both are available in the framework of weak solutions used in the present paper.  

Our goal is to investigate the ergodic structure and validity of the ergodic hypothesis in the context of \emph{energetically open fluid systems} modeled by the barotropic Navier--Stokes equations with general outflow/inflow boundary conditions.
To the best of our knowledge, this is the first work addressing this question in such a physically relevant situation.
The system 
describes the time evolution of the mass density $\vr = \vr(t,x)$ and the bulk velocity $\vu = \vu(t,x)$ of a general compressible 
viscous fluid confined to a physical domain $Q \subset R^3$: 
\begin{equation} \label{i1}
\begin{split}
\partial_t \vr + \Div (\vr \vu) &= 0,\\
\partial_t (\vr \vu) + \Div (\vr \vu \otimes \vu) + \Grad p(\vr) &= 
\Div \mathbb{S} + \vr \vc{g} ,\ (t,x) \in (0,T) \times Q,
\end{split}
\end{equation}
where $\mathbb{S}$ is the viscous stress tensor given by \emph{Newton's rheological law}
\begin{equation} \label{i2}
\mathbb{S}(\Ds \vu) = \mu \left( \Grad \vu + \Grad^t \vu - \frac{2}{3} \Div \vu \mathbb{I} \right) + 
\lambda \Div \vu \mathbb{I},\ \Ds \vu \equiv \frac{1}{2} \Big( \Grad \vu + \Grad^t \vu \Big),
\end{equation}
and $\vc{g}$ a driving force. The system communicates with the outer world through the inflow/outflow boundary conditions:  
\begin{align}
\vu|_{\partial Q} = \vu_b,& \ \vu_b = \vu_b(x), \ x \in \partial Q,  \label{i3} \\
\vr|_{\Gamma_{\rm in}} = \vr_b, \ \vr_b = \vr_b(x), &\ x \in \Gamma_{\rm in}, \ 
\Gamma_{\rm in} \equiv \left\{ x \in \partial Q \ \Big|\ \vu_b \cdot \vc{n} < 0 \right\}. \label{i4}
\end{align}
Here and in the following, we assume that $\vu_b\in C^1(\Ov{Q};R^3)$ is a function defined on the whole spatial domain $\Ov{Q}$.

The system \eqref{i1} can be seen as an evolutionary problem in terms of the \emph{conservative variables} $[\vr, \vm]$, where 
$\vm = \vr \vu$ is the linear momentum. Unlike its incompressible counterpart, the barotropic Navier--Stokes system 
captures all essential features of problems in continuum fluid mechanics, notably the ``hyperbolic'' character of the mass 
transport combined with the ``parabolic'' structure of the momentum equation. 
 
\medbreak

As the problem of well--posedness (in terms of the initial data) of \eqref{i1}--\eqref{i4} is still largely open,
we put forward a different point of view on the dynamical properties, which
goes back to Sell \cite{SEL} and M\'alek, Ne\v cas \cite{MANE} in the context of the attractor theory, and It\^o, Nisio \cite{Itno} in the framework of SDE. The leading idea is to consider 
the whole trajectory as the ``initial datum'', whereas the dynamics is represented by simple time shifts: 
\[
S_\tau [\vr, \vm] (t, \cdot) \equiv [\vr(t + \tau, \cdot), \ \vm(t + \tau, \cdot)], \tau \geq 0 \ (\tau \in R).
\]
Note that $\tau < 0$ is relevant for \emph{entire solutions} defined for 
all $t \in R$.

In fact, a similar strategy has recently been implemented by Foia\c{s}, Rosa and Temam \cite{FoRoTe2, FoRoTe1} in studying convergence of time averages for
the incompressible Navier--Stokes system driven by a non--conservative volume force. 
The investigation of both those works strongly relied on the notion of \emph{stationary statistical solution}, which is closely related to the concept of
\emph{invariant measure} for well--posed problems.

\emph{Statistical solutions} to problems in fluid mechanics have been introduced in the pioneering works of Foia\c{s} \cite{Foias} and 
Vishik, Fursikov \cite{VisFur} again in the context of incompressible fluid models. Roughly speaking, they are 
measures supported by the solution trajectories of the underlying system of PDE's. See \cite{ConWu} and \cite{FoRoTe3} for related studies and additional details.
We also refer to Jiang and Zhao \cite{Jiang-Zhao}, Zhao et al. \cite{ZCLuk, ZLiC, ZLiLuk} for recent studies based on the notion of statistical solutions for different systems of PDEs.
When the measure {is invariant with respect to (positive) time shifts}, the statistical solution is said stationary.

More recently, a new class of (dynamical) statistical solutions 
for the Navier--Stokes system \eqref{i1}--\eqref{i4} was introduced in \cite{FanFei}. The statistical solutions are {identified with the pushforward measure} associated to a semiflow selection among all possible (weak) solutions of the problem emanating from fixed initial data. {Inspired by \cite{FanFei}, 
we advocate the alternative interpretation of a statistical solution} as a stochastic process with continuous paths {supported by solutions of the problem.}
From this perspective, statistical solutions to \emph{deterministic} problems can be seen as a special case of solutions of the related stochastic PDE
(SPDE in brief) with a stochastic forcing term and random data, where the stochastic forcing vanishes.
Relevant global--in--time existence results for statistical solutions can be therefore deduced from the SPDE theory, see e.g. Flandoli 
\cite{Fland} for the incompressible case, and \cite{BrFeHobook, BrHo} for the compressible case (note, however, that those results concern energetically closed systems only).

In accordance with the above delineated general strategy, 
a \emph{stationary statistical solution} is simply a \emph{stationary} stochastic process supported by global--in--time (weak) solutions of the problem. More precisely, the process is supported by entire solutions (defined for all $t \in R$) and invariant with respect to the action of the time shift operators $S_\t$ defined above.
The \emph{probability law} of this process in the trajectory space is the stationary statistical solution in the sense of Vishik and Fursikov \cite{VisFur} (see also \cite{FoRoTe1}). The standard Krylov--Bogolyubov theory gives rise to a particular class of 
stationary solutions generated by ``ergodic envelopes'' of solutions $[\vr,\vm]$ with globally bounded energy. The construction 
relies on tightness of 
the family of probability measures
\begin{equation} \label{ERGOD}
\nu_T \equiv \frac{1}{T} \int_0^T \delta_{S_\tau [\vr, \vm]} \D \tau
\end{equation} 
and continuity of the shift operators in the trajectory space. Any accumulation point $\nu$ for $T_n \to \infty$ of 
$\nu_{T_n}$ in the space of measures (sitting on the trajectory space) represents a law of a stationary statistical solution.
It can be shown that any such measure (stationary solution) is supported on the $\omega$--limit set
$\omega[\vr, \vm]$
of $[\vr, \vm]$ in the 
space of trajectories. As the $\omega$--limit set can be interpreted as the largest region in the phase space 
visited by the solution $[\vr, \vm]$ 
in the long run, the ergodic hypothesis can be reformulated in the statement: 

\begin{quotation}

\emph{The measure $\nu$ is ergodic with respect to the shift transformation on $\omega[\vr, \vm]$.}

\end{quotation}

\noindent Here, \emph{ergodic} means that for any shift invariant Borel subset $B$ of $\omega[\vr, \vm]$ either 
$\nu (B) = 1$ or $\nu(B) = 0$. Although the above definition is mathematically acceptable, its formulation is a bit awkward 
referring to the concept of trajectory space, time shifts, $\omega$--limit set etc. We therefore adopt a weaker form.
To this end, we need to specify the \emph{phase space} associated to the Navier--Stokes system, meaning the function space $H$ in which the solution 
$[\vr, \vm]$ lives at any time instant, 
\[
[\vr(t, \cdot), \vm(t, \cdot)] \in H \ \mbox{for any}\ t .
\]
As the \emph{energy} associated to the Navier--Stokes system reads 
\begin{equation*}
\begin{split}
E \left(\vr, \vm \ \Big| \vu_b \right) \equiv \left[ \frac{1}{2} \vr |\vu - \vu_b|^2 + P(\vr) \right] &= 
\left[ \frac{1}{2} \frac{|\vm|^2}{\vr} - \vm \cdot \vu_b + \frac{1}{2} \vr |\vu_b|^2 + P(\vr) \right],  \\
\vm \equiv \vr \vu,\ \ P'(\vr) \vr  - P(\vr) &= p(\vr),
\end{split}
\end{equation*} 
a natural choice is to identify the phase space $H$ through suitable ``energy'' norm. However, in order to encode fundamental compactness properties of the solutions, throughout this paper we set
\[
H = L^1(Q) \times W^{-k,2}(Q; R^3).
\] 
We can now state our working definition of (pointwise) ergodic hypothesis for the Navier--Stokes system \eqref{i1}--\eqref{i4}:

\begin{quotation}

\emph{Navier--Stokes system \eqref{i1}--\eqref{i4} complies with the ergodic hypothesis if for any solution $[\vr, \vm]$ defined 
on a time interval $(\tau, \infty)$, $\tau \geq - \infty$, the limit} 
\begin{equation} \label{I5a}
\lim_{T \to \infty} \frac{1}{T} \int_0^T F(\vr(t, \cdot), \vm(t, \cdot) ) \ \dt 
\end{equation}
\emph{exists for any bounded continuous functional $F$ on $H$.}

\end{quotation}

Loosely speaking, we may contrast the ergodic hypothesis for energetically open and closed system as follows: 
\begin{itemize}

\item {\bf Closed system.} 

For any global--in--time solution $[\vr, \vm]$, 
the total energy $\intO{ E(\vr, \vm | \vu_b ) }$ is a Lyapunov function, and 
\[
[\vr, \vm](t, \cdot) \to [\tvr , \tvm]  \ \mbox{as}\ t \to \infty,   
\]
where $[\tvr , \tvm]$ is an equilibrium state.

\item {\bf Open system.}

For any global--in--time solution $[\vr, \vm]$,
the total energy $\intO{ E(\vr, \vm | \vu_b ) }$ remains bounded as $t \to \infty$, and 
\[
\frac{1}{T} \int_0^T \delta_{[\vr, \vm](t, \cdot)} \ \dt \to \mathcal{\mathcal{V}} \ \mbox{as} \ T \to \infty,\ 
\mbox{where}\ \mathcal{V}  \ \mbox{is a probability measure on}\ H .
\]

\end{itemize}

\medbreak

Our strategy can be delineated as follows. Having identified a suitable phase space $H$, the trajectory space 
$\mathcal{T}$ is simply defined as the space $\mathcal{T} = C(R;H)$ of continuous functions ranging in $H$. Note that 
solutions defined on $(\tau, \infty)$, $\tau$ finite, can be identified with trajectories in $\mathcal{T}$ by extending them 
to be constant for $t \leq \tau$. Adopting this convention, we may define the $\omega$--limit set associated to 
a solution $[\vr, \vm]$ as 
\[
\omega [\vr, \vm] = \left\{ [r, \vc{w}] \in \mathcal{T} \ \Big|\ [\vr, \vm] (\cdot + T_n, \cdot) \to [r, \vc{w}] 
\ \mbox{in}\ \mathcal{T} \ \mbox{for some}\ T_n \to \infty \right\}.
\]
Next, we identify the class of solutions relevant for the long--time dynamics, 
\[
\mc U(\oline E) = \left\{ [\vr, \vm] \ \Big| \ [\vr,\vm] 
- \mbox{entire solution},\ \intO{ E\left(\vr, \vm \ \Big| \ \vu_b \right) (t, \cdot) } \leq \Ov{E} \ \mbox{for all}\ t \in R \right\} .
\]

Our first crucial observation is contained in the following result (see Theorem~\ref{aP1} for the precise statement).

\begin{Theorem}\label{thm:In1}
Let $[\vr, \vm]$ be a global--in--time solution of the Navier--Stokes system such that 
\[
\limsup_{t \to \infty} \intO{ E\left(\vr, \vm \ \Big| \ \vu_b \right) (t, \cdot) } \leq \Ov{E}.
\]

Then 
\begin{itemize}
\item 
$
\emptyset \ne \omega[\vr, \vm] \subset \mc U(\oline E)
$;
\item 
the set $\mc U(\oline E)$ is compact in $\mathcal{T}$.

\end{itemize}
\end{Theorem}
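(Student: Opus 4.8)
The plan is to reduce both assertions to two ingredients: the a priori bounds furnished by the energy $E(\vr,\vm\,|\,\vu_b)$, and the weak sequential stability of the barotropic Navier--Stokes system, i.e. the fact that a limit of solutions with uniformly bounded energy is again a solution. The specific choice $H = L^1(Q) \times W^{-k,2}(Q;R^3)$ is precisely what makes these bounds interact correctly with the topology of $\mathcal{T} = C(R;H)$, so I would keep track throughout of which component needs strong and which needs only weak-type convergence.

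First I would record the a priori bounds. A bound $\intO{E(\vr,\vm\,|\,\vu_b)} \le \Ov{E}$ controls $P(\vr)$ in $L^1(Q)$, hence (for a pressure with the usual $\gamma$-growth) $\vr$ in $L^\gamma(Q)$, and the kinetic part $\vr|\vu-\vu_b|^2$ in $L^1(Q)$; since $\vu_b \in C^1(\Ov Q)$ is bounded, writing $|\vm| = \sqrt{\vr}\,\sqrt{\vr}\,|\vu|$ and applying H\"older yields $\vm$ bounded in $L^{2\gamma/(\gamma+1)}(Q;R^3)$. Both $L^\gamma(Q)$ and $L^{2\gamma/(\gamma+1)}(Q;R^3)$ embed compactly into $W^{-k,2}(Q)$ once $k$ is large enough. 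Next, reading the continuity and momentum equations as identities expressing $\partial_t \vr$ and $\partial_t \vm$ as spatial derivatives of quantities bounded in $L^1_t$ or $L^2_t$ with values in $W^{-k,2}(Q)$ (the dissipation term being controlled by integrating the energy inequality over unit time intervals, which keeps $\S(\Ds\vu)$ uniformly bounded in $L^2_{\rm loc}$), one obtains a uniform modulus of continuity of $t \mapsto [\vr,\vm](t,\cdot)$ in the $W^{-k,2}$-topology. Arzel\`a--Ascoli on each compact time interval combined with a diagonal extraction over $R$ then provides, for any sequence of entire solutions in $\mc U(\Ov E)$, a subsequence converging in $C_{\rm loc}(R; W^{-k,2}\times W^{-k,2})$; this already settles the momentum component in $\mathcal{T}$.

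The genuinely delicate point, and the one I expect to be the main obstacle, is to upgrade the convergence of the density from $W^{-k,2}$ to the \emph{strong} $L^1(Q)$ topology required by $H$: the energy bound alone gives only weak $L^\gamma$-compactness and uniform integrability of $\{\vr_n\}$, never strong $L^1$-compactness, and this already fails at a fixed time instant. Here I would invoke the weak sequential stability of the compressible system --- the effective-viscous-flux identity and the renormalized continuity equation in the spirit of Lions and Feireisl --- to conclude $\vr_n \to \vr$ strongly in $L^1$ on space--time, and then combine this with the time-equicontinuity and uniform integrability to promote it to convergence in $C_{\rm loc}(R;L^1(Q))$. The same stability argument lets one pass to the limit in the weak formulation of \eqref{i1}--\eqref{i4} (including the inflow/outflow boundary terms, which is the extra technical burden here compared to the closed case), so the limit $[\vr,\vm]$ is again an entire solution; the joint convexity and lower semicontinuity of $(\vr,\vm)\mapsto E(\vr,\vm\,|\,\vu_b)$ under this mode of convergence gives $\intO{E(\vr,\vm\,|\,\vu_b)(t,\cdot)} \le \Ov E$ for every $t\in R$. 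Thus $\mc U(\Ov E)$ is sequentially compact, hence compact since $\mathcal{T}$ is metrizable.

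Finally, for the statement on $\omega[\vr,\vm]$, I would apply exactly the same extraction to the time shifts $[\vr,\vm](\cdot + T_n, \cdot)$. By hypothesis $\limsup_{t\to\infty}\intO{E(\vr,\vm\,|\,\vu_b)(t,\cdot)} \le \Ov E$, so for $T_n \to \infty$ these shifts satisfy the energy bound $\Ov E + \delta_n$ with $\delta_n \to 0$ on every compact time interval; the compactness argument produces a subsequence converging in $\mathcal{T}$ to some limit, which belongs to $\omega[\vr,\vm]$ by definition, giving $\omega[\vr,\vm]\neq\emptyset$. The very same reasoning shows that this limit is an entire solution and, by lower semicontinuity of the energy together with $\delta_n\to 0$, satisfies the bound $\Ov E$ for all $t\in R$, whence $\omega[\vr,\vm]\subset \mc U(\Ov E)$.
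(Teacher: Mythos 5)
Your overall architecture (a priori bounds, Arzel\`a--Ascoli in $W^{-k,2}$, weak sequential stability to identify the limit as a solution, lower semicontinuity of the energy) matches the paper's, and you correctly locate the delicate point in the strong $L^1(Q)$ compactness of the density. However, there is a genuine gap precisely there: you propose to resolve it by ``the effective-viscous-flux identity and the renormalized continuity equation in the spirit of Lions and Feireisl,'' i.e.\ by the standard weak sequential stability argument from the existence theory. That argument does not close here. In the existence proof, the effective viscous flux identity yields a differential inequality for the oscillation defect $\intO{ \Ov{\vr \log \vr} - \vr \log \vr }$, which one then propagates forward from $t=0$ using the \emph{strong convergence of the initial densities} $\vr_n(0,\cdot)$. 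For the time shifts $[\vr,\vm](\cdot+T_n,\cdot)$ with $T_n\to\infty$, or for a sequence of entire solutions in $\mc U[\Ov E]$, there is no time at which strong compactness of the densities is known, so the defect cannot be shown to vanish by this route. The paper flags exactly this obstruction and replaces the missing initial-data compactness by a \emph{qualitative temporal decay} of the defect: after extending the renormalized identity to spatially homogeneous test functions (which requires a careful sign analysis of the boundary contribution on $\Gamma_{\rm out}$ using convexity of $L_k$ and the geometry of the closest-point projection), one obtains
\[
\frac{\D}{\dt} \intO{ \left[ \Ov{\vr\log\vr} - \vr\log\vr \right] } + \Psi\left( \intO{ \left[ \Ov{\vr\log\vr} - \vr\log\vr \right] } \right) \le 0 \quad \mbox{in } \mathcal{D}'(R),
\]
with $\Psi(Z)Z>0$ for $Z\ne 0$. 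Since the defect is non-negative and uniformly bounded \emph{on all of} $R$ (this is where being an entire, or asymptotically entire, solution is essential), it must vanish identically. This is the key new idea of the theorem and it is absent from your proposal.

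A secondary, smaller omission: even after a.e.\ convergence of $\{\vr_n\}$ is secured, upgrading to convergence in $C_{\rm loc}(R;L^1(Q))$ (as opposed to $L^q_{\rm loc}$ in space--time) is not automatic from ``time-equicontinuity and uniform integrability'' as you assert; the paper devotes a separate step to it, renormalizing with $T_k(\vr_n)$ and $T_k^2(\vr_n)$ and exploiting the strong time continuity of the limit density (Proposition \ref{wP1}) to approximate $t\mapsto T_k(\vr)(t,\cdot)$ by piecewise constant curves. You should either supply such an argument or at least acknowledge that the passage from a.e.\ convergence to uniform-in-time $L^1$ convergence requires one.
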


The above result can be rephrased in terms of \emph{sequential compactness} of the time shifts of the solution 
$[\vr, \vm]$, where actually the choice of the 
space $H$ plays a marginal role. The crucial observation is that the bounds provided by the energy are sufficient to show that functions 
in the $\omega$--limit set are again \emph{solutions} of the nonlinear problem \eqref{i1}--\eqref{i4}. This property is quite 
standard in the class of purely ``parabolic'' problems, 
in particular the \emph{incompressible} Navier--Stokes system, 
where compactness is guaranteed by the presence of diffusion terms, and the problem boils down to a simple modification of the arguments of the \emph{existence} theory. As a matter of fact, the argument of compactness 
in the existence theory for the compressible Navier--Stokes system is rather delicate because of the density. Indeed the latter satisfies only the equation of continuity $($\ref{i1}$)_1$ that allows for propagation of oscillations. Compactness relies 
on an ingenious argument of Lions \cite{LI4} that combines: 
\begin{itemize}
\item compactness (strong convergence) of the sequence $\vr_n(0, \cdot)$ of the initial data; 
\item ``weak continuity'' of the effective viscous flux.  
\end{itemize}
As the information on compactness of the ``initial data'' is apparently missing for the time shifts of global--in--time solutions, 
a more sophisticated argument must be used based on qualitative estimates of the temporal decay rate of density oscillations.

Another interesting problem, largely open in the case of energetically open systems, is the existence 
of a bounded entire solution, namely if $\mathcal{U}(\Ov{E}) \ne \emptyset$ for some large $\Ov{E}$.
Some sufficient conditions guaranteeing this property are given in Section \ref{ss:existence}.
We refer also to \cite{Bre-F-Nov}, where existence of globally bounded trajectory is shown for the barotropic Navier-Stokes system \eqref{i1}--\eqref{i4}
supplemented with the so-called hard-sphere pressure equation of state.

Theorem \ref{thm:In1} is a cornerstone of our investigation, since it opens the road to all the subsequent analysis.
First of all, the existence of an entire solution with globally bounded energy, namely the fact that the set $\mc U(\oline E)$ is non--empty (for some $\oline E>0$),
is enough to guarantee the existence of a stationary statistical solution (see Theorem \ref{IT1} below for details).

\begin{Theorem}\label{thm:In2}
Let $[\vr, \vm]$ be solution with globally bounded energy. Then there exists a stationary statistical solution supported 
by the $\omega-$limit set $\omega[\vr, \vm] \subset \mathcal{U}(\Ov{E})$.
\end{Theorem}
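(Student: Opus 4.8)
The plan is to realize the stationary statistical solution as an accumulation point of the ergodic averages $\nu_T$ defined in \eqref{ERGOD}, following the Krylov--Bogolyubov paradigm. Starting from a global-in-time solution $[\vr,\vm]$ with globally bounded energy, Theorem~\ref{thm:In1} furnishes the two structural ingredients we need: the $\omega$-limit set $\omega[\vr,\vm]$ is nonempty and contained in the \emph{compact} set $\mc U(\oline E) \subset \mathcal{T}$. First I would observe that, by the energy bound, the entire family of time shifts $\{S_\tau[\vr,\vm] : \tau \geq \tau_0\}$ eventually lands (up to the harmless extension convention) in a fixed compact subset of $\mathcal{T}$; more precisely, the closure of the forward orbit is compact once the energy has settled below $\oline E + 1$. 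Consequently each measure $\nu_T$ is supported in a common compact set, so the family $\{\nu_{T}\}_{T\geq 1}$ is automatically \emph{tight} as a family of Borel probability measures on the Polish space $\mathcal{T} = C(R;H)$.

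Tightness together with Prokhorov's theorem yields a sequence $T_n \to \infty$ along which $\nu_{T_n} \to \nu$ narrowly, for some Borel probability measure $\nu$ on $\mathcal{T}$. The next step is to verify that $\nu$ is \emph{shift-invariant}, i.e. $(S_\t)_\# \nu = \nu$ for every $\t \in R$. This is the classical Krylov--Bogolyubov computation: for any bounded continuous $\Phi$ on $\mathcal{T}$,
\begin{equation*}
\int \Phi \circ S_\t \, \dd\nu_T - \int \Phi \, \dd\nu_T
= \frac{1}{T}\int_0^T \Big( \Phi(S_{\tau+\t}[\vr,\vm]) - \Phi(S_\tau[\vr,\vm]) \Big)\, \D\tau,
\end{equation*}
and the right-hand side telescopes into boundary contributions over intervals of length $|\t|$, hence is $O(\|\Phi\|_\infty\, |\t|/T) \to 0$. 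Passing to the limit along $T_n$ and using the continuity of the shift operators $S_\t$ on $\mathcal{T}$ (so that $\Phi \circ S_\t$ is again bounded continuous) gives $\int \Phi \circ S_\t \, \dd\nu = \int \Phi\,\dd\nu$, which is exactly invariance.

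It then remains to confirm that $\nu$ is genuinely a \emph{statistical solution}, namely that it is supported by entire solutions of \eqref{i1}--\eqref{i4}, and that its support sits inside $\omega[\vr,\vm]$. For the support claim, any point in $\supp\nu$ is a narrow-limit accumulation point of the measures $\nu_{T_n}$, whose mass concentrates on the tail of the orbit; since $\mc U(\oline E)$ is closed, $\nu$ is carried by $\mc U(\oline E)$, and a standard argument identifies $\supp\nu \subset \omega[\vr,\vm]$. That every $\nu$-typical trajectory is an entire solution is precisely the content of the inclusion $\mc U(\oline E) \subset \{\text{entire solutions}\}$ built into the definition of $\mc U(\oline E)$ and guaranteed by Theorem~\ref{thm:In1}. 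I expect the main obstacle to lie not in the soft functional-analytic skeleton above, which is robust, but in the measurability and stability issues hidden in the claim that the weak-solution property is \emph{preserved under narrow limits of the empirical measures} $\nu_{T_n}$: one must ensure that the set of weak solutions is a Borel (indeed closed) subset of $\mathcal{T}$, so that $\nu_{T_n}$ charges it with full mass and the limit $\nu$ does too. This is where the delicate compactness of the density oscillations flagged in the discussion of Theorem~\ref{thm:In1} re-enters, since closedness of the solution set in $\mathcal{T}$ is exactly what that nontrivial argument secures.
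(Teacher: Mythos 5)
Your proposal is correct and rests on the same Krylov--Bogolyubov skeleton as the paper (tightness via compactness, Prokhorov, the telescoping computation for shift invariance, portmanteau for the support, and the asymptotic compactness of Theorem~\ref{aP1} to guarantee that the limit measure sits on genuine entire solutions). The one genuine difference is \emph{where} you run the averaging. You take the empirical measures $\nu_T$ of \eqref{ERGOD} along the original trajectory $[\vr,\vm]$, which forces you to prove separately that the narrow limit concentrates on $\omega[\vr,\vm]$ --- the step you dismiss as ``a standard argument''. It is standard (the paper carries it out in the proof of Theorem~\ref{thm:8.1}: a point of $\supp\nu$ has neighborhoods of positive $\nu$-mass, hence of positive $\liminf\nu_{T_n}$-mass, hence visited by the orbit at arbitrarily large times), but it is a real step. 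The paper's proof of Theorem~\ref{IT1} instead first passes to $\mathcal{A}=\omega[\vr,\vm]$, which by Corollary~\ref{TP2} is nonempty, compact and shift invariant, picks an arbitrary point $(\tvr,\tvm)\in\mathcal{A}$, and runs Krylov--Bogolyubov from \emph{there}; since $\mathcal{A}$ is shift invariant, every $\nu_T$ is already supported in the compact set $\Ov{\mathcal{A}}$, so tightness and the inclusion $\supp\nu\subset\Ov{\mathcal{A}}$ are immediate from portmanteau, and Lemma~\ref{lem:ID1} converts the measure into a statistical solution. Your route buys a measure canonically attached to the given trajectory (which is exactly what Theorem~\ref{thm:8.1} needs later); the paper's route buys a shorter proof and works for any shift invariant $\mathcal{A}\subset\mathcal{U}[\Ov{E}]$, not only $\omega$-limit sets.

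One small correction to your closing paragraph: the mechanism is not that ``$\nu_{T_n}$ charges the set of weak solutions with full mass and the limit does too''. The measures $\nu_{T_n}$ are supported on finite time shifts of $[\vr,\vm]$, i.e.\ on trajectories extended by a constant for $t\leq -\tau$, and these extensions are \emph{not} entire solutions; so the solution set being closed would not by itself transfer full mass to the limit. What actually does the job is the concentration of $\nu$ on $\omega[\vr,\vm]\subset\mathcal{U}[\Ov{E}]$, whose elements are entire solutions by Theorem~\ref{aP1} --- which is the correct statement you make earlier in the same paragraph. With that reading, your argument closes.
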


The proof of Theorem \ref{thm:In2} is performed via the standard Krylov--Bogolyubov's argument 
based on tightness of the probability measures \eqref{ERGOD} defined on the {trajectory space} $\mathcal{T}$. 
Any limit point $\nu$ (in the narrow topology) for a suitable $T_n \to \infty$ in \eqref{ERGOD} 
represents the desired law of a stationary statistical  solution.
Note that this procedure allows us to avoid the use of generalized limits, employed in \cite{FoRoTe1}, and to work directly with classical limits.

Moreover, it turns out that, with Theorem~\ref{thm:In1} at hand, the theory of dynamical systems becomes accessible, even in the comfortable setting of dynamical systems with compact state space.
Specifically, Theorems \ref{thm:In1} and \ref{thm:In2} together permit us to reformulate the problem as a measure--preserving dynamical system generated by the action of time shifts on trajectories.
The huge machinery of dynamical systems thus applies (in a quite direct way) to our setting. In particular, 
we can show that the collection of all $\omega-$limit sets supports all invariant measures. In addition, as a by--product of Theorem~\ref{thm:In2}, we show the existence of a recurrent solution belonging
to its own $\omega-$limit set (see Corollary~\ref{cor:rec} in this respect).

In the context outlined above, a remarkable feature of stationary statistical solutions, expressed in terms of the celebrated Birkhoff--Khinchin theorem,
is the convergence of the ergodic averages (we refer to Theorem \ref{AT1} and its extension Theorem~\ref{ACC1} for the precise statements).

\begin{Theorem}\label{thm:IN4}
Let $[\vr,\vm]$ be a stationary statistical solution to the barotropic Navier--Stokes system, and let $F$ be  a bounded 
{Borel}
functional on the state space $H$. Then the limit
\begin{equation} \label{i71}
\lim_{T\to\infty}\frac{1}{T} \int_0^T F (\vr(t, \cdot), \vm(t, \cdot) ) \dt 
\end{equation}
exists a.s.
\end{Theorem}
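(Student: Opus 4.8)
The plan is to recast the time average in \eqref{i71} as the Birkhoff average of a fixed observable along the measure--preserving shift flow and then invoke the abstract Birkhoff--Khinchin theorem. Let $\nu$ denote the law on the trajectory space $\mathcal{T} = C(R;H)$ of the stationary statistical solution $[\vr,\vm]$. By the very definition of a stationary statistical solution, $\nu$ is invariant under the action of the shift semigroup $(S_\tau)_{\tau\ge 0}$; consequently $\big(\mathcal{T},\mathcal{B}(\mathcal{T}),\nu,(S_\tau)_{\tau\ge 0}\big)$ is a measure--preserving dynamical system, which is precisely the setting in which the ergodic theorem operates.

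First I would lift the functional $F$ from the phase space $H$ to the trajectory space $\mathcal{T}$. Introduce the evaluation map $\pi_0:\mathcal{T}\to H$, $\pi_0[r,\vc{w}] = [r(0,\cdot),\vc{w}(0,\cdot)]$, which is continuous for the topology of locally uniform convergence on $C(R;H)$, and set $\Phi := F\circ \pi_0$. Since $F$ is bounded and Borel while $\pi_0$ is continuous, the functional $\Phi$ is bounded and Borel on $\mathcal{T}$, whence $\Phi\in L^1(\nu)\cap L^\infty(\nu)$. The key identity is that, by definition of the shift, $\big(S_t[\vr,\vm]\big)(0,\cdot) = [\vr(t,\cdot),\vm(t,\cdot)]$, so that
\[
\frac{1}{T}\int_0^T F(\vr(t,\cdot),\vm(t,\cdot))\dt \;=\; \frac{1}{T}\int_0^T \Phi\big(S_t[\vr,\vm]\big)\dt ,
\]
i.e.\ the quantity in \eqref{i71} is exactly the ergodic average of $\Phi$ along the trajectory of the flow.

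With this reformulation the conclusion follows from the continuous--time Birkhoff--Khinchin theorem applied to $\Phi\in L^1(\nu)$. If I wanted to reduce the continuous--time statement to the classical discrete one, I would set $g[r,\vc{w}] := \int_0^1 \Phi\big(S_t[r,\vc{w}]\big)\dt$, which belongs to $L^1(\nu)$ by boundedness of $\Phi$ and Fubini, observe the additivity identity $\frac{1}{N}\int_0^N \Phi(S_t[\vr,\vm])\dt = \frac{1}{N}\sum_{k=0}^{N-1} g\big(S_1^k[\vr,\vm]\big)$ (valid since $S_1^k=S_k$), apply the discrete Birkhoff theorem to the measure--preserving map $S_1$ to get a.s.\ convergence along integers, and finally pass from $N=\lfloor T\rfloor$ to arbitrary $T$ using $\|\Phi\|_\infty<\infty$ to control the boundary remainder, which tends to $0$. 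Either way one obtains the a.s.\ existence of the limit in \eqref{i71}.

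The genuinely hard analytic input is not in this argument but is already discharged by Theorems \ref{thm:In1} and \ref{thm:In2}: the existence of shift--invariant probability measures $\nu$ supported on entire solutions of globally bounded energy, together with the continuity of the shift flow on $\mathcal{T}$, is exactly what turns the problem into a bona fide measure--preserving system. Within the present proof the only point that requires care is the joint measurability of the map $(t,[\vr,\vm])\mapsto S_t[\vr,\vm]$, needed to make sense of $g$ and to apply Fubini; this follows from the continuity of the shift flow on $C(R;H)$. I expect no further obstacle, since the observable $\Phi$ is merely bounded Borel --- the weakest integrability required by Birkhoff--Khinchin --- and no continuity of $F$ is needed, the measurability of $\Phi$ being inherited from the continuity of the evaluation $\pi_0$.
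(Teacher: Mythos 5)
Your proof is correct and follows essentially the same route as the paper: the paper's Theorem \ref{AT1} also disposes of the a.s.\ convergence by a direct appeal to the Birkhoff--Khinchin theorem for the stationary process (in Kolmogorov's formulation), which is just the path-space/shift-invariant-measure picture you set up explicitly. Your fallback reduction to the discrete-time theorem via $g=\int_0^1\Phi(S_t\cdot)\,\mathrm{d}t$ and the passage from $\lfloor T\rfloor$ to $T$ is precisely the paper's Lemma \ref{ALL1}, used there for the $L^1$ extension in Theorem \ref{ACC1}.
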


At this point, it is tempting to immediately compare this result with the desired ergodic hypothesis. Nonetheless, two different issues show that it
does not give a satisfactory answer, yet.

On the one hand, the catch is in the fact that the limit exists only a.s.; in other words, Theorem~\ref{thm:IN4} entails 
{only trajectories, with the corresponding (initial) data, that belong to the support of the stationary solution (measure) 
$\nu$. Note that speaking about ``initial'' data in this context is irrelevant, as the choice of the ``initial'' time is irrelevant 
for the stationary solution. In addition, the set of the corresponding data can be rather small for certain stationary solutions. 
In particular, if the stationary solution is deterministically stationary, meaning the trajectory is independent of time, then 
the piece of information hidden in \eqref{i71} is trivial as the stationary measure reduces to a Dirac mass.
}

On the other hand, Theorem~\ref{thm:IN4} only gives a convergence to a random variable, and not to an \emph{ensemble average}. As it will be shown in Theorem~\ref{AC1}, the limit \eqref{i71} can be described by a \emph{conditional expectation} with respect to the $\sigma$--algebra of
shift invariant sets. Hence it becomes a true expectation exactly when this $\sigma$--algebra is trivial for the law of the stationary solution $[\vr,\vm]$, in other words when the law is \emph{ergodic}.
It turns out that, based on  Krein--Milman's theorem, we are able to prove existence of such ergodic stationary statistical solutions.
In addition, as a consequence of the so-called ergodic decomposition (see Theorem~\ref{thm:AC1}), we will show that the union of the supports of all invariant measures coincides with the union
of the supports of all ergodic invariant measures, and that different ergodic invariant measures must be singular.

Thus our conclusion for the validity of ergodic hypothesis can be summarized in the following statement.

\begin{Theorem}\label{thm:In5}

{Let $[\vr, \vm]$ be a global--in--time solution with uniformly bounded energy.}

Then the limit \eqref{I5a} exists if the 
$\omega$--limit set $\omega[\vr, \vm]$ supports a unique (ergodic) invariant  measure, i.e., a unique stationary statistical solution. 
Accordingly, the ergodic hypothesis, i.e., validity of \eqref{I5a} for any trajectory, holds, provided 
all trajectories admit a uniform energy bound, and 
all associated $\omega$--limit sets 
support a unique (ergodic) invariant measure.
\end{Theorem}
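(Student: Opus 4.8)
The plan is to work entirely in the trajectory--space / occupation--measure formalism and to reduce the convergence of the time averages in \eqref{I5a} to the narrow convergence of the empirical measures $\nu_T$ introduced in \eqref{ERGOD}. The essential point to keep in mind is that the given trajectory $[\vr,\vm]$ need \emph{not} be a typical point of any invariant measure --- it may well fail to lie in the support of the unique stationary solution --- so the Birkhoff--Khinchin Theorem~\ref{thm:IN4} cannot be applied to it directly. Instead, everything is driven by the observation that $\nu_T$ is exactly the occupation measure of the forward orbit and that, under the uniqueness hypothesis, it can have only one narrow accumulation point.

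First I would lift the observable to the trajectory space. Given a bounded continuous $F$ on $H$, set $\Phi([\vr,\vm]) := F(\vr(0,\cdot),\vm(0,\cdot))$. Since convergence in $\mathcal{T}=C(\R;H)$ forces convergence at $t=0$ in $H$, the functional $\Phi$ is bounded and continuous on $\mathcal{T}$, and the identity $F(\vr(t,\cdot),\vm(t,\cdot))=\Phi(S_t[\vr,\vm])$ gives
\[
\frac{1}{T}\int_0^T F(\vr(t,\cdot),\vm(t,\cdot)) \dt = \int_{\mathcal{T}}\Phi \,\dd\nu_T .
\]
Hence existence of the limit \eqref{I5a} is equivalent to convergence of $\int_{\mathcal{T}}\Phi \,\dd\nu_T$ as $T\to\infty$ for every such $\Phi$, which would follow at once from narrow convergence of $\nu_T$ to a single limit.

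Second, I would establish relative compactness and identify the accumulation points. By Theorem~\ref{thm:In1} the time shifts $\{S_\tau[\vr,\vm]:\tau\ge 0\}$ are sequentially precompact in $\mathcal{T}$, and $\omega[\vr,\vm]$ is a non--empty compact subset of $\mathcal{U}(\Ov{E})$; consequently the family $\{\nu_T\}_{T>0}$ is tight, hence narrowly relatively compact by Prokhorov. For any sequence $T_n\to\infty$ and any narrow limit $\nu$ of $\nu_{T_n}$, the standard Krylov--Bogolyubov computation (as in the proof of Theorem~\ref{thm:In2}) shows that $\nu$ is invariant under the shifts $S_\s$, i.e. a stationary statistical solution. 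Moreover, testing against the bounded continuous function $\min\{\dist(\cdot,\omega[\vr,\vm]),1\}$ and using that a precompact orbit eventually enters every neighbourhood of its $\omega$--limit set, one sees this test integral tends to $0$ along $\nu_{T_n}$ and therefore vanishes for $\nu$; hence $\nu$ is supported on $\omega[\vr,\vm]$. I expect the verification that \emph{every} accumulation point --- not merely one chosen along a privileged sequence --- is a stationary statistical solution carried by $\omega[\vr,\vm]$ to be the step requiring the most care, since it is precisely what lets the uniqueness hypothesis bite.

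Finally, I would invoke the hypothesis. Since $\omega[\vr,\vm]$ supports a unique invariant measure $\nu_\infty$ (which is automatically ergodic, uniqueness of the invariant measure forcing triviality of the shift--invariant $\sigma$--algebra), every narrow accumulation point of $\{\nu_T\}$ equals $\nu_\infty$. Combined with relative compactness, and as the narrow topology on probability measures over the Polish space $\mathcal{T}$ is metrizable, a family with a unique accumulation point converges to it, so $\nu_T\to\nu_\infty$ narrowly as $T\to\infty$. Evaluating against the bounded continuous $\Phi$ yields
\[
\lim_{T\to\infty}\frac{1}{T}\int_0^T F(\vr(t,\cdot),\vm(t,\cdot)) \dt = \int_{\mathcal{T}}\Phi \,\dd\nu_\infty ,
\]
the ensemble average of $F$ with respect to $\nu_\infty$, which proves existence of \eqref{I5a}. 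The closing ``accordingly'' assertion is then immediate: if every trajectory admits a uniform energy bound and each associated $\omega$--limit set supports a unique invariant measure, the argument applies verbatim to each trajectory, establishing the ergodic hypothesis \eqref{I5a} in full.
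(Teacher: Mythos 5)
Your proposal is correct and follows essentially the same route as the paper's proof (given as Theorem~\ref{thm:8.1}): Krylov--Bogolyubov occupation measures, tightness from compactness of $\mathcal{U}[\Ov{E}]$, identification of every narrow accumulation point as an invariant measure carried by $\omega[\vr,\vm]$, and uniqueness forcing narrow convergence of the whole family, finally tested against $F\circ\pi_0$. The only cosmetic difference is how the support is localized: you integrate $\min\{\dist(\cdot,\omega[\vr,\vm]),1\}$ using that the precompact orbit is attracted to its $\omega$--limit set, whereas the paper argues directly that any point of $\supp\nu$ is visited by arbitrarily large time shifts and hence lies in $\omega[\vr,\vm]$.
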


To conclude the introductory discussion, we remark that our result offers only a partial affirmative answer to the ergodic hypothesis.
On the other hand, Theorem \ref{thm:In5} points at a new direction of investigation in order to get a better understanding of the ergodic hypothesis, suggesting 
that the structure of the $\omega-$limit sets associated to entire solutions with globally bounded energy plays a crucial role in all this matter.

\section{Preliminaries, main hypotheses, weak solutions} 
\label{h} 

We start by stating the precise structural restrictions imposed on the constitutive equations and the data for the 
Navier--Stokes system \eqref{i1}--\eqref{i4}. To begin, we suppose that the boundary velocity $\vu_b$ admits an extension in 
$\Ov{Q}$. Moreover, we extend the total energy,   
\[
E \left(\vr, \vm \ \Big| \vu_b \right) = \left\{ \begin{array}{l}  
\left[ \frac{1}{2} \frac{|\vm|^2}{\vr} - \vm \cdot \vu_b + \frac{1}{2} \vr |\vu_b|^2 + P(\vr) \right] \ \mbox{if}\ \vr > 0, 
\\ \\ \infty \ \mbox{if}\ \vr < 0 \ \mbox{or}\ \vr = 0, \vm \ne 0, 
\\ \\ 0 \ \mbox{if}\ \vr - 0, \ \vm = 0,
\end{array} \right. 
\]
to a convex, l.s.c. function of variables $(\vr, \vm) \in R^4$.

\subsection{Boundary data}
We proceed by a list of basic hypotheses to be imposed on the data as well as the geometry of the spatial domain. They are probably not optimal but necessary for the available mathematical theory. We focus on the physically relevant 3D setting, similar results can be shown in the 2D case. We leave apart the 1D geometry, where better results are expected as the problem is known to be well posed with respect to the initial data.

We suppose that $Q \subseteq R^3$ is a bounded domain of class $C^2$, with a given vector field  $\vu_b \in C^1(\partial Q;R^{3})$. 
Accordingly, we decompose   
\begin{equation*}
\partial Q = \Gamma_{\rm in} \cup \Gamma_{\rm out}, \ 
\Gamma_{\rm in} = \left\{ x \in \partial Q \ \Big|\ \vu_b(x) \cdot \vc{n}(x) < 0 \right\},\ 
\Gamma_{\rm out} = \left\{ x \in \partial Q \ \Big|\ \vu_b(x) \cdot \vc{n}(x) \geq 0 \right\},
\end{equation*}    
and assume
\begin{equation*}
\Gamma_{\rm out} \ \mbox{is}\ C^2-\mbox{manifold with boundary.}
\end{equation*}
Finally, we suppose
\begin{equation*}
\vr_b \in C(\partial Q),\ \inf_{\partial \Omega}\ \vr_b > 0, 
\end{equation*}
and
\begin{equation*}
\vc{g} \in C(\Ov{Q}; R^3).
\end{equation*}
  
The pressure $p = p(\vr)$ is a continuously differentiable function of the density satisfying
\begin{equation}\label{h1}
\begin{split}
&p \in C^1[0, \infty) \cap C^2(0, \infty),\ p(0) = 0, \\
&\ p'(\vr) \geq \uline{a}\vrho^{\g-1} \ \mbox{ for }\ \vr > 0,\quad
p'(\vr) \leq \Ov{a}\vr^{\gamma - 1} \ \mbox{ for }\ \vr > 1,\ \mbox{ where }\ \gamma > \frac{3}{2}.
\end{split}
\end{equation}	
The hypothesis $\gamma > \frac{3}{2}$ is possibly technical, but so far indispensable both for the existence theory and asymptotic 
compactness of bounded trajectories.

All hypotheses stated in this section will be tacitly assumed throughout the remaining part of the paper. 

\subsection{Weak solutions}
\label{w} 

Extending $\vu_b$ in $Q$, $\vu_b \in C^1(\Ov{Q}; R^3)$, we are ready to introduce the class of finite energy weak solutions 
to the Navier--Stokes system.

\begin{Definition} [Finite energy weak solution] \label{WD1}

We say that $[\vr, \vm]$ is a \emph{finite energy weak solution} of the Navier--Stokes system \eqref{i1}, \eqref{i2}, with the boundary 
conditions \eqref{i3}, \eqref{i4}
in $(\tau, \infty)$, $-\infty \leq \tau < \infty$ if the following holds:

\begin{itemize}

\item {\bf Regularity class.}
\[
\begin{split}
\vr &\in C_{{\rm weak,loc}}((\tau, \infty); L^\gamma (Q)) \cap
L^\gamma_{\rm loc}((\tau, \infty) ; L^\gamma (\partial Q, |\vu_b \cdot \vc{n}| \D S_x )),\ \vr \geq 0,\\
\vm &\in C_{{\rm weak,loc}}((\tau, \infty); L^{\frac{2 \gamma}{\gamma + 1}}(Q; R^3));
\end{split}
\]
there exists a velocity field $\vu$ such that $\vm = \vr \vu$ a.a., and
\[ 
(\vu - \vu_b) \in L^2_{\rm loc}((\tau,\infty); W^{1,2}_0(Q; R^3)).
\]

\item
{\bf Equation of continuity.}

The integral identity
\begin{equation} \label{w2}
\int_\tau^\infty \intO{ \Big[ \vr \partial_t \varphi + \vr \vu \cdot \Grad \varphi \Big] } \dt
=
\int_\tau^\infty \int_{\Gamma_{\rm out}} \varphi \vr \vu_b \cdot \vc{n} \ \D  S_x \ \dt
+
\int_\tau^\infty \int_{\Gamma_{\rm in}} \varphi \vr_b \vu_b \cdot \vc{n} \ \D  S_x \ \dt
\end{equation}
holds for any $\varphi \in C^1_c((\tau,\infty) \times \Ov{\Omega})$. In addition, a renormalized version of \eqref{w2}
\begin{equation} \label{w3}
\begin{split}
\int_\tau^\infty\!\!\! \intO{ \Big[ b(\vr) \partial_t \varphi + b(\vr) \vu \cdot \Grad \varphi -
\Big( b'(\vr) \vr - b(\vr) \Big) \Div \vu \vphi \Big] } \dt = \int_\tau^\infty\!\!\! \int_{\Gamma_{\rm in}} \varphi b(\vr_b) \vu_b \cdot \vc{n} 
\ \D S_x \dt
\end{split}
\end{equation}
holds for any
$\varphi \in C^1_c((\tau,\infty) \times ( {Q} \cup \Gamma_{\rm in} ))$, and any $b \in C^1[0, \infty)$, $b' \in C_c[0, \infty)$.

\item
{\bf Momentum equation.}

The integral identity
\begin{equation} \label{w4}
\int_\tau^\infty \intO{ \Big[ \vr \vu \cdot \partial_t \bfphi + \vr \vu \otimes \vu : \Grad \bfphi
+ p(\vr) \Div \bfphi - \mathbb{S}(\Ds \vu) : \Grad \bfphi + \vr \vc{g} \cdot \bfphi \Big] } \dt = 0
\end{equation}
holds for any $\bfphi \in C^1_c((\tau,\infty) \times Q; R^3)$.

\item {\bf Total energy balance.}

The total energy
satisfies
\begin{equation} \label{w5}
\begin{split}
&- \int_\tau^\infty \partial_t \psi \intO{E \left( \vr, \vu \Big| \vu_b \right) } \dt  +
\int_\tau^\infty \psi \intO{ \mathbb{S}(\Ds \vu) : \Ds \vu } \dt \\
&+ \int_\tau ^\infty \psi \int_{\Gamma_{\rm out}} P(\vr)  \vu_b \cdot \vc{n} \ \D S_x \dt  +
\int_\tau^\infty \psi \int_{\Gamma_{\rm in}} P(\vr_b)  \vu_b \cdot \vc{n} \ \D S_x \dt
\\	
&\leq- \int_\tau^\infty \psi 
\intO{ \left[ \vr \vu \otimes \vu + p(\vr) \mathbb{I} \right]  :  \Ds \vu_b } \dt  + 
\frac{1}{2} \int_\tau^\infty \psi \intO{ {\vr} \vu  \cdot \Grad |\vu_b|^2  }\dt
\\ &+ \int_\tau^\infty \psi \intO{ \mathbb{S}(\Ds \vu) : \Ds \vu_b }\dt  + \int_\tau^\infty \psi \intO{ \vr \vc{g} \cdot (\vu -
\vu_b) } \dt
\end{split}
\end{equation}
for any
$\psi \in C^1_c(\tau, \infty)$, $\psi \geq 0$; in addition,
\begin{equation} \label{w6}
\limsup_{t \to \tau +} \intO{E \left( \vr, \vu \Big| \vu_b \right) }< \infty.
\end{equation}

\end{itemize}

\end{Definition}

The solutions introduced in Definition \ref{WD1} are defined on an \emph{open} time interval $(\tau, \infty)$, including 
the case $\tau = - \infty$. In particular, the initial state of the system at the time $\tau$, if finite, has not been specified. 
On the other hand, as the energy at time $\tau$ is bounded, cf. \eqref{w6}, it is easy to show that $[\vr, \vm]$ can be extended as 
\[
\vr \in C_{{\rm weak,loc}}([\tau, \infty); L^\gamma (Q)),\ 
\vm \in C_{{\rm weak,loc}}([\tau, \infty); L^{\frac{2 \gamma}{\gamma + 1}}(Q; R^3));
\] 
whence the initial state is well defined. The total energy, being a convex l.s.c. function of $[\vr, \vm]$,
is weakly l.s.c. on the associated phase space, and, in general, 
\begin{equation*}
\intO{ E \left( \vr(\tau, \cdot), \vm(\tau, \cdot) \Big| \vu_b \right) } \leq 
\liminf_{t \to \tau+}\intO{ E \left( \vr(t, \cdot), \vm(t, \cdot) \Big| \vu_b \right) }. 
\end{equation*}
Hypothetically, the energy may experience an ``initial jump'', meaning the solution 
\[
t \mapsto [\vr(t, \cdot), \vm(t, \cdot)] 
\]
may not be (right) continuous at $t = \tau$ with respect to the \emph{strong topology of} 
$L^\gamma(Q) \times L^{\frac{2 \gamma}{\gamma + 1}}(Q; R^3)$. As we shall see below, such a scenario is excluded for the stationary statistical solutions a.s., see Section~\ref{ss:cont-en}.

The \emph{existence} of finite energy weak solutions for $\tau$ finite and given initial data has been established by Chang, Jin, and 
Novotn\' y \cite{ChJiNo}, see also \cite{FanFei} for the necessary modification of the proof to accommodate the ``differential form'' 
of the energy inequality \eqref{w5}.

\subsubsection{Strong continuity of the density}
The following result is an extension of \cite[Proposition 4.3]{EF70} to the case of general boundary conditions.
 
\begin{Proposition}[Strong time continuity of the density] \label{wP1}

Let $[\vr, \vm]$ be a finite energy weak solution defined in $(\tau , \infty)$ and $[t_1,t_2] \subset (\tau , \infty)$ be a time 
interval. 

Then $\vr \in C([t_1,t_2]; L^1(Q))$.

\end{Proposition}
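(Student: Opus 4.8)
The plan is to upgrade the \emph{weak} time continuity, already contained in the regularity class $\vr \in C_{\rm weak,loc}((\tau,\infty);L^\gamma(Q))$, to \emph{strong} continuity in $L^1(Q)$ through the standard strict--convexity argument. Fix $s\in[t_1,t_2]$; weak continuity gives $\vr(t,\cdot)\upra\vr(s,\cdot)$ in $L^\gamma(Q)$ as $t\to s$, and since a weakly continuous map on the compact interval $[t_1,t_2]$ is bounded, the family $\{\vr(t,\cdot)\}_{t\ {\rm near}\ s}$ is bounded in $L^\gamma(Q)$ with $\gamma>1$, hence equi--integrable on the finite--measure set $Q$. The key reduction is the following: if one can produce a \emph{strictly convex} renormalizer $b$ for which $t\mapsto\intO{b(\vr)}$ is continuous at $s$, then combining $\vr(t)\upra\vr(s)$ with $\intO{b(\vr)(t)}\to\intO{b(\vr)(s)}$ forces, by the Visintin--type criterion, convergence $\vr(t)\to\vr(s)$ in measure; equi--integrability then promotes this to strong convergence in $L^1(Q)$. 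Thus the whole statement reduces to the time continuity of $\intO{b(\vr)}$.

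To obtain that continuity I would exploit the renormalized equation of continuity \eqref{w3}. As stated it carries only the inflow flux on $\Gamma_{\rm in}$, because its admissible test functions vanish near $\Gamma_{\rm out}$. The first essential step is to extend \eqref{w3} to test functions that do \emph{not} vanish on $\Gamma_{\rm out}$: choosing $\varphi=\psi(t)\phi(x)$ with $\phi$ approximating the constant $1$ but cut off near $\Gamma_{\rm out}$ and letting $\phi\to 1$, the convective term concentrates at the outflow boundary and converges to the outflow flux $\int_{\Gamma_{\rm out}} b(\vr)\,\vu_b\cdot\vc{n}\,\D S_x$. This is precisely where the boundary--trace regularity $\vr\in L^\gamma_{\rm loc}((\tau,\infty);L^\gamma(\partial Q,|\vu_b\cdot\vc{n}|\D S_x))$ enters, guaranteeing that $b(\vr)$ has an integrable trace against $|\vu_b\cdot\vc{n}|\D S_x$ whenever $b$ grows at most like $\vr^\gamma$. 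One is then led to the integrated balance
\[
\intO{ b(\vr)(t_2') } - \intO{ b(\vr)(t_1') } = -\int_{t_1'}^{t_2'} \intO{ \big( b'(\vr)\vr - b(\vr)\big)\Div \vu } \dt + \int_{t_1'}^{t_2'} \mathcal{B}_b(t)\,\dt,
\]
where $\mathcal{B}_b(t)=-\int_{\Gamma_{\rm out}} b(\vr)\,\vu_b\cdot\vc{n}\,\D S_x-\int_{\Gamma_{\rm in}} b(\vr_b)\,\vu_b\cdot\vc{n}\,\D S_x$ collects the boundary fluxes. Its right--hand side is absolutely continuous in time once the interior integrand lies in $L^1$, so, using the weak lower semicontinuity of $\vr\mapsto\intO{b(\vr)}$ together with the fact that the balance holds for a.e.\ pair of endpoints, one identifies $t\mapsto\intO{b(\vr)(t)}$ with a continuous function for the weakly continuous representative.

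The identity is first established for admissible renormalizers, i.e.\ $b'\in C_c[0,\infty)$; such $b$ are eventually constant and not strictly convex. The second step removes this restriction by approximation: I would take $b$ \emph{strictly convex on all of} $[0,\infty)$ but of at most linear growth with $b'(\vr)\vr-b(\vr)$ bounded (so that the interior integrand $(b'(\vr)\vr-b(\vr))\Div\vu$ is controlled by $\Div\vu\in L^2_{\rm loc}$ and lies in $L^1$ without any higher integrability of $\vr$), realized as a monotone limit $b_K\nearrow b$ of admissible functions with $b_K'\in C_c$. The uniform $L^\gamma$ bound, in the interior and on the outflow boundary, dominates all the resulting tails and legitimizes the passage $K\to\infty$, yielding continuity of $t\mapsto\intO{b(\vr)(t)}$ for the strictly convex $b$; the reduction of the first paragraph then gives $\vr\in C([t_1,t_2];L^1(Q))$.

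The main obstacle is the extension of the renormalized identity up to the outflow boundary and the control of the outflow flux: unlike the no--flux or periodic situation of \cite{EF70}, here the density genuinely leaves the domain through $\Gamma_{\rm out}$, and one must make rigorous sense of $b(\vr)|_{\Gamma_{\rm out}}$ and of its convergence under $b_K\to b$, relying crucially on the trace integrability built into Definition~\ref{WD1}. A secondary technical point, handled by weak lower semicontinuity and the a.e.\ validity of the balance, is to verify that the absolutely continuous function produced by the balance coincides with the weakly continuous representative of $\vr$.
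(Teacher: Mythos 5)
Your overall strategy (weak $L^\gamma$ continuity $+$ continuity of a strictly convex functional $\Rightarrow$ convergence in measure $\Rightarrow$ $L^1$ continuity by equi--integrability) is sound in principle, but it hinges on a step that you correctly identify as ``the main obstacle'' and which, as far as I can see, cannot be carried out from Definition~\ref{WD1} alone: the extension of the renormalized equation \eqref{w3} to test functions that do not vanish near $\Gamma_{\rm out}$. The restriction of \eqref{w3} to $\varphi \in C^1_c((\tau,\infty)\times(Q\cup\Gamma_{\rm in}))$ is not an accident of presentation --- it is there precisely because the outflow trace of $b(\vr)$ is not under control. When you let $\phi\to 1$ near $\Gamma_{\rm out}$, the concentrating term $\int b(\vr)\,\vu_b\cdot\Grad\phi$ has no reason to converge to $\int_{\Gamma_{\rm out}} b(\vr)\,\vu_b\cdot\vc{n}\,\D S_x$ computed from the trace of $\vr$ appearing in \eqref{w2}: the trace in \eqref{w2} is a normal trace defined only through the weak formulation, and it does not commute with the nonlinearity $b$ without additional information (in \cite{ChJiNo} such identities are obtained at the level of approximate solutions and passed to the limit, i.e.\ they carry extra data not encoded in the definition of a weak solution). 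Note that in the proof of Theorem~\ref{aP1} (Step~6) the very same boundary concentration is handled only as a \emph{one--sided} inequality, exploiting the sign $\vu_b\cdot\vc{n}\geq 0$ on $\Gamma_{\rm out}$ and convexity; a one--sided bound would only make $t\mapsto\intO{b(\vr)}$ minus an absolutely continuous function monotone, which is not enough for continuity. There is also a secondary gap you gloss over: weak lower semicontinuity gives $\intO{b(\vr(s))}\leq\lim_{t\to s}\intO{b(\vr(t))}$ at an exceptional time $s$, but not the reverse inequality, so identifying the a.e.--defined balance with the weakly continuous representative at \emph{every} time needs an actual argument.

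The paper's proof avoids both problems by being entirely local in space. It first reduces to the bounded truncations $b_k=T_k(\vr)$ via the uniform-in-time Chebyshev estimate $\|T_k(\vr)-\vr\|_{L^\infty_t L^1_x}\lesssim k^{-(\gamma-1)}$, then works with the DiPerna--Lions mollification $b_\ep=\theta_\ep * b$ on compact subsets $O\Subset Q$, where no boundary term ever appears. Applying convex $F$ to $b_\ep$ and passing $\ep\to 0$ gives $F(b)\in C_{\rm weak}([t_1,t_2];L^q_{\rm loc}(Q))$ for \emph{all} times (the mollification converges to the weakly continuous representative at every $t$, which resolves the exceptional--time issue cleanly); taking $F(z)=z^2$, weak continuity of both $b$ and $b^2$ yields strong continuity in $L^2_{\rm loc}$, and boundedness of $T_k(\vr)$ upgrades $L^1_{\rm loc}$ to $L^1(Q)$ since a thin neighborhood of $\partial Q$ contributes at most $2k\cdot\mc L(\mbox{neighborhood})$. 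If you want to salvage your route, you would have to either localize it in the same way (at which point it essentially becomes the paper's argument with $F(z)=z^2$ playing the role of your strictly convex $b$), or add the renormalized boundary identity on $\Gamma_{\rm out}$ as an extra axiom of the solution concept.
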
 

\begin{proof}
To begin with, we take a smooth function $T$ such that
\begin{equation} \label{def:trunc-op}
\begin{split}
&T\in C^\infty[0,\infty)\,,\qquad T \ \mbox{ is increasing and concave on }\ [0,\infty)\,, \\
&\qquad T(z)\,=\,z\quad\mbox{ for }\ z\in[0,1]\qquad\quad\mbox{ and }\qquad\quad T(z)\,=\,2\quad \mbox{ for }\ z\geq3\,.
\end{split}
\end{equation}
For any integer $k\in\N\setminus\{0\}$, we introduce the cut-off functions $T_k$ by the formula
\begin{equation} \label{def:T_k}
T_k(z)\,=\,k\,T\left(\frac{z}{k}\right)\,.
\end{equation}
Then, for all $t\in(\t,\infty)$ and almost every $x\in Q$, we define $b_k$ as
$$
b_k(t,x)\,=\,T_k\big(\vr(t,x)\big)\,.
$$
Notice that the hypotheses formulated on $T$ allow to use the renormalized continuity equation \eqref{w3} on $T_k$, for any $k$ fixed. So, we infer that $b_k=T_k(\vr)$
solves \eqref{w3}.

Let now $[t_1,t_2]\subset(\t,\infty)$ be a compact time interval. As a consequence of our definitions, of the properties of $\vr$ and of \eqref{w3} for $b_k=T_k(\vr)$,
for any $k\in\N\setminus\{0\}$ we get
\begin{equation} \label{eq:propr-b}
b_k \in L^\infty((t_1,t_2) \times Q) \qquad \mbox{ and }\qquad
b_k \in C_{\rm weak}([t_1,t_2]; L^q(Q)) \quad \mbox{ for any }\ 1 \leq  q < \infty\,.
\end{equation}
Next, we observe that the following convergence property holds true, in the limit $k\ra\infty$:
\begin{equation} \label{conv:trunc}
b_k\,=\,T_k(\vr)\,\longrightarrow\,\vr\qquad\qquad\mbox{ strongly in }\quad L^\infty([t_1,t_2];L^1(Q))\,. 
\end{equation}
In order to see this, we start by remarking that, for any $t\in[t_1,t_2]$ and any $k\in \N\setminus\{0\}$, by virtue of the Chebyshev inequality we have
$$
\mc L\left\{x\in Q\;\Big|\quad \vrho(t,x)\geq k\, \right\}\,\leq\,\frac{1}{k^\g}\,\int_{\left\{x\in Q|\,\vrho(t,x)\geq k \right\}}{\Big(\vr(t,x)\Big)^\g}\,\dx\,,
$$
where we have denoted by $\mc L(A)$ the Lebesgue measure of a set $A\subset Q$.
Then, for any $t\in[t_1,t_2]$, after setting
$$
A_k\,\equiv\,\left\{x\in Q\;\Big|\quad \vrho(t,x)\geq k\, \right\}\,,
$$
we can estimate
\begin{align*}
\intO{\left|b_k(t,\cdot)\,-\,\vr(t,\cdot)\right|}\,=\,\int_{A_k}\left|b_k(t,\cdot)\,-\,\vr(t,\cdot)\right|\,\dx\,&\leq\,\int_{A_k}b_k(t,\cdot)\,\dx\,+\,\int_{A_k}\vr(t,\cdot)\,\dx \\
&\leq\,2\,k\,\mc L(A_k)\,+\,\|\vr(t,\cdot)\|_{L^\g}\,\Big(\mc L(A_k)\Big)^{1/\g'} \\
&\leq\, k^{-(\g-1)}\,\left(2\,+\,\left\|\vr(t,\cdot)\right\|_{L^\g}^{\g-1}\right)\,,
\end{align*}
where, in the last step, we have used Chebyshev inequality and the fact that $\g/\g'\,=\,\g-1$. Taking the $\sup$ over $[t_1,t_2]$ of both sides of the previous inequality
completes the proof of \eqref{conv:trunc}.

Owing to the convergence property \eqref{conv:trunc}, the proof of the proposition boils down to showing that $b_k \in C([t_1,t_2]; L^1(Q))$ for any $k\in\N\setminus\{0\}$.
Note that the instantaneous values $b_k(t, \cdot) \in L^1(Q)$ are well defined for any $t$ by its weakly continuous representative. 
Since the next argument does not depend on the value of $k\geq 1$, for the rest of the proof we drop the index $k$ from the notation and simply write $b=b(\vr)$ for $b_k=b_k(\vr)=T_k(\vr)$.

So, let $b$ enjoy properties \eqref{eq:propr-b}, and consider its regularization $b_\ep = \theta_\ep * b$ by spatial convolution.
Standard properties of mollifying kernels and the same argument which led to \eqref{eq:propr-b} yield
\begin{equation} \label{w8} 
\begin{split}
&b_\ep \in C([t_1,t_2]; L^q(Q)) \ \mbox{and}\\
& \ b_\ep (t, \cdot) \to b(t, \cdot) \in L^q_{\rm loc}(Q) \ \mbox{as}\ 
\ep \to 0, \quad \mbox{for any}\ t \in [t_1,t_2],\  1 \leq q < \infty.
\end{split}
\end{equation}

It follows from the renormalized equation of continuity that 
\begin{equation} \label{w9}
\partial_t b_\ep + \Div ( b_\ep \vu) + \theta_\ep * f = r_\ep \ \mbox{in}\ (a,b) \times O,\ O \subset \Ov{O} \subset Q, 
\end{equation}
for all $\ep > 0$ small enough, where 
\[
f = \big(b'(\vr) \vr - b(\vr)\big) \Div \vu \in L^2((t_1,t_2) \times Q),\ 
r_\ep \to 0 \ \mbox{in} \ L^r((t_1,t_2) \times Q) \ \mbox{whenever}\ 1 \leq r < 2, 
\]
cf. DiPerna and Lions \cite{DL}. Using the fact that $b$ is bounded, we deduce from \eqref{w9}  
\begin{equation} \label{w10}
\partial_t F(b_\ep)+ \Div ( F(b_\ep) \vu) + F'(b_\ep) \theta_\ep * f = F'(b_\ep) r_\ep + 
\left( F(b_\ep) - F'(b_\ep) b_\ep \right) \Div \vu 
\end{equation}  
for any continuously differentiable convex function $F$. On the one hand, in view of \eqref{w8}, 
\[
F(b_\ep (t, \cdot)) \to F(b(t, \cdot)) \ \mbox{in}\ L^q_{\rm loc}(Q) \ \mbox{for any}\ t \in [t_1,t_2].
\]
On the other hand, in view of \eqref{w10}, we get (as done for \eqref{eq:propr-b} above) that
\[
F(b_\ep) \to \Ov{F} \ \mbox{in} \ C_{\rm weak}([t_1,t_2]; L^q_{\rm loc}(Q)).
\]
Necessarily, $\Ov{F} = F(b) \in C_{\rm weak}([t_1,t_2]; L^q_{\rm loc}(Q))$ for any differentiable convex function.
In particular, we deduce that $b^2\in C_{\rm weak}([t_1,t_2]; L^q_{\rm loc}(Q))$; combining this property with \eqref{eq:propr-b}, we infer that
$$
b \in C([t_1,t_2]; L^2_{\rm loc}(Q))\,,\qquad\mbox{ whence }\quad b \in C([t_1,t_2]; L^1_{\rm loc}(Q))\,.
$$
The time continuity with values in $L^1(Q)$ then follows by using the fact that $b\in L^\infty((t_1,t_2)\times Q)$.

As already remarked, combining this fact together with \eqref{conv:trunc}, we finally deduce the time continuity of $\vr$ with values in $L^1(Q)$.
\end{proof}

\subsection{Phase space, trajectory space, $\omega$--limit sets} \label{ss:Traj}

We introduce the \emph{phase space}
\[
H = L^1(Q) \times W^{-k,2}(Q; R^3),
\] 
the \emph{trajectory space}
\[
\mathcal{T} \equiv \left\{ (r, \vc{w}) \ \Big| \ r \in C(R; L^{1}(Q)),\ 
\vc{w} \in C(R; W^{-k,2}(Q; R^3)) \right\}
\]
and the metric on the trajectory space, 
\[
d_\mathcal{T} \Big[ (r_1, \vc{w}_1); (r_2, \vc{w}_2) \Big] \equiv 
\sum_{M=1}^\infty \frac{1}{2^M}
 \frac{ \sup_{t \in [-M, M]}\left\| (r_1, \vc{w}_1)(t, \cdot) - (r_2, \vc{w}_2)(t, \cdot) \right\|_{L^{1}\times W^{-k,2}} }
{1 + \sup_{t \in [-M, M]}\left\| (r_1, \vc{w}_1)(t, \cdot) - (r_2, \vc{w}_2)(t, \cdot) \right\|_{L^{1}\times W^{-k,2}} }.
\]

Given regularity of the spatial domain $Q$, we have the compact embedding $W^{k,2}(Q) \hookrightarrow\hookrightarrow C(\Ov{Q})$ for $k > \frac{3}{2}$ -- 
a condition assumed hereafter.
It is standard to observe  that 
\begin{itemize} 
\item
$[\mathcal{T}; d_{\mathcal{T}}]$ is a Polish space; 
 \item 
if $[\vr, \vm]$ is a finite energy weak solution in the sense of Definition \ref{WD1}, then the trajectory $[\tvr, \tvm]$,
defined as 
\[
[\tvr(t, \cdot), \tvm(t, \cdot)] = \left\{ \begin{array}{l} \left[\vr(t, \cdot), \vm(t, \cdot) \right] \ \mbox{if}\ t > \tau, \\ 
\left[ \vr(\tau, \cdot), \vm(\tau, \cdot) \right] \ \mbox{if}\ t \leq \tau, \end{array} \right. 
\]
belongs to $\mathcal{T}$ as soon as $k > \frac{3}{2}$.
\end{itemize}

Next, we recall the definition of the space of bounded entire solutions,
\[
\mathcal{U}[\Ov{E}] = \left\{ [\vr, \vm] \ \Big|\ 
[\vr, \vm] \ \mbox{finite energy weak solution in} \ R \ \mbox{such that} \ \sup_{t \in R} \intO{ E\left( \vr, \vm \ \Big| \vu_b \right) }
\leq \Ov{E}
\right\}.
\]
Notice that, owing to Proposition \ref{wP1} and the compact embedding $W^{k,2}(Q) \hookrightarrow\hookrightarrow C(\Ov{Q})$ for $k > \frac{3}{2}$, any finite energy weak solution
$[\vr,\vm]$ in $R$ is continuous in time with respect to the strong topology of $H$. Thus, we have $\mc U[\Ov{E}]\subset\mc T$.

Finally, let $[\vr, \vm]$ be a solution of the Navier--Stokes system in $(0, \infty)$ such that
\begin{equation} \label{T1}
\sup_{t > 0} \intO{ E \left( \vr, \vm \Big| \ \vu_b \right)(t, \cdot) } \leq \Ov{E}.
\end{equation}
Extending 
\[
\vr(t, \cdot) = \vr(0, \cdot), \ \vm(t, \cdot) = \vm(0, \cdot) 
\]
we may assume $[\vr, \vm] \in \mathcal{T}$. Note that the $\omega-$limit set
\[
\omega[\vr, \vm] = \left\{ [r, \vc{w}] \in \mathcal{T}\ \Big| \ \mbox{there exists}\ T_n \to \infty 
\ \mbox{such that} \ [\vr, \vm](\cdot + T_n) \to (r, \vc{w}) \ \mbox{in}\ \mathcal{T} \right\}
\]
is a subset of the trajectory space $\mathcal{T}$ in contrast with a more conventional definition that 
identifies the $\omega-$limit set with the set of all possible limits of $[\vr(T_n, \cdot), \vm(T_n, \cdot)]$ in the \emph{phase space} 
$L^{1}(Q) \times W^{-k,2}(Q; R^3)$.

\section{Global solutions with bounded energy} \label{s:energy}

The main effect of general inflow/outflow boundary conditions \eqref{i3} is that there may be energy or mass exchanges between the system and the exterior. In particular, the energy of solutions
may not be globally bounded in time.

In the first part of this section, we prove a fundamental asymptotic compactness result for trajectories having globally bounded energy.
In Subsection \ref{ss:existence}, instead, we show that, under suitable assumptions on the boundary data, such trajectories indeed exist.

\subsection{Asymptotic compactness of bounded trajectories} \label{a}

The following result is absolutely crucial for the existence of stationary statistical solutions generated by bounded trajectories. 
It is new in the context of general inflow/outflow boundary conditions and as such may be of independent interest. We point out that the
proof cannot be done as a simple adaptation of the existence theory, as the crucial information on compactness of initial densities 
is missing. Instead, the uniform temporal decay of the density oscillation defect measure, first observed in \cite{EF53}, must be 
shown.

\begin{Theorem}[Asymptotic compactness] \label{aP1}

Let $\left\{ \vr_n, \vm_n \right\}_{n=1}^\infty$ be a sequence of finite energy weak solutions of the Navier--Stokes system in the sense of Definition \ref{WD1} defined in $(\tau_n, \infty) \times Q$,
with
\[
\tau_n \geq - \infty, \ \tau_n \to - \infty \ \mbox{as} \ n \to \infty,
\]
satisfying 
\[
\intO{ E \left( \vr_n, \vm_n \Big| \vu_b \right)(t, \cdot) } \leq \Ov{E} < \infty \ \mbox{for all}\ t > \tau_n,\ 
n = 1,2,\dots 
\]

Then there is a subsequence (not relabeled) such that the extended trajectories  
\begin{equation} \label{eq:extension}
\begin{split}
\vr_n(t,x) &= \vr_n(t, x), \ \vm_n(t,x) = \vm_n(t, x) \ \mbox{for}\ t > \tau_n ,\\ 
\vr_n(t,x) &= \vr_n(\tau_n +, x), \ \vm_n (t,x) = \vm_n (\tau_n +, x) \ \mbox{for}\ t \leq \tau_n, \ x \in Q,
\end{split}
\end{equation}
admit a limit in $[\mathcal{T}, d_{\mathcal{T}}]$, i.e. there exists $[\vr,\vm]\in [\mc T , d_{\mc T}]$ such that
\[
[\vr_n, \vm_n] \to [\vr, \vm] \ \mbox{ in } \ [\mathcal{T}, d_{\mathcal{T}}]. 
\]
In addition, $[\vr, \vm]$ is a finite energy weak solution of the Navier--Stokes system in $R \times Q$, in the sense of Definition \ref{WD1},
with $[\vr,\vm]\in\mc U[\oline{E}]$.
\end{Theorem}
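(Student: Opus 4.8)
The plan is to follow the Lions--Feireisl compactness machinery for the compressible Navier--Stokes system, adapted to the present setting where the customary strong convergence of the initial density is unavailable. First I would extract uniform bounds from the energy: the hypothesis $\intO{ E \left( \vr_n, \vm_n \Big| \vu_b \right) } \leq \Ov{E}$ yields $\vr_n$ bounded in $L^\infty(R; L^\gamma(Q))$ and $\vm_n$ bounded in $L^\infty(R; L^{\frac{2\gamma}{\gamma+1}}(Q; R^3))$, while integrating the energy inequality \eqref{w5} over any compact time interval and using the coercivity of $\mathbb{S}$ gives a uniform bound on $\vu_n - \vu_b$ in $L^2_{\rm loc}(R; W^{1,2}_0(Q; R^3))$.

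Next I would establish temporal compactness in weak topologies. From the momentum equation \eqref{w4} the derivative $\partial_t \vm_n$ is bounded in $L^1_{\rm loc}(R; W^{-\ell,2})$ for a suitable $\ell$, so that, combining this with the $L^\infty(R; L^{\frac{2\gamma}{\gamma+1}})$ bound and the compact embedding $L^{\frac{2\gamma}{\gamma+1}}(Q) \hookrightarrow\hookrightarrow W^{-k,2}(Q)$, an Arzel\`a--Ascoli argument gives relative compactness of $\vm_n$ in $C_{\rm loc}(R; W^{-k,2})$. Similarly $\partial_t \vr_n = -\Div \vm_n$ provides equicontinuity of $\vr_n$ in a negative Sobolev space, which together with the $L^\gamma$ bound yields compactness of $\vr_n$ in $C_{\rm loc}(R; L^\gamma_{\rm weak})$. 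A diagonal extraction over the exhausting intervals $[-M,M]$ then produces a subsequence converging in $[\mathcal{T}, d_\mathcal{T}]$ to a limit $[\vr, \vm]$, with $\vu_n \upra \vu$ weakly in $L^2_{\rm loc}(R; W^{1,2})$. Passing to the limit in the continuity equation \eqref{w2} is then immediate, and \eqref{w3} follows by the DiPerna--Lions theory \cite{DL}; the only nonlinear obstructions in \eqref{w4} are $p(\vr_n) \to p(\vr)$ and $\vr_n \vu_n \otimes \vu_n \to \vr \vu \otimes \vu$, both of which require \emph{strong} convergence of the density.

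The heart of the argument, and the step I expect to be the main obstacle, is therefore the strong convergence $\vr_n \to \vr$ in $L^1_{\rm loc}(R \times Q)$. The route is the weak continuity of the effective viscous flux: testing \eqref{w4} with $\bfphi = \psi\, \Grad \Delta^{-1}[T_k(\vr_n)]$ and exploiting the $W^{-k,2}$ compactness of $\vm_n$ through a compensated--compactness (div--curl) lemma, one obtains the identity $\Ov{\left(p(\vr) - \left(\lambda + \frac{4}{3}\mu\right)\Div \vu\right) T_k(\vr)} = \Ov{\left(p(\vr) - \left(\lambda + \frac{4}{3}\mu\right)\Div \vu\right)}\,\Ov{T_k(\vr)}$, where overlines denote weak limits. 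The genuinely new difficulty, relative to the existence theory, is that one cannot initialize the density oscillation defect measure to zero, since no ``initial time'' carrying compact data is available. Following the idea first observed in \cite{EF53}, I would instead derive a differential inequality for the (nonnegative) oscillation defect $\mathcal{D}_k(t)$, by combining the renormalized continuity equations for $T_k(\vr_n)$ and for the limit with the effective--flux identity and the monotonicity of $p$; the pressure defect then enters with a favorable sign and supplies a dissipative term, producing an inequality of the form $\frac{\dd}{\dd t}\mathcal{D}_k(t) + c\, \mathcal{D}_k(t) \leq 0$. Integrating from a time $s$ to a fixed time $t$, bounding $\mathcal{D}_k(s)$ by a universal constant coming from the uniform energy bound, and then sending $s \to -\infty$ (which is legitimate since $\tau_n \to -\infty$), the factor $e^{-c(t-s)}$ forces $\mathcal{D}_k(t) = 0$; letting $k \to \infty$ yields the vanishing of the oscillation defect, that is, strong convergence of the density for every $t \in R$.

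Finally, with $\vr_n \to \vr$ strongly I would complete the passage to the limit in \eqref{w4} and in \eqref{w5}, the latter surviving by weak lower semicontinuity of the convex energy and of the dissipation. Proposition~\ref{wP1} upgrades the density convergence to $C_{\rm loc}(R; L^1(Q))$, so that $[\vr, \vm]$ is a genuine finite energy weak solution on all of $R$. The weak lower semicontinuity of $\intO{ E\left( \vr, \vm \Big| \vu_b \right) }$ together with the uniform bound then gives $\sup_{t \in R} \intO{ E\left( \vr, \vm \Big| \vu_b \right) } \leq \Ov{E}$, whence $[\vr, \vm] \in \mathcal{U}[\Ov{E}]$, as claimed.
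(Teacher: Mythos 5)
Your overall strategy coincides with the paper's: uniform energy bounds, an Arzel\`a--Ascoli argument in the weak topologies giving convergence in $[\mathcal{T},d_{\mathcal{T}}]$, passage to the limit with barred (Young--measure) nonlinearities, the effective viscous flux identity, and --- crucially --- replacing the missing compactness of initial densities by a dissipative differential inequality for the oscillation defect on a time interval extending to $-\infty$. That last idea (from \cite{EF53}) is exactly the paper's Step~6, and your backward-in-time integration sending $s\to-\infty$ is the same mechanism the paper uses, there phrased as: a non-negative, uniformly bounded function on all of $R$ satisfying $y'+\Psi(y)\le 0$ with $\Psi(Z)Z>0$ for $Z\ne0$ must vanish identically. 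Two details: for a pressure satisfying only \eqref{h1} the pressure defect controls the $L\log L$ defect only through such a nonlinear $\Psi$ (obtained after $k\to\infty$ as in \cite[Chapter 6]{EF70}), not through a linear term $c\,\mathcal{D}_k$, but the conclusion is unaffected; and the convective term $\vr_n\vu_n\otimes\vu_n$ does not in fact require strong convergence of the density --- it passes to the limit already from $\vm_n\to\vm$ in $C_{\rm weak}$ together with $\vu_n\rightharpoonup\vu$, cf.\ \eqref{a9}.

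The genuine gap is the step where you integrate the defect equation over $Q$ to obtain the scalar differential inequality for $\mathcal{D}_k(t)$. The renormalized relation for $\oline{L_k(\vr)}-L_k(\vr)$ (the paper's \eqref{a18}) holds only for test functions $\varphi\in C^1_c(R\times(Q\cup\Gamma_{\rm in}))$, i.e.\ vanishing near $\Gamma_{\rm out}$, so taking $\varphi=\psi(t)$ spatially homogeneous is not legitimate without further work. One must insert cut-offs $\phi_n$ and control the commutator term $\int\psi\left[\oline{L_k(\vr)}-L_k(\vr)\right]\vu_b\cdot\Grad\phi_n\dx$, which concentrates on a boundary layer and is \emph{not} $o(1)$ in general. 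The paper shows instead that it has a favorable sign: near $\Gamma_{\rm out}$ one has $\Grad {\rm dist}[x,\Gamma_{\rm out}]=-\vc{n}(\pig(x))$, the outflow condition gives $\vu_b\cdot\vc{n}\ge 0$ there, and convexity of $L_k$ makes the defect non-negative, so the leading contribution is $\le 0$ (with the piece near the $C^1$ curve $\partial\Gamma_{\rm in}$ of order $1/n$). Without this boundary sign analysis the inequality \eqref{a19} is not established, and this is precisely where the general inflow/outflow boundary conditions enter the compactness argument in an essential way --- it is one of the two novelties of the theorem, alongside the absence of compact initial data, which you did correctly identify and handle. Your proposal would go through verbatim on the torus or with no-slip boundaries, but needs this additional ingredient here.
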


\subsubsection{Proof of Theorem \ref{aP1}}
This paragraph is devoted to the proof of the previous statement. We proceed in several steps.

\medskip

\noindent{\bf Step 1 (uniform bounds and convergence):}

We assume that all the trajectories $[\vr_n, \vm_n]$ have been extended on $R\times Q$ by formula \eqref{eq:extension}.
As the energy is uniformly bounded, we get immediately 
\begin{equation} \label{a1}
\| \vr_n (t, \cdot) \|_{L^\gamma(Q)} + \intO{ \frac{|\vm_n|^2}{\vr_n}(t, \cdot) } \aleq 1 
\end{equation}
uniformly for $t \in R$, and, by means of H\" older inequality, 
\begin{equation} \label{a2}
\| \vm_n (t, \cdot) \|_{L^{\frac{2 \gamma}{\gamma + 1}}(Q; R^3)} \aleq 1 
\end{equation} 
uniformly for $t \in R$. As $k > \frac{3}{2}$ we conclude there there is a compact set $\mathcal{K} \subseteq W^{-k,2}(Q) 
\times W^{-k,2}(Q; R^3)$ such that
\begin{equation} \label{a3}
[\vr_n(t, \cdot), \vm_n(t, \cdot)] \in \mathcal{K} \ \mbox{for all}\ t \in R,\ n=1,2,\dots
\end{equation}

Next observe that the energy inequality \eqref{w5} yields the uniform bounds
\begin{equation} \label{a4}
\int_{-M}^M\!\! \intO{ \mathbb{S}(\Ds \vu_n) : \Ds \vu_n } \dt + 
\int_{-M}^M\!\! \int_{\partial Q} P(\vr_n)  \ |\vu_b \cdot \vc{n}| \ \D S_x \dt \leq C(M) \ \mbox{for any}\ 0 < M < - \tau_n,
\end{equation}
where we have set $\vr_n = \vr_b$ on $\Gamma_{\rm in}$ and  $n$ is sufficiently large so that $\tau_{n}<0$.
Consequently, by means of Korn--Poincar\' e inequality, 
\begin{equation*}
\int_{-M}^M \| \vu_n \|^2_{W^{1,2}(Q; R^d)} \dt \leq c(M) \ \mbox{for any}\ 0 < M < - \tau_n.
\end{equation*}
Finally, it follows from the field equations \eqref{w2}, \eqref{w4} that 
\begin{equation} \label{a6}
\left\| \frac{\D}{\dt}\!\! \intO{ \vr_n \varphi } \right\|_{L^\infty(-M,M)} + 
\left\| \frac{\D}{\dt}\!\! \intO{ \vm_n \cdot \bfphi } \right\|_{L^2(-M,M)} \leq c(M, \varphi, \bfphi) \ 
\mbox{for any}\ 0 < M < - \tau_n
\end{equation}
and any test functions $\varphi \in C^1_c(Q)$, $\bfphi \in C^1_c(Q; R^3)$.

In view of \eqref{a3}, \eqref{a6}, the abstract Arzel\` a--Ascoli Theorem, and the uniform bounds \eqref{a1}, \eqref{a2},
we may infer that 
\begin{equation} \label{a7}
\begin{split}
&\vr_n \to \vr \ \mbox{in}\ C_{{\rm weak}}([-M; M]; L^\gamma (Q)),\\
&\ \vm_n \to \vm \ \mbox{in}\ C_{{\rm weak}}([-M; M]; L^{\frac{2 \gamma}{\gamma + 1}} (Q; R^3)),\ 
M > 0 \ \mbox{arbitrary},
\end{split}
\end{equation}
passing to a suitable subsequence as the case may be. In particular,   
\[
[\vr_n, \vm_n] \to [\vr, \vm] \ \mbox{in} \ [\mathcal{T}, d_{\mathcal{T}}].
\]

\medskip

\noindent{\bf Step 2 (velocity and the limit in the convective terms):}

It remains to show that $[\vr, \vm]$ is a weak solution of the Navier--Stokes system in $R \times Q$. First observe that 
$\vr \geq 0$, being a (weak) limit of non--negative functions. Moreover, as the total energy $E(\vr, \vm \Big| \vu_b )$ is a convex l.s.c.
function,  we deduce 
\[
\intO{ E \left( \vr, \vm \ \Big| \ \vu_b \right)(t, \cdot) } \leq \Ov{E} \ \mbox{for}\ t \in R.
\]

In view of \eqref{a7}, we may suppose, extracting a suitable subsequence, 
\begin{equation*}
\vu_n \to \vu \ \mbox{weakly in}\ L^2(-M,M; W^{1,2}(Q; R^3)),\ \mbox{for any}\ M > 0.
\end{equation*}
Moreover, as $\gamma > \frac{3}{2}$, we deduce from \eqref{a7} that 
\begin{equation} \label{a9}
\vm = \vr \vu,\ 
\vr_n \vu_n \otimes \vu_n \to \vr \vu \otimes \vu \ 
\mbox{weakly in}\ L^\alpha ((-M; M) \times Q; R^{3 \times 3}) \ \mbox{for some}\ \alpha > 1.
\end{equation}
These and several other arguments used in what follows are nowadays quite well understood, and we refer to 
\cite[Chapter 4]{EF70} for details.

\medskip

\noindent{\bf Step 3 (equation of continuity):} 

With the previous estimates at hand, it is a routine matter to perform the limit in the equation of continuity 
\eqref{w2}. Indeed the bound \eqref{a4} yields, up to a subsequence, 
\[
\vr_n \to \vr \ \mbox{weakly in}\ L^\gamma ((-M; M) \times \partial Q; |\vu_b \cdot \vc{n}| \D S_x );
\]
whence 
\begin{equation*}
\int_R \intO{ \Big[ \vr \partial_t \varphi + \vr \vu \cdot \Grad \varphi \Big] } \dt
=
\int_R \int_{\Gamma_{\rm out}} \varphi \vr \vu_b \cdot \vc{n} \ \D  S_x \ \dt
+
\int_R \int_{\Gamma_{\rm in}} \varphi \vr_b \vu_b \cdot \vc{n} \ \D  S_x \ \dt
\end{equation*}
for any $\varphi \in C^1_c(R \times \Ov{Q})$.

\medskip

\noindent{\bf Step 4 (equi--integrability of the pressure):} 

In order to perform the limit in the momentum and the energy balance, we need to establish equi--integrability of the pressure and the pressure potential with respect to the space variable.
This can be done exactly as in \cite[Section 4]{FanFei} (see also \cite[Section 6]{FP9}) obtaining that the sequences $\{ p(\vr_n) \}_{n=1}^\infty$ as well as $\{ P(\vr_n) \}_{n=1}^\infty$
are equi--integrable in  $(-M, M) \times Q$ for any $M > 0$. In particular, we may assume 
\begin{equation} \label{a11}
p(\vr_n) \to \Ov{p(\vr)},\ 
P(\vr_n) \to \Ov{P(\vr)} \ \mbox{weakly in}\ L^1((-M,M) \times Q) \ \mbox{for any}\ M > 0.
\end{equation}
Here and everywhere in the rest of this section, we use the symbol $\Ov{G(\vr)}$ to denote a weak $L^1-$limit of a sequence $\{ G(\vr_n) \}_{n=1}^\infty$.

Similarly, in view of \eqref{a9}, we have
\begin{equation} \label{a12}
E \left(\vr_n, \vm_n \Big| \vu_b \right) \equiv 
E \left(\vr_n, \vu_n \Big| \vu_b \right) \to \left( \frac{1}{2} \vr |\vu|^2 + \Ov{P(\vr)} \right) 
\ \mbox{weakly in}\ L^1((-M, M) \times Q).
\end{equation}

\medskip

\noindent{\bf Step 5 (weak convergence):}

With \eqref{a11}, \eqref{a12} at hand, we may perform the limit in the remaining family of the field equations: 
\begin{equation} \label{a13}
\begin{split}
\int_R \intO{ \Big[ \Ov{b(\vr)} \partial_t \varphi + \Ov{b(\vr)} \vu \cdot \Grad \varphi -
\Ov{ \Big( b'(\vr) \vr - b(\vr) \Big) \Div \vu } \Big] } \dt = \int_R  \int_{\Gamma_{\rm in}} \varphi b(\vr_b) \vu_b \cdot \vc{n} 
\ \D S_x \dt
\end{split}
\end{equation}
for any
$\varphi \in C^1_c(R \times ( Q \cup \Gamma_{\rm in} ))$, and any $b \in C^1[0, \infty)$, $b' \in C_c[0, \infty)$;
\begin{equation*}
\int_R \intO{ \Big[ \vr \vu \cdot \partial_t \bfphi + \vr \vu \otimes \vu : \Grad \bfphi
+ \Ov{p(\vr)} \Div \bfphi - \mathbb{S}(\Ds \vu) : \Grad \bfphi + \vr \vc{g} \cdot \bfphi \Big] } \dt = 0
\end{equation*}
for any $\bfphi \in C^1_c(R \times {Q}; R^3)$; and
\begin{equation} \label{a15}
\begin{split}
&- \int_R \partial_t \psi \intO{ \Ov{ E \left( \vr, \vm \Big| \vu_b \right) } } \dt  +
\int_R \psi \intO{ \mathbb{S}(\Ds \vu) : \Ds \vu } \dt \\
&+ \int_R  \psi \int_{\Gamma_{\rm out}} P(\vr)  \vu_b \cdot \vc{n} \ \D S_x \dt  +
\int_R \psi \int_{\Gamma_{\rm in}} P(\vr_b)  \vu_b \cdot \vc{n} \ \D S_x \dt
\\	
&\leq- \int_R \psi 
\intO{ \left[ \vr \vu \otimes \vu + \Ov{p(\vr)} \mathbb{I} \right]  :  \Ds \vu_b } \dt  + 
\frac{1}{2} \int_R \psi \intO{ {\vr} \vu  \cdot \Grad |\vu_b|^2  }\dt
\\ &+ \int_R \psi \intO{ \mathbb{S}(\Ds \vu) : \Ds \vu_b }\dt  + \int_R \psi \intO{ \vr \vc{g} \cdot (\vu -
\vu_b) } \dt
\end{split}
\end{equation}
for any
$\psi \in C^1_c(\tau, \infty)$, $\psi \geq 0$. Here, we have systematically used the symbol $\Ov{G(\vr, \vm)}$ to denote a weak $L^1-$limit of a sequence $\{ G(\vr_n, \vm_n) \}_{n=1}^\infty$, or, equivalently,
the expected value of $G$ with respect to a Young measure associated to $\{ \vr_n, \vm_n \}_{n=1}^\infty$. In general, such a process requires extracting a subsequence if necessary.

\medskip

\noindent{\bf Step 6 (a.e. convergence of density):}

The next step of the proof is showing strong (a.a. pointwise) convergence of the sequence of densities $\{ \vr_n \}_{n=1}^\infty$, or, equivalently, removing the upper bars over the nonlinearities in \eqref{a13}--\eqref{a15}. 
The key issue is to show that
\begin{equation} \label{eq:r_log-r}
\intO{\Big( \oline{\vr\,\log(\vr)}\,-\,\vr\,\log(\vr) \Big)(t,x)}\equiv0\qquad\mbox{ for all }\ t\in R\,.
\end{equation}
Indeed, thanks to the strict convexity of the function $\vr\mapsto\vr\log(\vr)$ on $[0,\infty)$, the previous property would imply (see \cite[Theorem 2.11]{EF70}), up to a subsequence, the required a.e. convergence.
In order to obtain \eqref{eq:r_log-r}, one would like to use the function $b(\vr)=\vr\log(\vr)$ in the renormalized continuity equation \eqref{w3} for the limit density $\vr$, and compare this with
\eqref{a13}. Unluckily, this strategy cannot work directly, due to the lack of integrability of $b'(\vr)\vr-b(\vr)=\vr$, which does not allow to properly define the product
$\big(b'(\vr)\vr-b(\vr)\big)\Div\vu$. Instead, one has to implement an approximation procedure.

To begin with, we recall the so--called Lions identity (cf. Lions \cite{LI4}),
\begin{equation} \label{a16}
\Ov{p(\vr) b(\vr)} - \Ov{p(\vr)} \ \Ov{b(\vr)} = \left( \lambda + \frac{2 d - 2}{d}\mu \right) \left( 
\Ov{b(\vr) \Div \vu} - \Ov{b(\vr)} \Div \vu \right)
\end{equation}
for any $b \in C^1[0, \infty)$, $b' \in C_c[0, \infty)$, where $d$ denotes the space dimension (recall that $d = 3$ for us).
As \eqref{a16} is of local character, the proof ``does not see'' the boundary conditions and can be done exactly as in \cite[Chapter 6]{EF70}. 

It follows form \eqref{a16} that the \emph{oscillation defect measure} introduced in \cite[Chapter 6]{EF70} is bounded: specifically, after introducing the cut-off functions $T_k$ as in
\eqref{def:trunc-op}--\eqref{def:T_k}, we have
\begin{equation} \label{a16a}
\sup_{k \geq 1} \left( \limsup_{n \to \infty} \int_{-M}^M \intO{ \left| T_k(\vr_n) - T_k(\vr) \right|^{\gamma + 1} } \dt \right)
\leq c(M)
\end{equation}
for any $M > 0$. We refer to \cite[Chapter 6]{EF70} for details.

Relations \eqref{a16}, \eqref{a16a} can be used to show that the limit functions $\vr$, $\vu$ satisfy the renormalized equation of continuity 
\eqref{w3}. The original proof from \cite[Chapter 6]{EF70} has been adapted in a nontrivial way to the inflow--outflow boundary conditions by Chang, Jin, and Novotn\' y \cite[Section 3.2]{ChJiNo}.
We may therefore conclude that   
\begin{equation} \label{a17}
\begin{split}
\int_R\intO{ \Big[ {b(\vr)} \partial_t \varphi + {b(\vr)} \vu \cdot \Grad \varphi -
\varphi { \Big( b'(\vr) \vr - b(\vr) \Big) \Div \vu } \Big] } \dt = \int_R \int_{\Gamma_{\rm in}} \varphi b(\vr_b) \vu_b \cdot \vc{n} 
\ \D S_x \dt,
\end{split}
\end{equation}
for any
$\varphi \in C^1_c(R \times ( Q \cup \Gamma_{\rm in} ))$, and any $b \in C^1[0, \infty)$, $b' \in C_c[0, \infty)$.

Now observe that validity of both \eqref{a13} and \eqref{a17} can be extended to the function 
\[
b(\vr) = L_k(\vr) , \ L_k'(\vr) \vr - L_k(\vr) = T_k(\vr),\ T_k(\vr) = \min \{ \vr, k \},
\]
that is a compactly supported perturbation of an affine function.
Consequently, subtracting \eqref{a17} from \eqref{a13} gives rise to  
\begin{equation} \label{a18}
\begin{split}
\int_R \intO{ \left(  \left[ \Ov{L_k(\vr)} - L_k(\vr) \right]\partial_t \varphi + \left[ \Ov{L_k(\vr)} - L_k(\vr) \right]  \vu \cdot \Grad \varphi + \varphi \left[ T_k(\vr) \Div \vu - \Ov{T_k(\vr) \Div \vu } \right]
 \right) } \dt = 0
\end{split}
\end{equation}
for any
$\varphi \in C^1_c(R \times ( Q \cup \Gamma_{\rm in} ))$.

Our next goal is to extend validity of \eqref{a18} to spatially homogeneous test functions, meaning $\varphi = 
\psi (t)$, $\psi \in C^1_c(R)$. To this end, consider a sequence of test functions $\{ \phi_n \}_{n=1}^\infty$ such that 
\[
\phi_n \in C^1_c(Q \cup \Gamma_{\rm in} ),\ 0 \leq \phi_n \leq 1,\ 
\phi_n = 1 \ \mbox{if} \ {\rm dist}[x, \partial Q] \geq \frac{1}{n},\ 
|\Grad \phi_n| \aleq n .
\]
Plugging $\varphi(t,x) = \psi(t) \phi_n (x)$ in \eqref{a18} we easily observe that 
\[
\begin{split}
\int_R \intO{ \left[ \Ov{L_k(\vr)} - L_k(\vr) \right]\partial_t \psi \phi_n  } \dt &\to 
\int_R \intO{ \left[ \Ov{L_k(\vr)} - L_k(\vr) \right] \partial_t \psi } \dt, \\ 
\int_R \intO{ \psi \phi_n \left[ T_k(\vr) \Div \vu - \Ov{T_k(\vr) \Div \vu} \right]  } \dt &\to 
\int_R \intO{\psi \left[ T_k(\vr) \Div \vu - \Ov{T_k(\vr) \Div \vu} \right]  } \dt
\end{split}
\]
by means of Lebesgue dominated convergence theorem. 
Finally, we write 
\[
\begin{split}
\int_R \intO{ \psi \left[ \Ov{L_k(\vr)} - L_k(\vr) \right] \vu \cdot \Grad \phi_n } \dt 
&= \int_R \intO{ \psi \left[ \Ov{L_k(\vr)} - L_k(\vr) \right] (\vu - \vu_b) \cdot \Grad \phi_n } \dt\\
&+ \int_R \intO{ \psi \left[ \Ov{L_k(\vr)} - L_k(\vr) \right] \vu_b \cdot \Grad \phi_n } \dt.
\end{split}
\]
As $(\vu - \vu_b) \in L^2_{\rm loc}(R; W^{1,2}_0(Q; R^3))$, we have 
\[
\frac{ (\vu - \vu_b) }{{\rm dist}[\cdot, \partial Q] } \in L^2_{\rm loc}(R; L^2(Q)). 
\] 
Moreover, since $L_k$ is a bounded perturbation of an affine function, the defect $\Ov{L_k(\vr)} - L_k(\vr)$
is bounded for any fixed $k$, and we obtain 
\[
\int_R \intO{ \psi \left[ \Ov{L_k(\vr)} - L_k(\vr) \right] (\vu - \vu_b) \cdot \Grad \phi_n } \dt \to 0 
\ \mbox{as}\ n \to \infty.
\] 

As for the last integral, given the assumed regularity of $\partial Q$ and its component $\Gamma_{\rm out}$, we introduce the closest point mapping, 
\[
\pig (x) = x_\Gamma \in {\Gamma_{\rm out}},\ 
| x - x_{\Gamma} | = \inf_{\widehat x \in \Gamma_{\rm out} }|x - \widehat{x}|,
\]
and the distance function
\[
x \in \Omega \mapsto {\rm dist}[x, \Gamma_{\rm in}] \in W^{1, \infty}(Q), 
\]
with its gradient
\[
\Grad {\rm dist}[x, \Gamma_{\rm out}] = \frac{ x - \pig(x) }{|x - \pig(x)|} \ \mbox{for a.a.}\ x \in Q.
\]
Finally, we observe that if  
\[
\pig(x) \in {\rm int}[\Gamma_{\rm out}], \ \mbox{meaning}\ \vu_b(\pig(x)) \cdot 
\vc{n}(\pig(x)) > 0, 
\]
then 
\begin{equation} \label{h4}
\frac{ x - \pig(x) }{|x - \pig(x)|} = \Grad {\rm dist}[x, \Gamma_{\rm out}] = 
- \vc{n}(\pig(x)).
\end{equation}

Now, consider  
\[
\phi_n(x) = \min \left\{ n {\rm dist}[x , \Gamma_{\rm in}]; 1 \right\}  \in  W^{1, \infty}(Q).
\]
cf. \cite[Section 7.2]{FeiNov2017} or Chang, Jin, and Novotn\' y \cite{ChJiNo}. 
Accordingly, after setting
$$
U\left(\Gamma_{\rm in}, \frac{1}{n}\right)\,\equiv\,\left\{x\in Q\;\Big|\quad {\rm dist}[x,\Gamma_{\rm in}]\leq\frac{1}{n}\right\}\,,
$$
we have
\[
\begin{split}
&\intO{ \left[ \Ov{L_k(\vr)} - L_k(\vr) \right] \vu_b \cdot \Grad \phi_n } = 
n \int_{U(\Gamma_{\rm in}, \frac{1}{n}) } \left[ \Ov{L_k(\vr)} - L_k(\vr) \right] \vu_b(x) \cdot \frac{ x- \pig(x)}{|x - \pig(x)|}
\ \dx\\ 
&\qquad= n \int_{U(\Gamma_{\rm in}, \frac{1}{n}) } \left[ \Ov{L_k(\vr)} - L_k(\vr) \right] (\vu_b(x) - 
\vu_b (\pig(x))) \cdot \frac{ x- \pig(x)}{|x - \pig(x)|} \dx \\ 
&\qquad\qquad+ n \int_{U(\Gamma_{\rm in}, \frac{1}{n}) } \left[ \Ov{L_k(\vr)} - L_k(\vr) \right] \vu_b(\pig(x)) \cdot \frac{ x- \pig(x)}{|x - \pig(x)|}
\ \dx.
\end{split}
\]
As $\vu_b$ is continuously differentiable in $\Ov{Q}$, we have 
\[
|\vu_b(x) - 
\vu_b (\pig(x))| \aleq \frac{1}{n} \ \mbox{for}\ x \in U\left(\Gamma_{\rm in}; \frac{1}{n}\right),
\]
whence
\[
n \int_{U(\Gamma_{\rm in}, \frac{1}{n}) } \left[ \Ov{L_k(\vr)} - L_k(\vr) \right] (\vu_b(x) - 
\vu_b (\pig(x))) \cdot \frac{ x- \pig(x)}{|x - \pig(x)|} \dx \to 0 \ \mbox{as}\ n \to \infty.
\]
Finally, 
\[
\begin{split}
n &\int_{U(\Gamma_{\rm in}, \frac{1}{n}) } \left[ \Ov{L_k(\vr)} - L_k(\vr) \right] \vu_b(\pig(x)) \cdot \frac{ x- \pig(x)}{|x - \pig(x)|}
\ \dx \\ &= n \int_{U(\Gamma_{\rm in}, \frac{1}{n}) \cap 
U(\partial \Gamma_{\rm in}, \frac{1}{n})  } \left[ \Ov{L_k(\vr)} - L_k(\vr) \right] \vu_b(\pig(x)) \cdot \frac{ x- \pig(x)}{|x - \pig(x)|}
\ \dx\\ &+ n \int_{U(\Gamma_{\rm in}, \frac{1}{n}) \cap 
U^c(\partial \Gamma_{\rm in}, \frac{1}{n})  } \left[ \Ov{L_k(\vr)} - L_k(\vr) \right] \vu_b(\pig(x)) \cdot \frac{ x- \pig(x)}{|x - \pig(x)|}
\ \dx.
\end{split}
\]
On one hand, 
as $\partial \Gamma_{\rm in}$ is a $C^1-$curve and all quantities under the integral are bounded, we get 
\[
n \int_{U(\Gamma_{\rm in}, \frac{1}{n}) \cap 
U(\partial \Gamma_{\rm in}, \frac{1}{n})  } \left[ \Ov{L_k(\vr)} - L_k(\vr) \right] \vu_b(\pig(x)) \cdot \frac{ x- \pig(x)}{|x - \pig(x)|}
\ \dx \approx \frac{1}{n}.
\]
On the other hand, by virtue of \eqref{h4} and convexity of $L_k$, 
\[
\begin{split}
n &\int_{U(\Gamma_{\rm in}, \frac{1}{n}) \cap 
U^c(\partial \Gamma_{\rm in}, \frac{1}{n})  } \left[ \Ov{L_k(\vr)} - L_k(\vr) \right] \vu_b(\pig(x)) \cdot \frac{ x- \pig(x)}{|x - \pig(x)|}
\ \dx \\&= 
- n \int_{U(\Gamma_{\rm in}, \frac{1}{n}) \cap 
U^c(\partial \Gamma_{\rm in}, \frac{1}{n})  } \left[ \Ov{L_k(\vr)} - L_k(\vr) \right] \vu_b(\pig(x)) \cdot 
\vc{n}(\pig(x)) \dx \leq 0.
\end{split}
\]
Going back to \eqref{a18} we deduce
\begin{equation} \label{a19} 
\frac{\D}{\dt} \intO{ \left[ \Ov{L_k(\vr)} - L_k(\vr) \right] } + 
\intO{ \left[ \Ov{T_k(\vr) \Div \vu } - T_k(\vr) \Div \vu \right] } \leq 0 
\end{equation}
in $\mathcal{D}'(R)$.

Now, we use \eqref{a16} to rewrite \eqref{a19} in the form
\begin{equation} \label{a20} 
\begin{split}
\frac{\D}{\dt} \intO{ \left[ \Ov{L_k(\vr)} - L_k(\vr) \right] } &+ d_0
\intO{ \left[ \Ov{p(\vr) T_k(\vr) } - \Ov{p(\vr)} \ \Ov{T_k(\vr)}  \right] } \\ &+ 
\intO{ \left( \Ov{T_k(\vr)} - T_k(\vr) \right) \Div \vu  }
\leq 0, 
\end{split}
\end{equation}
where we have set
\[
d_0 = \left(  \lambda + \frac{2 d - 2}{d}\mu \right)^{-1} > 0 \ (d = 3).
\]
Next, in view of \eqref{a16a}, 
\begin{equation*}
\intO{ \left( \Ov{T_k(\vr)} - T_k(\vr) \right) \Div \vu  } \to 0 \ \mbox{in}\ L^1(-M,M) 
\ \mbox{as}\ k \to \infty 
\end{equation*}
for any $M > 0$, see \cite[Chapter 6]{EF70}. Now, we are in the situation handled in \cite[Section 2]{EF53} or 
\cite[Chapter 6]{EF70}. In view of assumptions \eqref{h1} on the pressure function, arguing exactly as in \cite[Chapter 6, Section 6.6.2]{EF70}, we perform the limit 
$k \to \infty$ in \eqref{a20} to conclude
\begin{equation*}
\frac{\D }{\dt} \intO{ \left[ \Ov{\vr \log(\vr)} - \vr \log(\vr) \right] } + 
\Psi \left( \intO{ \left[ \Ov{\vr \log(\vr)} - \vr \log(\vr) \right] } \right) \leq 0 \ \mbox{in}\ \mathcal{D}'(R), 
\end{equation*}
where 
\[
\Psi \in C(R), \Psi(0) = 0,\ \Psi(Z) Z > 0 \ \mbox{for}\ Z \ne 0.
\]
As the function 
\[
t \mapsto \intO{ \left[ \Ov{\vr \log(\vr)} - \vr \log(\vr) \right](t, \cdot) }
\]
is non--negative and uniformly bounded on $R$, we conclude 
\[
\intO{ \left[ \Ov{\vr \log(\vr)} - \vr \log(\vr) \right](t, \cdot) } = 0 \ \mbox{for all}\ t \in R, 
\]
which yields the a.e. convergence of $\{ \vr_n \}_{n=1}^\infty$.

\medskip

\noindent{\bf Step 7 (strong compactness of the density):}

Revisiting {\bf Step 6} we observe that we have actually proved that
\[
\vr_n \to \vr \ \mbox{strongly in}\ L^q_{\rm loc}(R; L^q(Q)),\ 1 \leq q < \gamma.
\]
Seeing that, in view of Proposition \ref{wP1}, 
$
\vr_n, \ \vr \in C_{\rm loc}(R; L^1(Q)),
$
we show a stronger statement, namely 
\[
\vr_n \to \vr \in C_{\rm loc}(R; L^1(Q)).
\]
In particular,  we prove that
\[
\vr_n \to \vr \ \mbox{in}\  C([-M, M]; L^1(Q)) \ \mbox{for any}\ M > 0.
\]
To this end, consider the cut-off operators 
\[
T_k (\vr) = \min\{ \vr, k \}. 
\]
Now, pick $M>0$ and keep it fixed. As 
$
\vr_n$, $\vr$ are uniformly bounded in $L^\gamma(Q), 
$
we get 
\[ 
\| \vr_n - T_k(\vr_n) \|_{L^1(Q)} + \| \vr - T_k(\vr) \|_{L^1(Q)} \to 0 
\ \mbox{as}\ k \to \infty \ \mbox{uniformly for}\ t \in [-M,M].
\]
Consequently, it is enough to show 
\[
T_k(\vr_n) \to T_k(\vr) \ \mbox{in}\ C(-M; M; L^1(Q)) \ \mbox{as}\ n \to \infty 
\]
for any fixed $k$.

Similarly to the proof of Proposition \ref{wP1}, we consider 
\[
b(\vr_n) = T_k (\vr_n), \ T^2_k (\vr_n) 
\]
in the renormalized equation of continuity. 
In view of density compactness established in {\bf Step 6}, 
we may use the renormalized equation  of continuity to show
\begin{equation} \label{a31}
T_k(\vr_n) \to T_k(\vr) \ \mbox{in}\  C_{\rm weak}([-M, M]; L^2(Q)),\ 
T^2_k(\vr_n) \to T_k^2(\vr) \ \mbox{in}\  C_{\rm weak}([-M, M]; L^1(Q)).
\end{equation}
Seeing that 
\[
T^2_k(\vr_n) - T^2_k(\vr) = 2 T_k(\vr) \big(T_k(\vr_n) - T_k(\vr)\big) +  |T_k(\vr_n) - T_k(\vr)|^2,
\]
we observe it is enough to show 
\begin{equation} \label{a30}
T_k(\vr) \big(T_k(\vr_n) - T_k(\vr)\big) \to 0 \ \mbox{in}\ C_{\rm weak}([-M, M]; L^1(Q)).
\end{equation}
As the limit density is \emph{strongly continuous in} $L^1(Q)$, the image of the compact set $[-M,M]$ under the mapping 
\[
t \in [0,T] \mapsto T_k(\vr)(t, \cdot) \in L^q(Q) \ \mbox{is compact for arbitrary}\ 1 \leq q < \infty.
\]
In particular, the curve 
$
t \mapsto T_k(\vr)(t)
$
can be uniformly approximated by a piecewise constant function ranging in $L^{\gamma'}(Q)$. In particular, 
\[
T_k(\vr) \big(T_k(\vr_n) - T_k(\vr)\big) \approx \sum_{i} \mds{1}_{I_i}(t) w_i(x) \big(T_k(\vr_n) - T_k(\vr)\big), \quad \cup_{i} I_i = [-M,M], w_i \in L^{\gamma'}(Q);
\] 
whence \eqref{a30} follows from \eqref{a31}.

Theorem \ref{aP1} is now proved.

\subsection{Existence of trajectories having globally bounded energy} \label{ss:existence}

The goal of this subsection is to give sufficient (non--trivial) conditions on the boundary data $\vu_b$, which guarantee the existence of trajectories with globally bounded energy. For the sake of simplicity, we focus on the physically relevant case of a potential 
external force, 
\begin{equation} \label{POT}
\vc{g} = \Grad G,\ G \in C^1(\Ov{Q}).
\end{equation} 
Accordingly, the associated term in the energy inequality \eqref{w5} rewrites as 
\[
\intO{ \vr \vu \cdot \Grad G } =  \frac{\D}{\dt} \intO{ \vr G } + \int_{\Gamma_{\rm out}} \vr G \vu_b \cdot \vc{n} 
\ \D S_x  + \int_{\Gamma_{\rm in}} \vr_b G \vu_b \cdot \vc{n} \ \D S_x,  
\] 
and, similarly, 
\[
\intO{ \vr \vu \cdot \Grad |\vu_b|^2 } =  \frac{\D}{\dt} \intO{ \vr |\vu_b|^2 } + \int_{\Gamma_{\rm out}} \vr |\vu_b|^2 \vu_b \cdot \vc{n} 
\ \D S_x  + \int_{\Gamma_{\rm in}} \vr_b |\vu_b|^2 \vu_b \cdot \vc{n} \ \D S_x,  
\] 
Consequently, going back to the energy inequality \eqref{w5}, we deduce 
\begin{equation} \label{POT1}
\begin{split}
&\frac{\D}{\dt}  \intO{ \left[ E \left( \vr, \vu \Big| \vu_b \right) - \vr \left( G + \frac{1}{2} |\vu_b|^2 \right) \right] }   +
\intO{ \mathbb{S}(\Ds \vu) : \Ds \vu } \\ 
&\leq-  
\intO{ \left[ \vr \vu \otimes \vu + p(\vr) \mathbb{I} \right]  :  \Ds \vu_b }   -\intO{ \vr \Grad G \cdot 
\vu_b }  + K, 
\end{split}
\end{equation}
in $\mathcal{D}'(\tau_n, \infty)$, 
with a positive constant $K$ depending solely on the data $\vu_b, \vr_b, G$. We see immediately that the crucial term is the first integral on the right as it is proportional to the total energy. We therefore choose a ``cheap'' solution supposing 
that $\mathbb{D}_x \vu_b(x)$ is a positively semi--definite matrix. Moreover, we shall assume even more, namely that 
$\inf_Q \div \vu_b > 0$. Under these circumstances, we claim the following result. 

\begin{Proposition} [Bounded trajectories] \label{p:energy}

Let the driving force $\vc{g}$ satisfy \eqref{POT}, and let $\vu_b$ admit an extension satisfying 
\begin{equation} \label{ass:u_b_3}
\Ds\vu_b\,\geq\,0\,,\qquad\qquad \Div \vu_b\,\geq\, \alpha \,>\,0
\quad \mbox{ in }\ \oline Q.
\end{equation}
Suppose that $[\vr, \vm]$ is a finite energy weak solution of the Navier--Stokes system in $(\tau, \infty)$, 
$\tau \geq -\infty$  in the sense of Definition \ref{WD1}. 

Then there is a constant $\Ov{E}$, depending solely on the data, such that 
\begin{equation*}
\limsup_{t \to \infty} \intO{ E \left( \vr, \vu \Big| \vu_b \right)(t, \cdot) } \leq \Ov{E}.
\end{equation*}

\end{Proposition}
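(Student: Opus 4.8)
The plan is to convert the differential energy inequality \eqref{POT1} into an autonomous \emph{dissipative} ordinary differential inequality for a suitably modified energy, and then read off the absorbing bound by scalar comparison. First I would introduce the modified energy already appearing under the time derivative in \eqref{POT1},
\[
\mathcal{E}(t)\,\equiv\,\intO{\left[E\left(\vr,\vu\,\Big|\,\vu_b\right)\,-\,\vr\left(G+\tfrac12|\vu_b|^2\right)\right]}\,.
\]
Since the subtracted terms are linear in $\vr$ while $P(\vr)$ grows like $\vr^\gamma$ with $\gamma>3/2>1$, Young's inequality and the pressure bounds \eqref{h1} show that $\mathcal{E}$ and the genuine energy $\intO{E(\vr,\vu|\vu_b)}$ are comparable up to additive and multiplicative constants, e.g. $\tfrac12\intO{E(\vr,\vu|\vu_b)}-C\le\mathcal{E}\le C\intO{E(\vr,\vu|\vu_b)}+C$; hence it suffices to produce an absorbing bound for $\mathcal{E}$.

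Next I would exploit the two structural hypotheses \eqref{ass:u_b_3} to handle the right-hand side of \eqref{POT1}. Because $\vu\otimes\vu$ is positive semidefinite and $\Ds\vu_b\geq0$, the convective contribution is favorable, $-\intO{\vr\,\vu\otimes\vu:\Ds\vu_b}\le0$, and may be discarded. The pressure contribution is coercive, $-\intO{p(\vr)\,\Div\vu_b}\le-\alpha\intO{p(\vr)}\le-c\intO{\vr^\gamma}+C$ by the lower bound in \eqref{h1}, while the remaining terms $-\intO{\vr\,\Grad G\cdot\vu_b}+K$ are controlled by $C\intO{\vr}+C\le\tfrac{c}{2}\intO{\vr^\gamma}+C$, again by Young. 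On the left, Newton's law \eqref{i2} together with the Korn--Poincar\'e inequality applied to $\vu-\vu_b\in W^{1,2}_0$ gives $\intO{\mathbb{S}(\Ds\vu):\Ds\vu}\ge c\|\vu-\vu_b\|_{W^{1,2}}^2-C$. Writing $\mathscr{D}\equiv\|\vu-\vu_b\|_{W^{1,2}}^2+\intO{\vr^\gamma}$, inequality \eqref{POT1} thus reduces to $\frac{\D}{\dt}\mathcal{E}+c\,\mathscr{D}\le C$ in $\mathcal{D}'(\tau,\infty)$, with $c,C$ depending only on the data.

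The heart of the matter, and the step I expect to be the main obstacle, is to bound $\mathcal{E}$ from above in terms of the dissipation $\mathscr{D}$. The pressure part of $\mathcal{E}$ is immediately dominated by $\intO{\vr^\gamma}$, but the kinetic part is only \emph{superlinear}: since $\gamma>3/2$ one has $2\gamma'\le6$ for the conjugate exponent $\gamma'=\gamma/(\gamma-1)$, so the Sobolev embedding $W^{1,2}(Q)\hookrightarrow L^6(Q)$ and H\"older's inequality give
\[
\intO{\vr\,|\vu-\vu_b|^2}\,\le\,\|\vr\|_{L^\gamma}\,\|\vu-\vu_b\|_{L^{2\gamma'}}^2\,\le\,C\,\|\vr\|_{L^\gamma}\,\|\vu-\vu_b\|_{W^{1,2}}^2\,,
\]
i.e. a \emph{product} of the two dissipated quantities rather than their sum, so a plain linear Gronwall argument cannot close. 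Instead I would split into cases: when $\|\vu-\vu_b\|_{W^{1,2}}^2\le\|\vr\|_{L^\gamma}^{\gamma-1}$ the kinetic term is dominated by $\intO{\vr^\gamma}$, whereas in the opposite regime one estimates $\|\vr\|_{L^\gamma}\|\vu-\vu_b\|_{W^{1,2}}^2\lesssim\|\vu-\vu_b\|_{W^{1,2}}^{2\gamma'}$. In either case $\mathcal{E}\le C\,\mathscr{D}^{\gamma'}+C$, equivalently $\mathscr{D}\ge c\,\mathcal{E}^{(\gamma-1)/\gamma}-C$.

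Substituting this lower bound, and replacing $\mathcal{E}$ by $\mathcal{E}+C_0\ge0$ to fix the sign (using the comparability from the first step), yields the autonomous dissipative inequality
\[
\frac{\D}{\dt}\mathcal{E}\,+\,c\,\mathcal{E}^{(\gamma-1)/\gamma}\,\le\,C\qquad\mbox{in }\ \mathcal{D}'(\tau,\infty)\,,
\]
with exponent $(\gamma-1)/\gamma\in(0,1)$. A standard comparison argument for this scalar ODE -- the right-hand side is strictly negative whenever $\mathcal{E}$ exceeds $(C/c)^{\gamma/(\gamma-1)}$ -- shows that $\limsup_{t\to\infty}\mathcal{E}(t)\le(C/c)^{\gamma/(\gamma-1)}$, and the comparability of $\mathcal{E}$ with the energy then furnishes the data-dependent constant $\Ov{E}$. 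The only genuine difficulty is the superlinear coupling of the kinetic energy to the dissipation, which forces the passage from a linear to a nonlinear (but still dissipative) differential inequality; all the other steps are routine consequences of \eqref{POT1}, the Korn--Poincar\'e inequality and the pressure hypotheses \eqref{h1}, and it is precisely here that the restriction $\gamma>3/2$ enters, through the embedding $W^{1,2}\hookrightarrow L^{2\gamma'}$.
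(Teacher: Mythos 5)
Your argument is correct, and its skeleton coincides with the paper's: both start from \eqref{POT1}, use $\Ds\vu_b\geq 0$ to discard the convective term and $\Div\vu_b\geq\alpha>0$ to extract the coercive contribution $\alpha\intO{p(\vr)}$, absorb the remaining linear-in-$\vr$ terms via \eqref{h1}, and thereby reach the dissipative inequality \eqref{POT4}; the conclusion is then a scalar absorbing argument for the modified energy $\mathcal{E}$. The two proofs part ways only in how that last step is executed. The paper needs just the \emph{qualitative} implication ``dissipation bounded $\Rightarrow$ modified energy bounded'' (stated as the ``first observation'', without proof), and closes with a dichotomy on unit intervals $[T,T+1]$: either the dissipation dips below $K+1$ at a Lebesgue point, forcing $\mathcal{E}\leq\Lambda(K)$ there, or $\mathcal{E}$ drops by $1$ across the interval; since $\mathcal{E}$ is bounded below and grows at rate at most $K$, good times recur with gaps at most $L$ and $\mathcal{E}\leq\Lambda(K)+LK$ eventually. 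You instead \emph{quantify} that implication as $\mathcal{E}\leq C\,\mathscr{D}^{\gamma'}+C$ through the H\"older--Sobolev interpolation $\intO{\vr|\vu-\vu_b|^2}\leq\|\vr\|_{L^\gamma}\|\vu-\vu_b\|^2_{L^{2\gamma'}}$ and the case split (this is exactly where $\gamma>3/2$ enters, in both versions), and then run a nonlinear comparison for $\frac{\D}{\dt}\mathcal{E}+c\,\mathcal{E}^{1/\gamma'}\leq C$. Your route costs the explicit exponent $\gamma'$ but buys an explicit absorbing radius $(C/c)^{\gamma'}$ and a rate of entry into it; the paper's dichotomy is softer and sidesteps the interpolation altogether. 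One point of care shared by both: \eqref{POT4} holds only in $\mathcal{D}'(\tau,\infty)$, so the comparison must be performed for a representative of $\mathcal{E}$ that can jump only downward (which is what $\frac{\D}{\dt}\mathcal{E}\leq K$ in the distributional sense supplies); your appeal to a ``standard comparison argument'' covers this, and the paper's insistence on Lebesgue points in \eqref{POT5} plays the same role.
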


\begin{proof}

Under the hypothesis \eqref{ass:u_b_3}, we may rewrite the energy inequality \eqref{POT1} in the form 
\begin{equation} \label{POT4}
\frac{\D}{\dt}  \intO{ \left[ E \left( \vr, \vu \Big| \vu_b \right) - \vr \left( G + \frac{1}{2} |\vu_b|^2 \right) \right] }   +
\frac{\alpha}{2} \intO{ p(\vr) } + 
\intO{ \mathbb{S}(\Ds \vu) : \Ds \vu } \leq K 
\end{equation}
in $\mathcal{D}'(\tau, \infty)$, with a constant $K$, possibly different from its counterpart in \eqref{POT1}
but still depending only on the data. 

The first observation is that whenever 
\[
\frac{\alpha}{2} \intO{ p(\vr) } + 
\intO{ \mathbb{S}(\Ds \vu) : \Ds \vu } \leq K + 1 \ \mbox{for}\ \vr \in L^\gamma (Q), \ \vu \in W^{1,2}(Q; R^3),\ 
\vu|_{\partial \Omega} = \vu_b 
\]
then
\[ 
\intO{ \left[ E \left( \vr, \vu \Big| \vu_b \right) - \vr \left( G + \frac{1}{2} |\vu_b|^2 \right) \right] } \leq \Lambda(K).
\]

Going back to \eqref{POT4}, we deduce for any interval $[T, T + 1]$, $T > \tau$, the following dichotomy: 
{\bf (i)} either
 there exists $t_0 \in (T, T+1)$ such that  
\begin{equation} \label{POT5}
\intO{ \left[ E \left( \vr, \vu \Big| \vu_b \right) - \vr \left( G + \frac{1}{2} |\vu_b|^2 \right) \right] 
(t_0, \cdot) } \leq \Lambda(K), 
\end{equation}
$t_0$ a Lebesgue point of 
\[
\intO{ \left[ E \left( \vr, \vu \Big| \vu_b \right) - \vr \left( G + \frac{1}{2} |\vu_b|^2 \right) \right]}; 
\]
\tbf{(ii)} or one has
\[
\begin{split}
\intO{ \left[ E \left( \vr, \vu \Big| \vu_b \right) - \vr \left( G + \frac{1}{2} |\vu_b|^2 \right)((T + 1)-) \right] }\\
\leq \intO{ \left[ E \left( \vr, \vu \Big| \vu_b \right) - \vr \left( G + \frac{1}{2} |\vu_b|^2 \right)(T+) \right] } - 1.
\end{split}
\]

From the above, we deduce that there exists $t_0 > \tau$ such that \eqref{POT5} holds. Moreover,
as the modified energy 
\[
\intO{ \left[ E \left( \vr, \vu \Big| \vu_b \right) - \vr \left( G + \frac{1}{2} |\vu_b|^2 \right) \right]}
\]
is bounded from below, there is a sequence $t_n \to \infty$ such that \eqref{POT5} holds for $t_0 = t_n$ and 
$|t_n - t_{n+1}| \leq L$. 
Finally, revisiting the energy inequality \eqref{POT4} we deduce that 
\begin{equation*}
\intO{ \left[ E \left( \vr, \vu \Big| \vu_b \right) - \vr \left( G + \frac{1}{2} |\vu_b|^2 \right) \right] 
(t, \cdot) } \leq \Lambda(K) + LK \ \mbox{for any}\ t > t_0,
\end{equation*}
which yields the desired uniform bound on the energy.
\end{proof}

\section{Dynamical system generated by shifts on trajectories}

Consider the action of the shift operator on the trajectory space $\mathcal{T}$:
\[
S_\tau (r, \vc{w}) (\cdot) = (r, \vc{w})( \cdot+\tau), \ \tau \in R. \qquad \qquad
\]
It is easy to show that:
\begin{itemize} 
\item
$
S_\tau : \mathcal{T} \to \mathcal{T} \ \mbox{is a continuous linear operator for any}\ \tau \in R;
$
\item $(S_{\tau})_{\tau\in R}$ defines a group of operators on $\mathcal{T}$, i.e.,
$
S_0 = {\rm Id},\ S_{\tau_1 + \tau_2} = S_{\tau_1} \circ S_{\tau_2}\ \mbox{for any}\ \tau_{1},\tau_{2} \in R.
$
\end{itemize}
We say that a set $\mathcal{A}\subset\mathcal{T}$ is shift invariant provided $S_{\tau}\mathcal{A}\subset\mathcal{A}$ for any $\tau\in R$. Note that since the operators $(S_{\tau})_{\tau\in R}$ form a group, shift invariance  implies that $S_{\tau}\mathcal{A}=\mathcal{A}$ for any $\tau\in R$.

Recall from Section~\ref{ss:Traj} that $\mathcal{U}[\Ov{E}]$ denotes the set of all global solutions with energy uniformly bounded by $\Ov{E}$. Note, however, that $\mathcal{U}[\Ov{E}]$ may be empty,
unless some conditions are imposed on the boundary data (see Subsection \ref{ss:existence} above for details).
As a consequence of  Theorem \ref{aP1}, we obtain in particular the desired compactness of the set of solutions $\mathcal{U}[\Ov{E}]$.

\begin{Proposition} \label{TP1}
The set $\mathcal{U}[\Ov{E}]$ is a compact shift invariant subset of $\left[ \mathcal{T}; d_{\mathcal{T}} \right]$.
\end{Proposition}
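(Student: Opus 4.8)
The plan is to split the statement into its two assertions---shift invariance and compactness---and to observe that the analytic core of the second has already been established in Theorem~\ref{aP1}, so that the present proposition is essentially a bookkeeping corollary.

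For shift invariance I would argue directly from the autonomous character of the system. First I would fix $[\vr,\vm]\in\mathcal{U}[\Ov{E}]$ and $\tau\in R$ and note that, since the data $\vu_b$, $\vr_b$ and $\vc{g}$ are time independent, shifting the time variable in the weak formulations \eqref{w2}--\eqref{w5} (equivalently, replacing each test function $\varphi(t,\cdot)$ by $\varphi(t-\tau,\cdot)$) shows that $S_\tau[\vr,\vm]=[\vr(\cdot+\tau,\cdot),\vm(\cdot+\tau,\cdot)]$ is again a finite energy weak solution on $R\times Q$ in the sense of Definition~\ref{WD1}. Since the defining bound involves the supremum over all $t\in R$, it is manifestly invariant under the shift, whence $S_\tau[\vr,\vm]\in\mathcal{U}[\Ov{E}]$; as $(S_\tau)_{\tau\in R}$ is a group, this gives $S_\tau\mathcal{U}[\Ov{E}]=\mathcal{U}[\Ov{E}]$ and in particular shift invariance.

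For compactness I would exploit that $[\mathcal{T},d_{\mathcal{T}}]$ is Polish, so that compactness of the subset $\mathcal{U}[\Ov{E}]\subset\mathcal{T}$ (the inclusion being recorded in Section~\ref{ss:Traj}) is equivalent to sequential compactness. The plan is then to take an arbitrary sequence $\{[\vr_n,\vm_n]\}_{n=1}^\infty\subset\mathcal{U}[\Ov{E}]$ and to apply Theorem~\ref{aP1} in the degenerate regime $\tau_n=-\infty$ for every $n$: each element is an entire solution, so the extension formula \eqref{eq:extension} is vacuous, the condition $\tau_n\to-\infty$ holds trivially for the constant sequence, and the uniform bound $\intO{E(\vr_n,\vm_n\,|\,\vu_b)(t,\cdot)}\leq\Ov{E}$ is part of the definition of $\mathcal{U}[\Ov{E}]$. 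Theorem~\ref{aP1} then furnishes a subsequence (not relabeled) converging in $[\mathcal{T},d_{\mathcal{T}}]$ to a limit that is itself a finite energy weak solution on $R\times Q$ lying in $\mathcal{U}[\Ov{E}]$. This shows the set is closed under sequential limits, hence sequentially compact, hence compact.

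I do not expect a genuine obstacle here: the entire difficulty---propagating strong compactness of the densities through the limit in the absence of any compactness of the ``initial data'', via the temporal decay of the oscillation defect measure---is precisely what Theorem~\ref{aP1} provides. The only points that need a little care are the verification that the time shift of an \emph{entire} solution remains an admissible solution (immediate from autonomy) and the legitimacy of invoking Theorem~\ref{aP1} with the constant sequence $\tau_n\equiv-\infty$, for which the hypothesis $\tau_n\to-\infty$ is satisfied vacuously.
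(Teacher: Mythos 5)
Your proposal is correct and follows exactly the paper's own route: shift invariance is read off from the time-translation invariance of Definition~\ref{WD1} (the system being autonomous), and compactness is obtained by applying Theorem~\ref{aP1} to an arbitrary sequence of entire solutions with $\tau_n\equiv-\infty$. The paper states both steps in one line without elaboration; your version merely fills in the same details.
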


\begin{proof}
The shift invariance is an immediate consequence of Definition~\ref{WD1}. It follows from Theorem~\ref{aP1} that $\mathcal{U}[\Ov{E}]$ is  compact.
\end{proof}

In view of the above, restriction of the translation semigroup $(S_{\tau})_{\tau\geq0}$ to the set $\mathcal{U}[\Ov{E}]$ together with its Borel $\sigma$-algebra defines a (topological) dynamical system on a compact Polish space. Then the notion of $\omega-$limit set $\omega[\vr,\vm]$ of  a finite energy weak solution $[\vr,\vm]$ in $(0,\infty)$ with uniformly bounded energy (see Section~\ref{ss:Traj}) corresponds to the usual definition of $\omega-$limit set within this dynamical system. Due to compactness of $\mathcal{U}[\Ov{E}]$, we obtain the following result.

\begin{Corollary} \label{TP2}
Let $[\vr, \vm]$ be a finite energy weak solution in $(0, \infty)$ satisfying 
\[
\intO{ E \left( \vr, \vm \Big| \vu_b \right) (t, \cdot) } \leq \Ov{E} \ \mbox{for all}\ t > 0.
\] 

Then the set $\omega[\vr, \vm]$ enjoys the following properties:
\begin{itemize}
\item
$
\omega[\vr, \vm] 
$
is non-empty;
\item 
$\omega[\vr, \vm]$ is a closed subset of $\mathcal{U}[\Ov{E}]$, in particular it is a compact subset of $[ \mathcal{T}; d_{\mathcal{T}}]$;

\item 
$\omega[\vr, \vm]$ is shift invariant;
\item $\omega[\vr, \vm]$ is connected.

\end{itemize}

\end{Corollary}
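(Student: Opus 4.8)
The plan is to treat $\omega[\vr,\vm]$ as the $\omega$--limit set of the precompact forward orbit of the point $[\vr,\vm]\in\mathcal{T}$ under the continuous semigroup $(S_\tau)_{\tau\geq0}$, so that all four assertions follow from the standard qualitative theory of dynamical systems on a compact metric space, the only genuine analytic input being the asymptotic compactness of Theorem~\ref{aP1}. The key preliminary observation is that the map $\tau\mapsto S_\tau[\vr,\vm]$ is continuous from $[0,\infty)$ into $[\mathcal{T};d_{\mathcal{T}}]$ (immediate from the uniform continuity of the curves $\vr,\vm$ on compact time intervals together with the definition of $d_{\mathcal{T}}$), and that the forward orbit $\{S_\tau[\vr,\vm]\colon\tau\geq0\}$ is precompact in $\mathcal{T}$. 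Indeed, given any sequence $\tau_n\geq0$, either $(\tau_n)$ has a bounded subsequence, along which the images lie in the continuous image of a compact interval and hence admit a convergent subsequence, or $\tau_n\to\infty$, in which case Theorem~\ref{aP1} applied with shifted starting times $-\tau_n\to-\infty$ furnishes a subsequence converging in $\mathcal{T}$.

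For non-emptiness I would simply pick any $T_n\to\infty$ and apply Theorem~\ref{aP1} to the shifted solutions $[\vr,\vm](\cdot+T_n)$; the resulting limit lies in $\omega[\vr,\vm]\cap\mathcal{U}[\Ov{E}]$ by the very definition of the $\omega$--limit set and the concluding statement of that theorem. The inclusion $\omega[\vr,\vm]\subset\mathcal{U}[\Ov{E}]$ is likewise a direct consequence of Theorem~\ref{aP1}, since every element of $\omega[\vr,\vm]$ is by construction a limit of such shifts. Closedness then follows from a routine diagonal argument: if $[r_m,\vc{w}_m]\in\omega[\vr,\vm]$ converge to $[r,\vc{w}]$ in $\mathcal{T}$, I would choose for each $m$ a time $T_m\to\infty$ with $d_{\mathcal{T}}(S_{T_m}[\vr,\vm],[r_m,\vc{w}_m])<1/m$, whence $S_{T_m}[\vr,\vm]\to[r,\vc{w}]$ and $[r,\vc{w}]\in\omega[\vr,\vm]$. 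Being a closed subset of the compact set $\mathcal{U}[\Ov{E}]$ (Proposition~\ref{TP1}), it is itself compact.

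Shift invariance is obtained from the group structure and continuity of the shifts recorded before Proposition~\ref{TP1}. If $[r,\vc{w}]\in\omega[\vr,\vm]$, say $S_{T_n}[\vr,\vm]\to[r,\vc{w}]$ with $T_n\to\infty$, then for fixed $\tau\geq0$ continuity of $S_\tau$ gives $S_{T_n+\tau}[\vr,\vm]=S_\tau S_{T_n}[\vr,\vm]\to S_\tau[r,\vc{w}]$, and since $T_n+\tau\to\infty$ this shows $S_\tau[r,\vc{w}]\in\omega[\vr,\vm]$; the group property then upgrades the inclusion $S_\tau\omega[\vr,\vm]\subset\omega[\vr,\vm]$ to equality, as already noted in the text.

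The main obstacle is connectedness, the only point requiring a genuine topological argument rather than a direct appeal to Theorem~\ref{aP1}. I would argue by contradiction: suppose $\omega[\vr,\vm]=A\cup B$ with $A,B$ nonempty, compact and disjoint, so that $\dist(A,B)=3\delta>0$ for some $\delta>0$. Since both $A$ and $B$ meet the $\omega$--limit set, I can select times $s_n<t_n$ with $s_n,t_n\to\infty$ such that $\dist(S_{s_n}[\vr,\vm],A)<\delta$ and $\dist(S_{t_n}[\vr,\vm],A)>2\delta$ for all large $n$. The real-valued map $\tau\mapsto\dist(S_\tau[\vr,\vm],A)$ is continuous, so by the intermediate value theorem there are $r_n\in(s_n,t_n)$ with $\dist(S_{r_n}[\vr,\vm],A)=\tfrac{3\delta}{2}$. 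As $r_n\to\infty$, precompactness of the forward orbit yields a subsequence with $S_{r_n}[\vr,\vm]\to z\in\omega[\vr,\vm]$, and continuity of the distance function forces $\dist(z,A)=\tfrac{3\delta}{2}$; the triangle inequality then gives $\dist(z,B)\geq\dist(A,B)-\dist(z,A)=\tfrac{3\delta}{2}>0$, so $z$ lies in neither $A$ nor $B$, contradicting $z\in A\cup B$. This rules out any disconnection and completes the proof.
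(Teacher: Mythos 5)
Your proposal is correct and follows the same route as the paper, which simply invokes Theorem~\ref{aP1} for non-emptiness and closedness in $\mathcal{U}[\Ov{E}]$ and refers to ``classical arguments from the theory of dynamical systems'' for shift invariance and connectedness; you have merely written out those classical arguments (precompactness of the forward orbit, the diagonal argument, the intermediate-value-theorem disconnection argument) in full. The only cosmetic remark is that in the shift-invariance step the restriction to $\tau\geq 0$ is unnecessary --- your argument ``$T_n+\tau\to\infty$'' works verbatim for every fixed $\tau\in R$, and it is the inclusion $S_\tau\omega[\vr,\vm]\subset\omega[\vr,\vm]$ for \emph{all} real $\tau$ that the group property needs in order to upgrade it to equality.
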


\begin{proof}
The fact that  the set $\omega[\vr, \vm]$ is a non-empty closed subset of 
$\mathcal{U}[\Ov{E}]$ follows from  Theorem~\ref{aP1}. The rest of the proof can be done by classical arguments from the theory of dynamical systems.
\end{proof}

\subsection{Non-wandering solutions}

With the above description of our model in the language of dynamical systems, we may deduce existence of solutions whose neighborhoods are revisited infinitely often.

\begin{Corollary}\label{cor:1}
Assume that $[\vr,\vm]$ is a global weak solution to the Navier-Stokes system, with $[\vr,\vm]\in\mc U(\oline E)$.
Then there exists a \emph{non-wandering} solution $[r,\vc{w}]\in\mathcal{U}[\Ov{E}]$, meaning,
for every $\varepsilon$-neighborhood $B_{\varepsilon}[r,\vc{w}]$ of $[r,\vc{w}]$ in $\mc T$ and every $T>0$, there exists $\tau>T$ so that
$B_{\varepsilon}[r,\vc{w}]\cap S_{\tau}B_{\varepsilon}[r,\vc{w}]\neq\emptyset$.
\end{Corollary}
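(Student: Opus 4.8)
The plan is to produce the non-wandering solution as a point of the $\omega$-limit set $\omega[\vr,\vm]$, relying on the classical principle from topological dynamics that, for a continuous semiflow on a compact metric space, every point of a nonempty $\omega$-limit set is non-wandering. First I would record the structural facts already at our disposal: by Proposition~\ref{TP1} the set $\mathcal{U}[\Ov{E}]$ is a compact, shift-invariant subset of $[\mathcal{T};d_{\mathcal{T}}]$, and the shifts $(S_\tau)_{\tau\geq 0}$ act continuously on it. Since the hypothesis $[\vr,\vm]\in\mathcal{U}[\Ov{E}]$ furnishes an entire solution, hence a genuine point of the compact set $\mathcal{U}[\Ov{E}]$, its forward orbit $\{S_\tau[\vr,\vm]:\tau\geq 0\}$ is contained in $\mathcal{U}[\Ov{E}]$; by Corollary~\ref{TP2} the associated $\omega$-limit set $\omega[\vr,\vm]$ is a nonempty (compact) subset of $\mathcal{U}[\Ov{E}]$. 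It therefore suffices to exhibit a non-wandering point inside $\omega[\vr,\vm]$, and I claim that in fact \emph{every} $[r,\vc{w}]\in\omega[\vr,\vm]$ is non-wandering.

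To prove the claim, fix $[r,\vc{w}]\in\omega[\vr,\vm]$ and choose $T_n\to\infty$ with $S_{T_n}[\vr,\vm]\to[r,\vc{w}]$ in $\mathcal{T}$. Let $\varepsilon>0$ and $T>0$ be given. Because $S_{T_n}[\vr,\vm]\to[r,\vc{w}]$, all but finitely many iterates $S_{T_n}[\vr,\vm]$ belong to $B_\varepsilon[r,\vc{w}]$; using in addition that $T_n\to\infty$, I would first fix an index $m$ with $S_{T_m}[\vr,\vm]\in B_\varepsilon[r,\vc{w}]$, and then an index $n>m$ with $T_n>T_m+T$ and $S_{T_n}[\vr,\vm]\in B_\varepsilon[r,\vc{w}]$. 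Setting $\tau\equiv T_n-T_m>T$ and invoking the semigroup property $S_{\tau}\circ S_{T_m}=S_{T_n}$, we obtain $S_\tau(S_{T_m}[\vr,\vm])=S_{T_n}[\vr,\vm]$. Hence $S_{T_n}[\vr,\vm]$ lies both in $B_\varepsilon[r,\vc{w}]$ and in $S_\tau B_\varepsilon[r,\vc{w}]$, the latter because it is the image under $S_\tau$ of the point $S_{T_m}[\vr,\vm]\in B_\varepsilon[r,\vc{w}]$; consequently $B_\varepsilon[r,\vc{w}]\cap S_\tau B_\varepsilon[r,\vc{w}]\neq\emptyset$ with $\tau>T$. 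As $\varepsilon$ and $T$ were arbitrary, $[r,\vc{w}]$ is non-wandering, which proves the claim and hence the corollary.

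The substantive content of this statement is already entirely contained in the asymptotic compactness Theorem~\ref{aP1} (through Proposition~\ref{TP1} and Corollary~\ref{TP2}): once it is known that the time shifts of a bounded-energy solution live in a compact shift-invariant subset of the trajectory space, the existence of a non-wandering solution is a soft consequence, so I do not expect a genuine obstacle here. The only minor point requiring care is that a priori we dispose only of the forward semigroup $(S_\tau)_{\tau\geq 0}$ rather than a full flow; this is, however, harmless, since the non-wandering property as formulated involves only forward shifts $\tau>T>0$, and moreover on the shift-invariant set $\mathcal{U}[\Ov{E}]$ the operators $S_\tau$ are in any case restrictions of the group $(S_\tau)_{\tau\in R}$.
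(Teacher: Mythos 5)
Your proof is correct and follows the same route as the paper: the paper's own proof simply invokes Corollary~\ref{TP2} together with the standard fact that every point of an $\omega$--limit set is non-wandering, which is exactly the claim you state and then verify in detail via the choice $\tau = T_n - T_m > T$. The only difference is that you spell out the classical topological-dynamics argument that the paper leaves implicit.
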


\begin{proof}
The result is a simple consequence of Corollary~\ref{TP2} together with the fact that every point in $\omega[\vr,\vm]$ is non-wandering.
\end{proof}

Once we prove the existence of an invariant measure in the following section, we are able to deduce a stronger result than Corollary~\ref{cor:1}, namely, existence of the so-called recurrent solution, see Section~\ref{ss:R}.

\section{Invariant measures - stationary solutions on the space of trajectories}
\label{I}

We adopt the approach to statistical solutions via the theory of stochastic processes. Let 
\[
\{ \Omega, \mathfrak{B}, \mathcal{P} \}
\]
be a measurable space, with the $\sigma$-algebra of measurable sets $\mathfrak{B}$, and a (complete) probabi\-lity measure $\mathcal{P}$.
Let us first recall some basic definitions and facts.
\begin{itemize}
\item A \emph{(stochastic) process} $[\vr, \vm]$ ranging in a separable Banach space  $H$ is a mapping
\[
[\vr, \vm]: \Omega \times R \to H
\]
satisfying
\[
[\vr (t, \cdot), \vm(t, \cdot)] : \Omega \to H 
\]
is $\mathfrak{B}$-measurable for any $t \in R$.

\item A (stochastic) process $[\vr, \vm]$ ranging in $H$ is called \emph{measurable} provided the mapping
\[
[\vr, \vm]: \Omega \times R \to H
\]
is $\mathfrak{B}\otimes \mathfrak{B}(R)$-measurable where $\mathfrak{B}(R)$ denotes the Borel set in $R$.

\item For a measurable (stochastic) process $[\vr, \vm]$ ranging in $H$ the path
$$
t \in R \mapsto [\vr(t, \omega), \vm(t, \omega)]
$$
is measurable for a.a. $ \omega \in \Omega$.

\item A measurable (stochastic) process $[\vr, \vm]$ ranging in $H$ is called \emph{continuous} provided the path
\[
t \in R \mapsto [\vr(t, \omega), \vm(t, \omega)] \in C(R; H) \ \mbox{for a.a.}\ \omega \in \Omega.
\]
\end{itemize}

Alternatively, we may define a continuous (measurable, stochastic) process $[\vr, \vm]$ as a measurable mapping 
\[
[\vr, \vm]: \Omega \to C_{\rm loc}(R; H).
\]
Note that this is equivalent to the previous definition as the measurability of the process follows from the continuity of paths. Indeed, approximate $[\vr,\vm]$ by a process with piecewise constant paths, e.g.,
$$
[\vr,\vc{m}]_{n}(\omega,t)=[\vr,\vc{m}](\omega,t^{n}_{k}) \ \text{if}\ 
						t\in[t^{n}_{k},t^{n}_{k+1}),
$$
where $\{t^{n}_{k};k\in\mathbb{Z}\}$ is a suitable partition of $R$ with vanishing mesh size as $n\to\infty$. Then $[\vr,\vc{m}]_{n}$ is a measurable process  converging pointwise to $[\vr,\vm]$. Hence $[\vr,\vm]$ is measurable.

It is not always  convenient to work with stochastic processes as collections of random variables. Therefore, for the purposes of this paper, we make use of the last definition above, i.e., a stochastic process is a random variable taking values in the corresponding path space.

Let us now proceed with the definition of statistical solution and stationary statistical solution to the Navier--Stokes system.

\begin{Definition}[Statistical solution] \label{ID1}

A continuous stochastic process $[\vr, \vm]$ ranging in $L^{1}(Q) \times W^{-k,2}(Q; R^3)$
is a \emph{statistical solution} of the Navier--Stokes system if 
\[
[\vr, \vm] \ \mbox{is a finite energy weak solution of the Navier--Stokes system in}\ R \times Q 
\]
$\mathcal{P}-$a.s.

\end{Definition}

The probability space on which a statistical solution is defined does not play any particular role for us, since  the Navier--Stokes system is deterministic. Therefore,  we tacitly consider the probability space as part of the solution. Strictly speaking, a statistical solution is then a probability space together with a process satisfying the requirements of Definition~\ref{ID1}.
Statistical solutions are therefore conveniently described by their  laws. In particular, every statistical solution $[\vr,\vm]$ gives rise to a Borel probability measure on $\mathcal{T}$, i.e. its probability law, which is given as the pushforward measure of the mapping
$
[\vr,\vm]:\Omega\to\mathcal{T}.
$
We denote by $\mathfrak{P}(\mathcal{T})$ the set of Borel probability measures on $\mathcal{T}$ and for a continuous stochastic process $[\vr,\vm]$, we denote by $\mathcal{L}[\vr,\vm]$ its probability law, i.e., the Borel probability measure on $\mathcal{T}$ satisfying
$$
\mathcal{L}[\vr,\vm](B)=\mathcal{P}\left([\vr,\vm]\in B \right)\ \mbox{for any}\ B\subset\mathcal{T}\ \mbox{Borel.}
$$

Conversely,  statistical solutions in the sense of Definition~\ref{ID1} can be obtained from Borel probability measures supported in the set of finite energy weak solutions to the Navier--Stokes system.

\begin{Lemma}\label{lem:ID1}
Let $\nu\in\mathfrak{P}(\mathcal{T})$ be such that
$$
\supp\nu \subset \big\{[\vr, \vm]\,\big|\  [\vr, \vm] \ \mbox{\rm{is a finite energy weak solution of the Navier--Stokes system in}}\ R \times Q \big\}.
$$
Then there exists  a statistical solution $[\vr,\vm]$ such that $\mathcal{L}[\vr,\vm]=\nu$.
\end{Lemma}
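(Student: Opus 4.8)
The plan is to realize $\nu$ as the law of the \emph{canonical process} on the trajectory space itself. Concretely, I would take as probability space
\[
\big(\Omega, \mathfrak{B}, \mathcal{P}\big)\,=\,\big(\mathcal{T},\,\overline{\mathfrak{B}(\mathcal{T})},\,\overline{\nu}\big),
\]
where $\overline{\nu}$ denotes the completion of $\nu$ and $\overline{\mathfrak{B}(\mathcal{T})}$ the corresponding completed Borel $\sigma$-algebra, and define the process $[\vr,\vm]$ to be the identity (coordinate) map
\[
[\vr,\vm](\omega)\,=\,\omega\,,\qquad \omega=(r,\vc{w})\in\mathcal{T}.
\]
This automatically takes care of the completeness requirement on $\mathcal{P}$.

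Next I would verify the three defining properties of a statistical solution in the sense of Definition~\ref{ID1}. First, every element of $\mathcal{T}$ is by construction a continuous $H$-valued path (with $H=L^1(Q)\times W^{-k,2}(Q;R^3)$), so that $\mathcal{T}$ is canonically identified with a subset of $C_{\rm loc}(R;H)$; the identity map is then a measurable, indeed continuous, mapping $\Omega\to C_{\rm loc}(R;H)$, whence $[\vr,\vm]$ is a continuous stochastic process. Second, the probability law is the pushforward of $\mathcal{P}=\overline{\nu}$ under the identity, so $\mathcal{L}[\vr,\vm](B)=\nu\big(\mathrm{Id}^{-1}(B)\big)=\nu(B)$ for every Borel $B\subset\mathcal{T}$, i.e. $\mathcal{L}[\vr,\vm]=\nu$, as required.

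The only substantive point is the almost-sure property, namely that $\mathcal{P}$-almost every path is a finite energy weak solution. Here I would use that $[\mathcal{T};d_{\mathcal{T}}]$ is Polish, hence separable, for which it is standard that a Borel probability measure charges its support fully: the complement of $\supp\nu$ is open and, by separability, is a \emph{countable} union of open $\nu$-null sets, so $\nu(\supp\nu)=1$. By hypothesis $\supp\nu$ is contained in the set of finite energy weak solutions, and therefore $\supp\nu$ is a set of full $\mathcal{P}$-measure on which the identity process $[\vr,\vm]$ is a solution. Note that measurability of the full solution set is not even needed: it suffices to exhibit the single full-measure (closed) set $\supp\nu$ on which the defining property holds.

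I do not expect any serious obstacle, as the construction is the canonical one and every verification is soft. The most delicate bookkeeping is simply to invoke separability of $\mathcal{T}$ correctly, which is exactly what upgrades the support hypothesis $\supp\nu\subset\{\text{solutions}\}$ into the required almost-sure statement $\nu(\supp\nu)=1$, and to handle the completion of $\nu$ so as to meet the completeness requirement built into the definition of a statistical solution.
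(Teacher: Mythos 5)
Your proposal is correct and follows exactly the paper's route: the paper's proof is the one-line canonical-process construction, taking $\Omega=\mathcal{T}$ with $\mathcal{P}=\nu$ and the coordinate map as the process. You merely spell out the details the paper leaves implicit (the completion of $\nu$ and the fact that $\nu(\supp\nu)=1$ by separability of the Polish space $\mathcal{T}$), and these verifications are all sound.
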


\begin{proof}
Let $\Omega:=\mathcal{T}$ be equipped with its Borel $\sigma$-algebra and the probability measure  $\mathcal{P}:=\nu$. Then the canonical process
$$
[\vr,\vm](\omega,t):=\omega(t),\ \omega\in\Omega,t\in R,
$$
is a statistical solution in the sense of Definition~\ref{ID1} whose law under $\mathcal{P}$ is $\nu$.
\end{proof}

\begin{Remark}[Connection to Markov statistical solutions from \cite{FanFei}]\label{rem:FF}
Another notion of statistical solution was introduced in \cite{FanFei}.
There, a statistical solution is family of Markov operators $\{M_{t}\}_{t\geq 0}$ acting on the set of probability measures $\mathfrak{P}(\mathcal{D})$ where $\mathcal{D}$ is the corresponding
set of input data, i.e., initial and boundary conditions. Since uniqueness for the Navier--Stokes system is an open problem, the two notions of solution are not equivalent. More precisely,
the Markov statistical solution in the sense of  \cite{FanFei} was constructed by means of a selection procedure in the spirit of Krylov~\cite{KrylNV}, see also
\cite{BrFeHo2018C,BreFeiHof19C,BreFeiHof19} for  related results on Markov and semiflow  selections. Every such solution gives rise to a statistical solution in the sense of Definition~\ref{ID1}
through the procedure by Da~Prato and Zabczyk~\cite[Section 2.2]{DPZ96}. On the other hand,  not every statistical solution in the sense of Definition~\ref{ID1} satisfies the corresponding Markov
property as this is a priori a property of systems with uniqueness, or alternatively it stems from a suitable selection.
\end{Remark}

As the next step, we define stationary statistical solution which is the main object of interest of this paper. The notion is similar to the corresponding SPDE setting, see \cite{BrFeHo2017}. In the sequel, we use the same notation $\mathcal{L}(X)$ for the probability law of a general random variable $X:\Omega\to \mathcal{X}$ taking values in the target space $\mathcal{X}$. In our setting $\mathcal{X}$ will always be a Polish space.

\begin{Definition} [Stationary statistical solution] \label{ID2}

A statistical solution $[\vr, \vm]$ of the Navier--Stokes system is \emph{stationary} if it is a stationary process, meaning the joint laws 
\[
\mathcal{L}\left( [\vr(t_1, \cdot), \vm(t_1, \cdot)], \dots, [\vr(t_n, \cdot), \vm(t_n, \cdot)] \right),
\]
\[  
\mathcal{L}\left( [\vr(t_1 + \tau , \cdot), \vm(t_1 + \tau , \cdot)], \dots, [\vr(t_n + \tau, \cdot), \vm(t_n + \tau, \cdot)] \right)
\]
coincide for any $t_1 < \dots < t_n$ and any $\tau \in R$. Equivalently,  we may say that 
the law on trajectories
$
\mathcal{L} [\vr, \vm] \in \mathfrak{P} (\mathcal{T}) 
$
is shift invariant, meaning, $\mathcal{L} [\vr, \vm]=\mathcal{L} (S_{\tau}[\vr, \vm])$ for all $\tau\in R$.
\end{Definition}

Note that  $\mathcal{L}\left( [\vr(t_1, \cdot), \vm(t_1, \cdot)], \dots, [\vr(t_n, \cdot), \vm(t_n, \cdot)] \right)$ is the pushforward measure of the random vector
$$
\left( [\vr(t_1, \cdot), \vm(t_1, \cdot)], \dots, [\vr(t_n, \cdot), \vm(t_n, \cdot)] \right)
$$
which takes values in $[L^{1}(Q)\times W^{-k,2}(Q;R)]^{n}$, and it is necessary to include an arbitrary number of times $t_{1},\dots,t_{n}$ in the definition. On the other hand, $\mathcal{L}[\vr,\vm]$ is the pushforward measure of the whole process $[\vr,\vm]:\Omega\to\mathcal{T}$.
The equivalence of the two above formulations of stationarity was proved in \cite[Section~2.11]{BrFeHobook}, see in particular Lemma 2.11.7 and Lemma 2.11.5.

We also remark  that, in the language of dynamical systems, a stationary statistical solution $[\vr,\vm]$ generates a measure--preserving dynamical system through
$$
(\mathcal{T},\mathfrak{B}[\mathcal{T}],(S_{\tau})_{\tau\in R},\mathcal{L}[\vr,\vm]).
$$
In other words, the probability measure $\mathcal{L}[\vr,\vm]$ is invariant under the transformations $(S_{\tau})_{\tau\in R}$.

\subsection{Existence of stationary statistical solutions} \label{ss:ex-stat}

The next result asserts the existence of at least one stationary statistical solution provided the Navier--Stokes system admits 
a global solution with bounded energy. 

\begin{Theorem}[Existence of stationary statistical solution] \label{IT1}

Let  $\mathcal{A} \subset \mathcal{U}[\Ov{E}] \subset \mathcal{T}$ be a non-empty shift invariant set of trajectories.

Then there exists a stationary statistical solution $[\vr, \vm]$ such that
\[
[\vr, \vm] \in \Ov{\mathcal{A}} \subseteq \mathcal{U}[\Ov{E}] \ \mbox{a.s.}
\]
In particular, for any finite energy weak solution $[\tvr, \tvm]$ in $(0, \infty)$ such that
\[
\limsup_{t \to \infty} 
\intO{ E \left( \tvr, \tvm \Big| \vu_b \right)(t, \cdot) } < \infty  ,
\]
there exists a stationary statistical solution $[\vr, \vm]$ 
such that 
\[
[\vr, \vm] \in \omega[ \tvr, \tvm] \ \mbox{a.s.}
\]

\end{Theorem}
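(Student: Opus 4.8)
The plan is to run the classical Krylov--Bogolyubov construction on the compact metric space $\left[\mathcal{U}[\Ov E]; d_{\mathcal{T}}\right]$, using the compactness supplied by Theorem~\ref{aP1} (equivalently Proposition~\ref{TP1}) together with the continuity and group structure of the shift operators $(S_\tau)_{\tau \in R}$. First I would fix an arbitrary $[\vr_0, \vm_0] \in \mathcal{A}$, which is possible since $\mathcal{A} \ne \emptyset$. As $\mathcal{A}$ is shift invariant (hence $S_\tau \mathcal{A} = \mathcal{A}$), the forward orbit $\{ S_\tau [\vr_0, \vm_0] : \tau \ge 0 \}$ stays inside $\mathcal{A} \subset \Ov{\mathcal{A}} \subseteq \mathcal{U}[\Ov E]$, and $\tau \mapsto S_\tau[\vr_0, \vm_0]$ is continuous; therefore the ergodic averages
\[
\nu_T \equiv \frac{1}{T} \int_0^T \delta_{S_\tau [\vr_0, \vm_0]} \,\D\tau \in \mathfrak{P}(\mathcal{T})
\]
are well-defined Borel probability measures, each concentrated on the closed set $\Ov{\mathcal{A}}$.

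The second step is to extract a narrow limit and identify its invariance. Since $\Ov{\mathcal{A}}$ is a closed subset of the compact set $\mathcal{U}[\Ov E]$ (Proposition~\ref{TP1}), it is itself compact, so the family $\{ \nu_T \}_{T > 0}$ is trivially tight; by Prokhorov's theorem there are $T_n \to \infty$ and $\nu \in \mathfrak{P}(\mathcal{T})$ with $\nu_{T_n} \to \nu$ narrowly. Because each $\nu_{T_n}(\Ov{\mathcal{A}}) = 1$ and $\Ov{\mathcal{A}}$ is closed, the portmanteau theorem gives $\nu(\Ov{\mathcal{A}}) \ge \limsup_n \nu_{T_n}(\Ov{\mathcal{A}}) = 1$, whence $\supp \nu \subseteq \Ov{\mathcal{A}}$. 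For shift invariance, fix $\sigma \in R$ and a bounded continuous $F$ on $\mathcal{T}$; a telescoping of the defining time integral yields
\[
\left| \int_{\mathcal{T}} F \circ S_\sigma \,\D\nu_T - \int_{\mathcal{T}} F \,\D\nu_T \right| \le \frac{2 |\sigma| \, \| F \|_\infty}{T} \longrightarrow 0,
\]
and since $F \circ S_\sigma$ is again bounded continuous (the shifts being continuous), passing to the limit along $T_n$ gives $\int F \circ S_\sigma \,\D\nu = \int F \,\D\nu$, i.e. $(S_\sigma)_\# \nu = \nu$ for every $\sigma \in R$.

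Finally, I would realize $\nu$ as the law of a stationary statistical solution. As $\supp \nu \subseteq \Ov{\mathcal{A}} \subseteq \mathcal{U}[\Ov E]$, the measure $\nu$ is supported on finite energy weak solutions in $R \times Q$, so Lemma~\ref{lem:ID1} furnishes a statistical solution $[\vr, \vm]$ with $\mathcal{L}[\vr, \vm] = \nu$, satisfying $[\vr, \vm] \in \Ov{\mathcal{A}} \subseteq \mathcal{U}[\Ov E]$ a.s.; shift invariance of $\nu$ is precisely stationarity in the sense of Definition~\ref{ID2}. For the ``in particular'' assertion, given $[\tvr, \tvm]$ on $(0, \infty)$ with $\limsup_{t \to \infty} \intO{ E \left( \tvr, \tvm \Big| \vu_b \right)(t, \cdot) } < \infty$, I would choose $\Ov E$ strictly above this $\limsup$ and shift the solution past a time after which its energy stays below $\Ov E$; Corollary~\ref{TP2} then shows $\omega[\tvr, \tvm]$ is a non-empty, shift invariant, closed subset of $\mathcal{U}[\Ov E]$, and applying the first part with $\mathcal{A} = \omega[\tvr, \tvm]$ (so that $\Ov{\mathcal{A}} = \omega[\tvr, \tvm]$) completes the argument. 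All the genuine difficulty is already absorbed into the compactness statement of Theorem~\ref{aP1}; the only remaining care is the bookkeeping that narrow limits preserve concentration on the solution set and that invariance of the law coincides with process stationarity, which is exactly what Lemma~\ref{lem:ID1} and Definition~\ref{ID2} provide.
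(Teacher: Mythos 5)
Your proposal is correct and follows essentially the same route as the paper: the Krylov--Bogolyubov averaging on the compact set $\Ov{\mathcal{A}}$, tightness via Prokhorov, support preservation by the portmanteau inequality for closed sets, shift invariance by the telescoping estimate, and realization of the limit law through Lemma~\ref{lem:ID1}, with the second assertion reduced to the first via Corollary~\ref{TP2} applied to $\mathcal{A}=\omega[\tvr,\tvm]$. The only difference is cosmetic: the paper additionally remarks that bare existence of an invariant measure could be obtained from the Markov--Kakutani fixed point theorem before giving the same constructive argument you propose.
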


\begin{proof}
The second statement is a consequence of the first one with $\mathcal{A}=\omega[\tvr,\tvm]$. Indeed, according to Corollary~\ref{TP2}, the $\omega$--limit set $\omega[\tvr,\tvm]$ is non-empty,
closed and shift invariant.
 In order to prove the first statement, we observe that in view of Lemma~\ref{lem:ID1}, it is sufficient to find a shift invariant probability measure $\nu\in\mathfrak{P}(\mathcal{T})$ which is supported by $\Ov{\mathcal{A}}$.
Due to the compactness of the set $\Ov{\mathcal{A}}$, the existence of such a measure follows immediately from Markov--Kakutani's fixed point theorem \cite[Theorem V.20]{ReSi80}.
Nevertheless, let us present a  constructive proof based on the so-called Krylov--Bogolyubov argument, which gives more insight into the properties of the measure.

By virtue of Proposition \ref{aP1}, the 
set $\Ov{\mathcal{A}} \subseteq \mathcal{U}[\Ov{E}]$ is a compact shift invariant subset of the Polish space $\mathcal{T}$. Thus we may define
a family of probability measures 
\begin{equation}\label{eq:I1}
T \mapsto \nu_T \equiv \frac{1}{T} \int_0^T \delta_{S_t(\tvr, \tvm)} \dt  \in \mathfrak{P}(\mathcal{T}) 
\end{equation}
for an arbitrary $(\tvr, \tvm) \in \mathcal{A}$. Due to shift invariance of $\mathcal{A}$, the approximate measures \eqref{eq:I1} are all supported in the compact subset $\Ov{\mathcal{A}}$ of the Polish space $\mathcal{T}$. Hence the family is tight and by Prokhorov's theorem there is a subsequence $T_{n}\to\infty$ and a probability measure $\nu\in\mathfrak{P}(\mathcal{T})$ such that
\[ 
\nu_{T_n}\,\equiv\frac{1}{T_n} \int_0^{T_n} \delta_{S_t(\tvr, \tvm)} \dt \to \nu 
\ \mbox{narrowly in}\ 
\mathfrak{P}(\mathcal{T}),
\]
meaning 
\begin{equation} \label{I1}
\frac{1}{T_n} \int_0^{T_n} G (\tvr(\cdot + t) , \tvm(\cdot + t)) \dt\to \int_{\mathcal{T}} G(r, \vc{w}) \D \nu (r, \vc{w})
\end{equation}
for any $G \in BC(\mathcal{T})$.

Moreover, since the approximate sequence was supported in $\Ov{\mc A}$, also the limit measure $\nu$ is supported on $\Ov{\mc A}$. Indeed, the narrow convergence  $\nu_{T_{n}}\to \nu$ is equivalent to
$$
\limsup_{n\to\infty}\nu_{T_{n}}(\mathcal C)\leq \nu(\mathcal C)\ \mbox{for any closed set}\ \mathcal C\subset\mathcal T.
$$
And taking $\mathcal{C}=\Ov{\mathcal{A}}$ implies
$
\nu(\Ov{\mathcal{A}})=1.
$

Let us show that  $\nu$ is shift invariant. To this end, let $G\in BC(\mathcal{T})$ and $\tau\in R$. According to the continuity of the translation operator $S_{\tau}:\mathcal{T}\to \mathcal{T}$ we deduce that $G\circ S_{\tau}\in BC(\mathcal{T})$. Hence
$$
\int_{\mathcal{T}} G\circ S_{\tau}(r, \vc{w}) \D \nu (r, \vc{w})= \lim_{n\to\infty}\frac{1}{T_{n}}\int_{0}^{T_{n}} G(\tilde\vr(\cdot+\tau+t), \tilde\vm(\cdot+\tau+t)) \,\D t
$$
$$
=\lim_{n\to\infty}\frac{1}{T_{n}}\int_{\tau}^{\tau+T_{n}} G(\tilde\vr(\cdot+s), \tilde\vm(\cdot+s)) \,\D s
$$
$$
=\lim_{n\to\infty}\frac{1}{T_{n}}\int_{0}^{T_{n}} G(\tilde\vr(\cdot+s), \tilde\vm(\cdot+s)) \,\D s
$$
$$
+\lim_{n\to\infty}\frac{1}{T_{n}}\left[\int_{T_{n}}^{\tau+T_{n}} G(\tilde\vr(\cdot+s), \tilde\vm(\cdot+s)) \,\D s-\int_{0}^{\tau} G(\tilde\vr(\cdot+s), \tilde\vm(\cdot+s)) \,\D s\right].
$$
By boundedness of $G$, the last line above vanishes. Therefore in view of \eqref{I1} we deduce that
$$
\int_{\mathcal{T}} G\circ S_{\tau}(r, \vc{w}) \D \nu (r, \vc{w})=\int_{\mathcal{T}}G(r,\vc{w}) \,\D \nu(r,\vc{w})
$$
so $\nu$ is  shift invariant.

As suggested at the beginning of the proof, we may conclude by the application of Lemma~\ref{lem:ID1}.
\end{proof}

\subsection{Continuity of the total energy} \label{ss:cont-en}

The total energy 
\[
\intO{ E \left(\vr, \vm \Big| \vu_b \right) }
\]
is a Borel measurable function on the Banach space $L^1(Q) \times W^{-k,2}(Q; R^3)$ and as such an observable quantity of any 
statistical solution.
Similarly, for a statistical solution $[\vr, \vm]$ we introduce the quantities
\[
\begin{split}
\mathcal{E}(t-) &= \lim_{\tau \to 0+} \frac{1}{\tau} {\int_{t - \tau}^t }\left( \intO{ E \left(\vr(s,\cdot), \vm(s,\cdot) \Big| \vu_b \right) } \right) \D s
\\
\mathcal{E}(t+) &= \lim_{\tau \to 0+} \frac{1}{\tau} {\int_t^{t + \tau}} \left( \intO{ E \left(\vr(s,\cdot), \vm(s,\cdot) \Big| \vu_b \right) } \right) \D s 
\end{split}
\]
and 
\[
\mathcal{E}(t) = \intO{ E \left(\vr(t, \cdot), \vm(t, \cdot) \Big| \vu_b \right) }.
\]
As $[\vr, \vm]$ satisfies the energy inequality \eqref{w5}, all the above quantities are a.s. well defined for any $t \in R$, and according to weak lower semicontinuity of the total energy, it holds a.s.
\begin{equation}\label{eq:E1}
\mathcal{E}(t)\leq\liminf_{s\to t}\mathcal{E}(s) \ \mbox{for all}\ t \in R,
\end{equation}
hence a.s.
\begin{equation} \label{s1}
\mathcal{E}(t) \leq \mathcal{E}(t+) ,\ \mathcal{E}(t) \leq \mathcal{E}(t-) \ \mbox{for all}\ t \in R. 
\end{equation}
Note that here the corresponding set of full probability does not depend on $t$ as it is precisely the set of full probability where $[\vr,\vm]$ solves the Navier--Stokes system. 
 We recall that equality in both inequalities in \eqref{s1} implies continuity of the solution $[\vr, \vm]$ at the time $t$ in the strong topology of the space $L^\gamma(Q) \times L^{\frac{2\gamma}{\gamma + 1}}(Q; R^{3})$. The following result shows that for a stationary solution 
\eqref{s1} this holds a.s. for any fixed $t \in R$.

\begin{Theorem}[Strong continuity of stationary statistical solution] \label{sL1}

Let $[\vr, \vm]$ be a stationary statistical solution of the Navier--Stokes system. 

Then for any $t \in R$ we have a.s. 
\[
\mathcal{E}(t) = \mathcal{E}(t+) = \mathcal{E}(t-).
\]

\end{Theorem}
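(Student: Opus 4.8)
The plan is to compare the three quantities $\mathcal{E}(t)$, $\mathcal{E}(t+)$, $\mathcal{E}(t-)$ first at the level of expectations, exploiting stationarity, and then to upgrade the resulting equality of means to an almost sure identity by means of the one-sided bounds \eqref{s1}. Throughout I fix $t\in R$ and recall that, since the pressure potential is nonnegative, the integrand of the energy is nonnegative, so $\mathcal{E}(s)\geq0$ for every $s$. Because $\mathcal{E}(s)$ is a fixed Borel (indeed lower semicontinuous) functional of the single-time marginal $[\vr(s,\cdot),\vm(s,\cdot)]$, stationarity of $[\vr,\vm]$ (Definition~\ref{ID2} with $n=1$) implies that the law of $\mathcal{E}(s)$, and in particular its mean $\Exp[\mathcal{E}(s)]=:\mathfrak{e}\in[0,\infty]$, does not depend on $s$.

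First I would establish $\Exp[\mathcal{E}(t\pm)]\geq\mathfrak{e}$. This is immediate from \eqref{s1}: the bounds $\mathcal{E}(t)\leq\mathcal{E}(t+)$ and $\mathcal{E}(t)\leq\mathcal{E}(t-)$ hold almost surely (on the set of full measure where $[\vr,\vm]$ solves the system), so taking expectations and using $\Exp[\mathcal{E}(t)]=\mathfrak{e}$ gives the claim. For the reverse inequality I would use the defining averages together with Tonelli's theorem: for every $\tau>0$,
\[
\Exp\left[\frac1\tau\int_t^{t+\tau}\mathcal{E}(s)\,\D s\right]=\frac1\tau\int_t^{t+\tau}\Exp[\mathcal{E}(s)]\,\D s=\mathfrak{e},
\]
and likewise for the left averages. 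Since these averages converge almost surely to $\mathcal{E}(t+)$ (resp. $\mathcal{E}(t-)$) as $\tau\to0+$ — the limits being a.s. well defined for a statistical solution — and since they are nonnegative, Fatou's lemma applied along a sequence $\tau_n\to0+$ yields $\Exp[\mathcal{E}(t+)]\leq\liminf_{n}\mathfrak{e}=\mathfrak{e}$, and similarly $\Exp[\mathcal{E}(t-)]\leq\mathfrak{e}$.

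Combining the two directions gives $\Exp[\mathcal{E}(t+)]=\Exp[\mathcal{E}(t-)]=\Exp[\mathcal{E}(t)]=\mathfrak{e}$. Provided $\mathfrak{e}<\infty$, the almost sure inequalities $\mathcal{E}(t+)-\mathcal{E}(t)\geq0$ and $\mathcal{E}(t-)-\mathcal{E}(t)\geq0$ then have vanishing expectation, whence $\mathcal{E}(t+)=\mathcal{E}(t)$ and $\mathcal{E}(t-)=\mathcal{E}(t)$ almost surely, which is exactly the assertion. I expect the single genuine obstacle to be securing the integrability $\mathfrak{e}=\Exp[\mathcal{E}(0)]<\infty$, since without it the final ``zero-mean nonnegative variable'' step is void; the stationarity and Fatou arguments are otherwise routine. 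For the stationary statistical solutions relevant here this is harmless, because those produced by Theorem~\ref{IT1} are supported in $\mathcal{U}[\Ov{E}]$, so that $\mathcal{E}\leq\Ov{E}$ a.s. and hence $\mathfrak{e}\leq\Ov{E}<\infty$.
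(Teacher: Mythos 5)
Your argument in the integrable case is essentially the paper's: both proofs reduce the a.s.\ identity to the statement that the nonnegative random variables $\mathcal{E}(t\pm)-\mathcal{E}(t)$ have zero expectation, which follows from stationarity of the one-time marginals together with an interchange of expectation and the $\tau\to0+$ limit of the time averages. Your Tonelli-plus-Fatou justification of $\Exp[\mathcal{E}(t\pm)]\leq\mathfrak{e}$ is, if anything, a more careful version of the interchange that the paper performs without comment, and the combination with the one-sided bounds \eqref{s1} to upgrade equality of means to an a.s.\ identity is exactly the paper's mechanism.

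The genuine gap is the one you flagged yourself and then waved away: Theorem~\ref{sL1} is stated for an \emph{arbitrary} stationary statistical solution in the sense of Definition~\ref{ID2}, and nothing in that definition forces $\Exp[\mathcal{E}(0)]<\infty$. Restricting attention to the solutions produced by Theorem~\ref{IT1}, which are supported in $\mathcal{U}[\Ov{E}]$, does not prove the theorem as stated, and the surrounding results (e.g.\ Theorems~\ref{AT1} and \ref{ACC1}) are likewise formulated for general stationary solutions. The paper closes this case with a short truncation argument that your proof is missing: choose a bounded, smooth, \emph{strictly increasing} $\beta:[0,\infty)\to[0,\infty)$ with $\beta(0)=0$. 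Since $\beta$ is increasing and continuous, the lower semicontinuity \eqref{eq:E1} transfers to $\beta(\mathcal{E})$, so the analogue of \eqref{s1} holds with $\mathcal{E}$ replaced by $\beta(\mathcal{E})$ and with the one-sided quantities redefined as limits of averages of $\beta(\mathcal{E}(s))$; as $\beta(\mathcal{E})$ is bounded, hence integrable, your expectation argument applies to it verbatim and strict monotonicity of $\beta$ lets one read off the conclusion. Adding that one paragraph would make your proof coincide with the paper's in full generality.
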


\begin{proof}
Let us first assume that $\Exp[\mathcal{E}(0)]<\infty$ which by stationarity implies  $\Exp[\mathcal{E}(t)]<\infty$ for all $t\in R$.
Given $t \in R$, consider the random variable 
\[
\frac{1}{2} \left[ \mathcal{E}(t+) + \mathcal{E}(t-) \right] - \mathcal{E}(t) 
= \lim_{\tau \to 0+} \frac{1}{2 \tau} \int_{t - \tau}^{t + \tau} \left( \intO{ E \left(\vr, \vm \Big| \vu_b \right) } \right) \D s 
- \mathcal{E}(t) \geq 0 \ \mbox{a.s.}
\]
Passing to expectations and using stationarity, we deduce
\[ 
\Exp \left[ \frac{1}{2} \left[ \mathcal{E}(t+) + \mathcal{E}(t-) \right] - \mathcal{E}(t) \right] 
= \lim_{\tau \to 0+} \frac{1}{2 \tau} \int_{t - \tau}^{t + \tau} 
\Exp \left[  \intO{ E \left(\vr, \vm \Big| \vu_b \right) } \right]  \D s 
- \Exp [ \mathcal{E}(t) ] = 0.
\]
This implies that
$$
\frac{1}{2} \left[ \mathcal{E}(t+) + \mathcal{E}(t-) \right] = \mathcal{E}(t)  \ \mbox{a.s.}
$$
and the claim follows due to \eqref{s1}.

If the total energy is not  in $L^{1}(\Omega)$, we include a suitable cut-off function. Namely, let $\beta:[0,\infty)\to[0,\infty)$ be a strictly increasing, smooth and bounded function such that $\beta(0)=0$. Then \eqref{eq:E1} implies
$$
\beta(\mathcal{E}(t))\leq \liminf_{s\to t}\beta(\mathcal{E}(s)) \ \mbox{for all}\ t \in R,
$$
and defining analogously
$$
\beta(\mathcal{E}(t-) )= \lim_{\tau \to 0+} \frac{1}{\tau} {\int_{t - \tau}^t }\beta( \mathcal{E}(s))\, \D s,\qquad
\beta(\mathcal{E}(t+)) = \lim_{\tau \to 0+} \frac{1}{\tau} {\int_t^{t + \tau}} \beta( \mathcal{E}(s))\,\D s 
$$
we deduce that a.s.
\begin{equation*}
\beta(\mathcal{E}(t)) \leq \beta(\mathcal{E}(t+)) ,\ \beta(\mathcal{E}(t)) \leq \beta(\mathcal{E}(t-)) \ \mbox{for all}\ t \in R. 
\end{equation*}
Hence, repeating the above with $\mathcal{E}$ replaced by $\beta(\mathcal{E})$ completes the proof.
\end{proof}

\begin{Remark}
Note that the set of zero probability in the statement of Theorem~\ref{sL1} possibly depends on $t$. In other words, while  at every time $t$ the energy is a.s. continuous, it does not follow that a.e. trajectory is continuous.
\end{Remark}

\subsection{Recurrent solutions}\label{ss:R}

With the existence of an invariant measure at hand, we are able to deduce existence of a recurrent  solution.

\begin{Corollary}\label{cor:rec}
Assume there exists a solution $[\vr,\vm]$ with uniformly bounded energy. Then there exists a \emph{recurrent} solution $[r,\vc{w}]\in\omega[\vr,\vm]$, meaning, $[r,\vc{w}]$ belongs to its own $\omega$--limit set, i.e., $[r,\vc{w}]\in\omega[r,\vc{w}].$ In particular, for  every $\varepsilon>0$ there exists  $T>0$ such that
$$
d_{\mathcal{T}}\Big[[r,\vc{w}](\cdot+T);[r,\vc{w}](\cdot)\Big]<\varepsilon.
$$
\end{Corollary}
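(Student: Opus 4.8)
The plan is to read Corollary~\ref{cor:rec} as a statement about the measure-preserving dynamical system built in the previous subsections, and to deduce recurrence from the classical Poincar\'e recurrence theorem. First I would apply Theorem~\ref{IT1} with $\mathcal{A}=\omega[\vr,\vm]$. By Corollary~\ref{TP2}, the set $\omega[\vr,\vm]$ is non-empty, compact, and shift invariant, so Theorem~\ref{IT1} yields a stationary statistical solution $[r,\vc{w}]$ with $[r,\vc{w}]\in\omega[\vr,\vm]$ a.s.; equivalently, its law $\nu:=\mathcal{L}[r,\vc{w}]\in\mathfrak{P}(\mathcal{T})$ is shift invariant and satisfies $\nu\big(\omega[\vr,\vm]\big)=1$. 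In the language of Section~\ref{I}, $\big(\omega[\vr,\vm],\mathfrak{B},(S_\tau)_{\tau\in R},\nu\big)$ is then a measure-preserving dynamical system on a compact, hence separable, metric space.

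Next I would apply the Poincar\'e recurrence theorem to the time-one map $S_1$, which preserves $\nu$ because the whole flow $(S_\tau)$ does. In its measure-theoretic form, this asserts that for every Borel set $A$, $\nu$-almost every point of $A$ returns to $A$ under the iterates of $S_1$ infinitely often. The one step that is not completely immediate is to upgrade this to pointwise topological recurrence in the metric $d_{\mathcal{T}}$. To do this I would fix a countable basis $\{U_i\}_{i}$ of open sets of $\omega[\vr,\vm]$ (available by separability); for each $i$ the set $N_i\subset U_i$ of points failing to return to $U_i$ infinitely often is $\nu$-null, so $N:=\bigcup_i N_i$ is $\nu$-null as well. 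For any $[r,\vc{w}]\notin N$ and any neighborhood $V$ of $[r,\vc{w}]$, choosing $U_i$ with $[r,\vc{w}]\in U_i\subset V$ shows that the $S_1$-orbit of $[r,\vc{w}]$ returns to $V$ infinitely often. Hence there exist integers $n_k\to\infty$ with $S_{n_k}[r,\vc{w}]\to[r,\vc{w}]$ in $[\mathcal{T},d_{\mathcal{T}}]$, which is exactly the statement $[r,\vc{w}]\in\omega[r,\vc{w}]$. Since $\nu\big(\omega[\vr,\vm]\setminus N\big)=1>0$, at least one such recurrent point lives in $\omega[\vr,\vm]$, giving the desired solution.

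Finally, the ``in particular'' clause follows directly from the definition of the $\omega$-limit set: from $[r,\vc{w}]\in\omega[r,\vc{w}]$ we have a sequence $T_n:=n_k\to\infty$ with $d_{\mathcal{T}}\big([r,\vc{w}](\cdot+T_n);[r,\vc{w}](\cdot)\big)\to0$, so given $\varepsilon>0$ it suffices to take $T=T_n$ large enough to ensure $d_{\mathcal{T}}\big([r,\vc{w}](\cdot+T);[r,\vc{w}](\cdot)\big)<\varepsilon$.

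I expect the main (and only non-routine) obstacle to be the passage from measure-theoretic recurrence, which only guarantees return to a \emph{fixed} Borel set, to genuine pointwise recurrence in the metric $d_{\mathcal{T}}$; this is precisely what the countable-basis argument above resolves, and it relies on the separability of the Polish space $\mathcal{T}$. All remaining ingredients---invariance of $\nu$, its support on $\omega[\vr,\vm]$, and compactness of $\omega[\vr,\vm]$---are already supplied by Theorem~\ref{IT1} and Corollary~\ref{TP2}.
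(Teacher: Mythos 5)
Your proposal is correct and follows essentially the same route as the paper: reduce to the discrete-time map $S_1$, invoke Poincar\'e recurrence for the invariant measure supplied by Theorem~\ref{IT1} on $\omega[\vr,\vm]$, and conclude that $\nu$-a.e.\ point is recurrent. The only difference is that you spell out the countable-basis upgrade from measure-theoretic to pointwise topological recurrence, which the paper delegates to the cited version of Poincar\'e's theorem in Ma\~n\'e's book.
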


\begin{proof}
A simple reduction to the discrete time setting permits to apply Poincar\'e's recurrence theorem \cite[Theorem I.2.3]{Ma87}. It yields  that for every invariant measure $\nu$ the set of recurrent points is of full measure. Since by Theorem~\ref{IT1} there exists an invariant measure supported on the $\omega$--limit set $\omega[\vr,\vm]$ whenever $[\vr,\vm]$ is a finite energy weak solution with \eqref{T1}, it follows that every $\omega[\vr,\vm]$ contains at least one recurrent solution $[r,\vc{w}]$.
\end{proof}

\section{Ergodic theory, application of Birkhoff--Khinchin Theorem}
\label{A}

At this level, it is convenient to work in the abstract setting. Recall that a stationary statistical solution $[\vr, \vm]$ can be identified with a stationary 
process ranging in $H$ with continuous paths, or, equivalently with its law $\mathcal{L}[\vr, \vm]$, 
a shift invariant Borel probability measure on the space of trajectories $[\mathcal{T}, d_{\mathcal{T}}]$.

\subsection{Birkhoff--Khinchin theorem} \label{ss:BK}

For a stationary statistical solution $[\vr, \vm]$, we consider the ergodic limit 
\[
\lim_{T \to \infty} \frac{1}{T} \int_0^T F(\vr(t, \cdot), \vm(t, \cdot)) \dt, 
\]
where $F: H \to R$ is a bounded Borel measurable function. The following result is a straightforward application of Birkhoff--Khinchin 
ergodic theorem.

\begin{Theorem}[Ergodic property] \label{AT1}
Let $[\vr, \vm]$ be a stationary statistical solution of the Navier--Stokes system, and let $F$ be a bounded Borel measurable function on $H$.

Then there is an observable\footnote{A measurable function $\Ov{F}:\Omega\to R$ is called observable.} function $\Ov{F}$ such that
\[
\frac{1}{T} \int_0^T F(\vr(t, \cdot), \vm(t, \cdot)) \dt \to \Ov{F} \ \mbox{as}\ T \to \infty\ \mbox{a.s.}
\]
In particular,
\[
\Exp \left[\left| \frac{1}{T} \int_0^T F(\vr(t, \cdot), \vm(t, \cdot)) \dt - \Ov{F} \right|\right] \to 0 
\ \mbox{as}\ T \to \infty. 
\]

\end{Theorem}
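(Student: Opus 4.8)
The plan is to recast the statement as an instance of the continuous--time Birkhoff--Khinchin ergodic theorem applied to the measure--preserving dynamical system $(\mathcal{T}, \mathfrak{B}[\mathcal{T}], (S_\tau)_{\tau \in R}, \nu)$ generated by the stationary statistical solution, where $\nu = \mathcal{L}[\vr,\vm]$ is the shift--invariant law on the trajectory space. First I would introduce the time--zero evaluation map $\pi_0 : \mathcal{T} \to H$, $\pi_0(r, \vc{w}) = [r(0, \cdot), \vc{w}(0, \cdot)]$, which is continuous, and set $g := F \circ \pi_0$. Since $F$ is bounded Borel on $H$ and $\pi_0$ is continuous, $g$ is a bounded Borel function on $\mathcal{T}$, hence $g \in L^1(\nu)$. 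The key algebraic identity is $g(S_t(r,\vc{w})) = F(r(t, \cdot), \vc{w}(t, \cdot))$, so that the ergodic average in the statement is exactly the Birkhoff time average $\frac{1}{T} \int_0^T g(S_t \cdot)\, \dt$ of the observable $g$ along the flow.

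Before invoking the ergodic theorem I would check that these time averages are genuinely well defined random variables. For $\nu$--a.e. trajectory the path $t \mapsto [r(t,\cdot), \vc{w}(t, \cdot)] \in H$ is continuous, so $t \mapsto g(S_t(r, \vc{w}))$ is the composition of a continuous curve with the Borel map $F$ and is therefore Borel measurable in $t$; together with the continuity of the shifts $S_\tau$ in $\mathcal{T}$ this yields joint measurability of $(t, (r,\vc{w})) \mapsto g(S_t(r,\vc{w}))$, and boundedness of $g$ guarantees that $\frac{1}{T} \int_0^T g(S_t \cdot)\, \dt$ is a well defined bounded measurable function on $\mathcal{T}$.

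The core step is the continuous--time Birkhoff--Khinchin theorem, which I would either cite directly or reduce to its discrete counterpart. For the reduction, set $h := \int_0^1 g(S_t \cdot)\, \dt \in L^1(\nu)$; since $S_1 : \mathcal{T} \to \mathcal{T}$ is measure preserving, the classical discrete Birkhoff theorem gives $\nu$--a.s. and $L^1(\nu)$ convergence of $\frac{1}{N} \sum_{k=0}^{N-1} h(S_1^k \cdot) = \frac{1}{N} \int_0^N g(S_t \cdot)\, \dt$ to the conditional expectation $\Ov{g}$ of $g$ with respect to the $\sigma$--algebra of shift--invariant Borel subsets of $\mathcal{T}$ (the standard identification of the $S_1$--invariant and flow--invariant $\sigma$--algebras modulo $\nu$--null sets being used here). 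The passage from integer $N$ to real $T \to \infty$ is handled by estimating the remainder over $[\lfloor T \rfloor, T]$, which is bounded by $\tfrac{1}{T}\sup_H |F| \to 0$ using boundedness of $g$.

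Finally I would transport the conclusion back to the original probability space $(\Omega, \mathfrak{B}, \mathcal{P})$. As $\nu$ is the pushforward of $\mathcal{P}$ under $[\vr,\vm] : \Omega \to \mathcal{T}$, any $\nu$--full set pulls back to a $\mathcal{P}$--full set, so defining the observable $\Ov{F} := \Ov{g} \circ [\vr,\vm]$ on $\Omega$ yields the claimed $\mathcal{P}$--a.s. convergence, while the $L^1(\nu)$ convergence gives the stated convergence in expectation. I expect the only genuine technical content, beyond citing the discrete ergodic theorem, to be the two routine but necessary points of the continuous--time version: the joint measurability of the flow composed with $g$ (which rests on path continuity and continuity of the shifts) and the gap--filling estimate for non--integer $T$; the Borel--but--not--continuous nature of $F$ is absorbed painlessly because the paths live in $C(R;H)$.
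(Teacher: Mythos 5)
Your argument is correct, and at bottom it is the same mechanism the paper relies on; the difference is only in how explicitly the machinery is unfolded. The paper's proof of Theorem \ref{AT1} consists of a citation of Kolmogorov's version of the Birkhoff--Khinchin theorem for stationary stochastic processes for the a.s.\ convergence, followed by the observation that boundedness of $F$, stationarity and dominated convergence upgrade this to convergence in $L^1(\Omega)$. You instead make the reduction explicit: you pass to the canonical measure--preserving system $(\mathcal{T},\mathfrak{B}[\mathcal{T}],(S_\tau)_{\tau\in R},\mathcal{L}[\vr,\vm])$, set $g=F\circ\pi_0$, form $h=\int_0^1 g(S_t\cdot)\,\dt$, apply the discrete Birkhoff theorem to the time-one map, and fill the gap over $[\lfloor T\rfloor,T]$ using $\|F\|_{L^\infty}$. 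This is precisely the discretization the paper itself carries out later in Lemma \ref{ALL1} for the unbounded generalization (Theorem \ref{ACC1}), except that there the non-integer times are handled by a monotone sandwich after splitting into positive and negative parts, whereas your uniform bound $\|F\|_{L^\infty}/T$ is simpler and suffices in the bounded case. Your approach buys a self-contained proof and already exhibits the limit as a conditional expectation with respect to the shift-invariant $\sigma$-algebra (a fact the paper defers to Theorem \ref{AC1}); the paper's buys brevity by outsourcing the continuous-time ergodic theorem to Kolmogorov. All the measurability and transport-to-$\Omega$ points you raise are handled correctly; note only that joint measurability of $(t,\omega)\mapsto g(S_t\omega)$ holds for \emph{every} element of $\mathcal{T}$, not just $\nu$-a.e.\ one, since $\mathcal{T}=C(R;H)$ by definition.
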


\begin{proof}
The first statement concerning the a.s. convergence of the ergodic averages is the  version of  Birkhoff--Khinchin's ergodic theorem for stochastic processes proved by Kolmogorov
\cite[Chapter~39]{Kolmo}.
Next, since
$$
\Exp\left[\left|\frac{1}{T}\int_{0}^{T}F(\vr(t,\cdot),\vm(t,\cdot))\right|\right]\dt \leq \Exp\left[\left| F(\vr(0,\cdot),\vm(0,\cdot))\right|\right]\leq \|F\|_{L^{\infty}(H)},
$$
dominated convergence and stationarity imply the convergence in $L^1(\Omega)$.
\end{proof}

\begin{Remark} \label{Ri5}

As we have seen in Theorem \ref{IT1}, there are stationary solutions supported by $\mathcal{U}[\Ov{E}]$. The total energy 
\[
\mathcal{E} = \intO{ E \left( \vr, \vm \Big| \vu_b \right) } 
\]
is then a bounded Borel measurable function to which the conclusion of Theorem \ref{AT1} applies.

\end{Remark}

Theorem \ref{AT1} can be extended to  more general functions $F$. We proceed through an auxiliary  lemma, whose main idea is due to Kolmogorov \cite[Chapter 39]{Kolmo}.

\begin{Lemma} \label{ALL1}

Let $U: \Omega\times [0, \infty)  \to R$ be a  measurable stationary stochastic process such that 
\[
{\Exp[|U(0)|]<\infty\ \mbox{and}}\ U \in L^1_{\rm loc}[0, \infty) \ a.s.
\]

Then 
\[
\lim_{T \to \infty} \frac{1}{T} \int_0^T U(t) \dt \to \Ov{U} \ \mbox{a.s. and in} \  L^{1}(\Omega).  
\]

\end{Lemma}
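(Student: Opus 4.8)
The plan is to deduce the statement from the bounded case already contained in Theorem~\ref{AT1} (Kolmogorov's form of the Birkhoff--Khinchin theorem) by truncation, absorbing the error through a maximal ergodic inequality. By stationarity I may work on the canonical probability space, on which the time shifts $(S_t)_{t\ge0}$ form a measure-preserving semigroup and $U(t,\cdot)=U(0)\circ S_t$ up to a null set; write $u=U(0)\in L^1(\Omega)$ and let $\mathfrak{I}$ denote the $\sigma$-algebra of shift-invariant sets. For $N\in\N$ set $u_N=\mathrm{sign}(u)\min\{|u|,N\}$ and $r_N=u-u_N$, so that $|r_N|\le|u|$ and $\Exp[|r_N|]\to0$ as $N\to\infty$ by dominated convergence. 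With
\[
A_T=\frac1T\int_0^T U(t)\dt,\qquad A_T^N=\frac1T\int_0^T \big(u_N\circ S_t\big)\dt,
\]
the bounded process $u_N\circ S_t$ falls under the bounded case of Theorem~\ref{AT1}, giving $A_T^N\to\Ov{U_N}:=\Exp[u_N\,|\,\mathfrak{I}]$ a.s. and in $L^1(\Omega)$.

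The core step is to control the truncation error uniformly in $T$ by the ergodic maximal function $\mathcal{M}r_N:=\sup_{T>0}\frac1T\int_0^T\big(|r_N|\circ S_t\big)\dt$. Wiener's maximal ergodic inequality provides the weak-type bound $\prst\{\mathcal{M}r_N>\lambda\}\le C\lambda^{-1}\Exp[|r_N|]$. Since $|A_T-\Ov{U_N}|\le|A_T^N-\Ov{U_N}|+\mathcal{M}r_N$, letting $T\to\infty$ yields $\limsup_{T}A_T-\liminf_{T}A_T\le 2\,\mathcal{M}r_N$ a.s.; because $|r_N|$ decreases to $0$, monotonicity of $\mathcal{M}r_N$ together with the weak-type bound forces $\mathcal{M}r_N\to0$ a.s., whence $\limsup_T A_T=\liminf_T A_T$ a.s. Thus the limit $\Ov U=\lim_{T\to\infty}A_T$ exists a.s., and the same estimates identify it with $\Exp[u\,|\,\mathfrak{I}]$.

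For the $L^1(\Omega)$ convergence I would split
\[
\Exp[|A_T-\Ov U|]\le \Exp[|A_T-A_T^N|]+\Exp[|A_T^N-\Ov{U_N}|]+\Exp[|\Ov{U_N}-\Ov U|].
\]
Stationarity gives $\Exp[|A_T-A_T^N|]\le \frac1T\int_0^T\Exp[|r_N|]\dt=\Exp[|r_N|]$ and, by conditional Jensen, $\Exp[|\Ov{U_N}-\Ov U|]\le\Exp[|r_N|]$, while the middle term vanishes as $T\to\infty$ by the bounded case. Taking first $T\to\infty$ and then $N\to\infty$ gives $\Exp[|A_T-\Ov U|]\to0$.

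The main obstacle is the continuous-time maximal ergodic inequality controlling $\mathcal{M}r_N$: it is classical (Wiener) but must be invoked carefully for a measure-preserving semigroup on $[0,\infty)$. If one prefers to avoid it, an equivalent route is to discretize, setting $V_k=\int_k^{k+1}U(t)\dt$: this is a stationary integrable sequence to which the classical discrete $L^1$ Birkhoff theorem applies, yielding convergence of $\frac1n\sum_{k=0}^{n-1}V_k=\frac1n\int_0^n U\dt$, and the passage from integer to arbitrary times is settled by the elementary a.s. bound $\frac1n\int_n^{n+1}|U|\dt\to0$ (Borel--Cantelli for an identically distributed integrable family).
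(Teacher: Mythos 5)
Your proposal is correct, but your primary route differs from the paper's. The paper reduces at once to the discrete case: it splits $U$ into positive and negative parts, forms the stationary integrable sequence $U_n=\int_n^{n+1}U(t)\,\dt$, applies the discrete Birkhoff--Khinchin theorem (Krylov) to get convergence along integer times, and then passes to arbitrary $T$ by the elementary sandwich
\[
\tfrac{[T]}{T}\,\tfrac{1}{[T]}\int_0^{[T]}U\ \le\ \tfrac{1}{T}\int_0^{T}U\ \le\ \tfrac{[T]+1}{T}\,\tfrac{1}{[T]+1}\int_0^{[T]+1}U ,
\]
which is where the non-negativity obtained from the splitting is used. Your main argument instead truncates $U$ at level $N$, invokes the bounded case (Kolmogorov's continuous-time ergodic theorem underlying Theorem~\ref{AT1}), and absorbs the remainder $r_N$ via Wiener's weak-type $(1,1)$ maximal ergodic inequality for the shift semiflow; the monotonicity of $\mathcal{M}r_N$ in $N$ plus the weak-type bound correctly forces $\mathcal{M}r_N\to0$ a.s., and your $L^1$ splitting is sound. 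This route is valid, but its key input --- the continuous-parameter maximal inequality --- is a genuinely heavier tool than anything the paper uses, and it carries the joint-measurability caveats you flag (one must work with a measurable version of the semiflow on the canonical space to even define $\mathcal{M}r_N$ as a random variable). Your fallback route is essentially the paper's proof; the only difference is the treatment of non-integer times, where you use the Borel--Cantelli estimate $\frac1n\int_n^{n+1}|U|\,\dt\to0$ a.s. for the identically distributed integrable family in place of the paper's positive-part sandwich --- both work. In short: the maximal-inequality route buys a uniform control of all truncation errors at once, while the paper's discretization buys a proof that needs nothing beyond the discrete ergodic theorem and avoids the measurability subtleties entirely.
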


\begin{proof}

Splitting $U$ into its positive and negative part, we observe that it is enough to show the result for non-negative $U$. 
The integral averages 
\[
U_n = \int_n^{n+1} U(t) \ \dt, \ n=0,1, \dots 
\]
are well defined and represent a stationary discrete process in the sense of Krylov \cite[Chapter~4, Section 6, Definition 1]{Krylovstoch}. Thus applying the discrete version of Birkhoff--Khinchin's Theorem \cite[Chapter 4, Section 6, Theorem 11]{Krylovstoch}  we obtain the existence of $\Ov{U}:\Omega\to R$ such that for  $N\to\infty$ where $N$ takes only discrete values $N=1,2,\dots,$ 
\[
\frac{1}{N} \sum_{n = 1}^{N-1} U_n = \frac{1}{N} \int_0^N U(t) \ \dt \to \Ov{U} \ \mbox{a.s. and in} \  L^{1}(\Omega).
\]

Finally, denoting by $[T]$ the largest integer less than or equal to $T$ and using non-negativity of $U$, we get
\[
\Ov{U} \leftarrow
\frac{[T]}{T} \frac{1}{[T]} \int_0^{[T]} U(t) \ \dt
\leq \frac{1}{T} \int_0^T U(t) \ \dt \leq \frac{[T]+1}{T} \frac{1}{[T] + 1} \int_0^{[T] + 1} U(t) \ \dt 
\rightarrow \Ov{U},
\]
in the limit for for $T\to\infty$.
\end{proof}

With this result in hand, we are able to prove the generalization of Theorem~\ref{AT1}.

\begin{Theorem} \label{ACC1}

Let $[\vr, \vm]$ by a stationary statistical solution of the Navier--Stokes system and $F: H \to R$ Borel measurable such that
\[
\Exp{\Big[|F(\vr(0, \cdot), \vm(0, \cdot)) |\Big] } < \infty.
\]

Then there is an observable function $\Ov{F}$ such that
\[
\frac{1}{T} \int_0^T F(\vr(t,\cdot), \vm(t,\cdot)) \ \dt \to \Ov{F} \ \mbox{as}\ T\to\infty\ \mbox{a.s. and in}\ L^{1}(\Omega).
\]

\end{Theorem}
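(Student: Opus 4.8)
The plan is to deduce the statement directly from the auxiliary Lemma~\ref{ALL1}, applied to the scalar process
\[
U(t) \equiv F(\vr(t, \cdot), \vm(t, \cdot)), \qquad t \geq 0.
\]
Once I have verified that $U$ is a measurable, stationary stochastic process satisfying $\Exp[|U(0)|] < \infty$ and $U \in L^1_{\rm loc}[0, \infty)$ a.s., Lemma~\ref{ALL1} immediately furnishes the observable $\Ov{F} \equiv \Ov{U}$ together with convergence both a.s. and in $L^1(\Omega)$, which is exactly the assertion. So the whole task reduces to checking the three hypotheses of that lemma.

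First I would check measurability and stationarity of $U$. Since a statistical solution is by definition a continuous, hence measurable, process ranging in $H$ (cf. the discussion preceding Definition~\ref{ID1}), the map $(t, \omega) \mapsto [\vr(t, \omega), \vm(t, \omega)]$ is $\mathfrak{B} \otimes \mathfrak{B}(R)$-measurable into $H$; composing with the Borel function $F \colon H \to R$ then shows that $U$ is a measurable process. Stationarity of $U$ follows by pushing the stationarity of $[\vr, \vm]$ through $F$: for any times $t_1 < \dots < t_n$ and any $\tau \in R$, the law of $(U(t_1 + \tau), \dots, U(t_n + \tau))$ is the image under $F^{\otimes n}$ of the joint law of $\left( [\vr(t_i + \tau, \cdot), \vm(t_i + \tau, \cdot)] \right)_{i=1}^n$, which is independent of $\tau$ by Definition~\ref{ID2}; hence the finite-dimensional marginals of $U$ are shift invariant.

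The only genuinely delicate point --- and the reason why the hypothesis is merely $\Exp[|F(\vr(0, \cdot), \vm(0, \cdot))|] < \infty$ rather than boundedness of $F$ as in Theorem~\ref{AT1} --- is establishing $U \in L^1_{\rm loc}[0, \infty)$ a.s. Here I would fix $M \in \N$ and exploit that $(t, \omega) \mapsto |U(t, \omega)|$ is non-negative and jointly measurable, so that Tonelli's theorem combined with stationarity of the one-dimensional marginals gives
\[
\Exp\left[ \int_0^M |U(t)| \dt \right] = \int_0^M \Exp[|U(t)|] \dt = M \, \Exp[|U(0)|] < \infty .
\]
This forces $\int_0^M |U(t)| \dt < \infty$ a.s.; intersecting the corresponding full-probability sets over the countable family $M \in \N$ yields $U \in L^1_{\rm loc}[0, \infty)$ a.s. With all hypotheses of Lemma~\ref{ALL1} thereby in place, the conclusion follows at once. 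I expect this local integrability verification to be the crux of the argument, though it is routine given the stationarity and the integrability of $U(0)$; the measurability and stationarity reductions are essentially formal.
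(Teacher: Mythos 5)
Your proposal is correct and follows essentially the same route as the paper: both reduce Theorem~\ref{ACC1} to Lemma~\ref{ALL1} and verify the a.s.\ local integrability of $t\mapsto F(\vr(t,\cdot),\vm(t,\cdot))$ via Tonelli's theorem, stationarity of the one--dimensional marginals, and intersection over a countable family of intervals. Your additional explicit verification of measurability and stationarity of the composed process is left implicit in the paper but is the same formal observation.
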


\begin{proof}

The proof follows from Lemma \ref{ALL1} as long as we observe that 
$t\mapsto F(\vr(t,\cdot), \vm(t,\cdot))$ is locally integrable a.s. To see this, write 
\[
\Exp{ \left[ \int_{-M}^M |F(\vr(t,\cdot), \vm(t,\cdot))| \ \dt \right]} = 
\int_{-M}^M \Exp\big[{ |F(\vr(t,\cdot), \vm(t,\cdot))| }\big] \dt \leq 2 M \Exp\big[|F(\vr(0,\cdot), \vm(0,\cdot))|\big] < \infty.
\]
Hence, we deduce that 
\[
\mathcal{P} \left\{ \int_{-M}^M |F(\vr(t,\cdot), \vm(t,\cdot))| \ \dt < \infty \right\} = 1 \ \mbox{for any}\ M = 1,2,\dots ,
\] 
and consequently 
\[
\mathcal{P} \Big\{ F(\vr(t,\cdot), \vm(t,\cdot)) \in L^1_{\rm loc}(R) \Big\} = 
\mathcal{P} \left\{\bigcap_{M =1,2,\dots} \left\{ \int_{-M}^M |F(\vr(t,\cdot), \vm(t,\cdot))| \ \dt < \infty \right\}\right\} = 1,
\] 
which completes the proof.
\end{proof}

\begin{Remark} \label{RCC1}

Beware that the set of probability zero in Theorem \ref{ACC1} in general depends on $F$.
\end{Remark}

\section{Ergodic stationary solutions}

Of essential interest in turbulence theory is the so--called ergodic hypothesis, which is usually assumed by physicists and engineers, based on some empirical evidences. Roughly speaking, it assures that time averages along solution trajectories coincide with ensemble averages   with respect to a putative probability measure. This measure can then be shown to be invariant for the dynamics.

In our context, since the energetically open Navier--Stokes system may admit multiple equilibrium states, hence multiple invariant measures, the limit in the ergodic hypothesis necessarily depends on the chosen trajectory $[\vr,\vm]$. For instance,  in the simplest case when $\vu_{b}\equiv 0$ and $\bfg\equiv 0$, then there exists infinitely many deterministic stationary (i.e. time independent) solutions,
given by $\vm\equiv 0$ and $\vr\equiv \vr_{0}$, for every  $\vr_{0}\in[0,\infty)$. Every such solution generates a stationary statistical solution in the sense of Definition~\ref{ID2}
and its law $\mathcal{L}[\vr,\vm]$ is given by the Dirac mass supported on $[\vr,\vm]$.

Therefore, we formulate the ergodic hypothesis as follows:  for an  finite energy weak solution $[\vr,\vm]$ with globally bounded energy, the limit of time averages
\begin{equation}\label{eq:EE1}
\lim_{T\to\infty}\frac{1}{T}\int_{0}^{T}F(\vr(t,\cdot),\vm(t,\cdot))\dt 
\end{equation}
exists for any \emph{bounded continuous} function $F:H\to R.$ Note that the quantity in \eqref{eq:EE1} then defines a probability measure on $H$ which is invariant  for the dynamics.

It will be seen below that the convergence of the ergodic averages \eqref{eq:EE1} corresponds to the ergodicity property of the corresponding invariant measure. This motivates our next definition.

\begin{Definition} [Ergodic stationary statistical solution] \label{AD1}

A stationary statistical solution $[\vr, \vm]$, or its law $\mathcal{L}[\vr, \vm]$ on $\mathcal{T}$, is called \emph{ergodic}, if the $\sigma$-field 
of shift invariant sets is trivial, specifically, 
\[
\mathcal{L}[\vr, \vm] (B) = 1 \ \mbox{or}  \ \mathcal{L}[\vr, \vm](B) = 0 \ \mbox{for any shift invariant Borel set}\ B \in \mathfrak{B}[\mathcal{T}].
\]

\end{Definition}

As a direct consequence of Theorem \ref{AT1} and Theorem~\ref{ACC1}, we get the following result for ergodic stationary statistical solutions.

\begin{Theorem}[Ergodicity] \label{AC1}
Let $[\vr, \vm]$ be an ergodic stationary statistical solution of the Navier--Stokes system. Let $F:H\to R$ be a Borel measurable function such that
$$
\Exp\Big[|F(\vr(0,\cdot),\vm(0,\cdot))|\Big]<\infty.
$$

Then 
\[
\frac{1}{T} \int_0^T F(\vr(t, \cdot), \vm(t, \cdot)) \dt \to \Exp [F(\vr(0,\cdot),\vm(0,\cdot))] \ \mbox{as}\ T \to \infty\ \mbox{a.s.}
\]  

\end{Theorem}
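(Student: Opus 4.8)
The plan is to build directly on Theorem~\ref{ACC1}, which already furnishes an observable limit $\Ov{F}$ with
\[
\frac{1}{T}\int_0^T F(\vr(t,\cdot),\vm(t,\cdot))\dt \longrightarrow \Ov{F}\qquad\text{a.s. and in }L^1(\Omega),
\]
and to show that ergodicity forces $\Ov{F}$ to equal, a.s., the constant $\Exp[F(\vr(0,\cdot),\vm(0,\cdot))]$. Without loss of generality I would work in the canonical realization of Lemma~\ref{lem:ID1}, taking $\Omega=\mathcal{T}$, $\mathcal{P}=\mathcal{L}[\vr,\vm]$ and the canonical process $[\vr,\vm](\omega,t)=\omega(t)$; then $(S_\tau)_{\tau\in R}$ is a measure--preserving flow on $(\mathcal{T},\mathcal{P})$ and $F(\vr(t,\cdot),\vm(t,\cdot))=g(S_t\omega)$ with $g(\omega):=F(\omega(0))$.

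First I would verify that $\Ov{F}$ is shift invariant, i.e. $\Ov{F}\circ S_\tau=\Ov{F}$ a.s. for every $\tau\in R$. Writing the ergodic average for the shifted trajectory and changing variables gives
\[
\frac{1}{T}\int_0^T g(S_{t+\tau}\omega)\dt-\frac{1}{T}\int_0^T g(S_t\omega)\dt
=\frac{1}{T}\int_T^{T+\tau} g(S_s\omega)\,\D s-\frac{1}{T}\int_0^\tau g(S_s\omega)\,\D s .
\]
The second term on the right vanishes as $T\to\infty$ since the integral is a.s. finite, and the first vanishes too: applying Lemma~\ref{ALL1} to $|F|$ (whose expectation is finite by hypothesis) shows that the stationary discrete process $U_n:=\int_n^{n+1}|g(S_s\omega)|\,\D s$ satisfies $U_n/n\to0$ a.s., whence $\frac{1}{T}\int_T^{T+\tau}|g(S_s\omega)|\,\D s\to0$. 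Consequently $\Ov{F}$ is (mod $\mathcal{P}$--null sets) invariant under every $S_\tau$, i.e. measurable with respect to the $\sigma$--field of shift invariant sets.

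By Definition~\ref{AD1}, ergodicity means this $\sigma$--field is trivial, so each level set $\{\Ov{F}>c\}$ has probability $0$ or $1$; hence $\Ov{F}$ equals a.s. a constant $c_0$. To identify $c_0$ I would pass to expectations using the $L^1(\Omega)$ convergence from Theorem~\ref{ACC1}:
\[
c_0=\Exp[\Ov{F}]=\lim_{T\to\infty}\frac{1}{T}\int_0^T\Exp\big[F(\vr(t,\cdot),\vm(t,\cdot))\big]\dt=\Exp\big[F(\vr(0,\cdot),\vm(0,\cdot))\big],
\]
where the last equality uses stationarity, which makes the integrand independent of $t$. This yields the claimed identity.

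The main obstacle I anticipate is precisely the shift--invariance step, and within it the control of the boundary term $\frac{1}{T}\int_T^{T+\tau}g(S_s\omega)\,\D s$ when $F$ is merely integrable rather than bounded: one cannot estimate by $\|F\|_{L^\infty}$ and must instead feed $|F|$ back into the Birkhoff--Khinchin machinery of Lemma~\ref{ALL1} to obtain the required decay. Everything else is the standard ``ergodicity $\Rightarrow$ constant limit'' argument, rendered rigorous by the already established a.s. and $L^1$ convergence.
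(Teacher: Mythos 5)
Your proposal is correct and its skeleton matches the paper's: pass to the canonical process, show the Birkhoff limit $\Ov{F}$ from Theorem~\ref{ACC1} is shift invariant, invoke triviality of the invariant $\sigma$--field, and identify the constant by taking expectations via the $L^1(\Omega)$ convergence and stationarity. The one genuinely different step is how you handle shift invariance of $\Ov{F}$ when $F$ is merely integrable: the paper first treats bounded $F$ (where the boundary terms are killed by $\|F\|_{L^\infty}/T$) and then approximates a general $F$ in $L^1(H,\mathcal{L}[\vr(0,\cdot),\vm(0,\cdot)])$ by bounded functions, showing $\Ov{F}_n\to\Ov{F}$ in $L^1(\Omega)$ through a three--term splitting; you instead control the boundary term $\frac{1}{T}\int_T^{T+\tau}|F(\vr(s,\cdot),\vm(s,\cdot))|\,\D s$ directly by feeding $|F|$ into the discrete Birkhoff theorem of Lemma~\ref{ALL1}, which gives the a.s.\ sublinear growth $\frac1n\int_n^{n+1}|F(\vr(s,\cdot),\vm(s,\cdot))|\,\D s\to0$. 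Your route is arguably more elementary and avoids the approximation argument; the paper's route additionally identifies $\Ov{F}$ as the conditional expectation $\Exp[F(\vr(0,\cdot),\vm(0,\cdot))\,|\,\mathfrak{B}_S[\mathcal{T}]]$ for \emph{every} (not necessarily ergodic) stationary solution, which is reused later (e.g.\ in Corollary~\ref{cor:EE3}). Two small points you should still make explicit: (i) passing from ``$\Ov{F}\circ S_\tau=\Ov{F}$ a.s.\ for each $\tau$'' to measurability with respect to the shift--invariant $\sigma$--field requires the standard identification of almost--invariant with strictly invariant sets (the paper glosses over this too); and (ii) the reduction to the canonical process deserves a sentence: since Theorem~\ref{ACC1} already gives a.s.\ convergence on the original space and the law of the ergodic average depends only on $\mathcal{L}[\vr,\vm]$, convergence in law to the constant identifies the a.s.\ limit, which is exactly how the paper closes its proof.
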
 

\begin{proof}
Let us first prove the claim for the case of the the canonical process on $\mathcal{T}$, i.e.,
$$
(\Omega,\mathfrak{B},\mathcal{P}):=(\mathcal{T},\mathfrak{B}[\mathcal{T}],\mathcal{L}[\vr,\vm]),
\quad [\vr,\vm](t,\omega)=\omega(t),\ \mbox{for}\ t\in R,\  \omega\in\mathcal{T}.
$$
Let us show that the limit of the ergodic averages given by Theorem~\ref{AT1} satisfies
\begin{equation}\label{eq:EE2}
\Ov{F}=\Exp\left[F(\vr(0,\cdot),\vm(0,\cdot))\Big|\mathfrak{B}_{S}[\mathcal{T}]\right],
\end{equation}
where $\mathfrak{B}_{S}[\mathcal{T}]$ is the $\sigma$-algebra of shift invariant sets in $ \mathcal{T}$.
To this end, we first observe that as an immediate consequence of Theorem~\ref{ACC1} together with the shift invariance of $\mathcal{L}[\vr,\vm]$, we obtain
$$
\Ov{F}\in L^{1}(\Omega),\ 
\Exp\left[ \Ov{F}\right]=\Exp\left[ F(\vr(0,\cdot),\vm(0,\cdot))\right].
$$
Next, we prove that $\Ov{F}$ is invariant with respect to time shifts $S_{\tau}$, $\tau\in R$. Indeed, if $F$ is bounded then
$$
\Ov{F}\circ S_{\tau}=\lim_{T\to\infty}\frac{1}{T}\int_{0}^{T}F(S_{\tau}[\vr,\vm](t,\cdot))\,\D t = \lim_{T\to\infty}\frac{1}{T}\int_{0}^{T}F(\vr(t,\cdot),\vm(t,\cdot))\,\D t =\Ov{F}
$$
by the same argument as when proving the shift invariance of the limiting measure $\nu$ in the proof of Theorem~\ref{IT1}.
If $F$ is not bounded but only $F\in L^{1}(H,\mathcal{L}[\vr(0,\cdot),\vm(0,\cdot)])$ by the assumption, then $F$ can be approximated in
$L^{1}(H,\mathcal{L}[\vr(0,\cdot),\vm(0,\cdot)])$ by bounded functions $F_{n}$. Let $\Ov{F}_{n}$ denote the associated limits of ergodic averages. Then we have
$$
\Exp\left[\left|\Ov{F}_{n}-\Ov{F}\right|\right]\leq \Exp\left[\left|\Ov{F}_{n}-\frac{1}{T}\int_{0}^{T}F_{n}(\vr(t,\cdot), \vm(t,\cdot))\,\D t\right|\right]
$$
$$
+\Exp\left[\left|\frac{1}{T}\int_{0}^{T}F_{n}(\vr(t,\cdot), \vm(t,\cdot))\,\D t-\frac{1}{T}\int_{0}^{T}F(\vr(t,\cdot), \vm(t,\cdot))\,\D t\right|\right]
$$
$$
+\Exp\left[\left|\frac{1}{T}\int_{0}^{T}F(\vr(t,\cdot), \vm(t,\cdot))\,\D t-\Ov{F}\right|\right].
$$
For every $n$, the first term vanishes as $T\to\infty$, by stationarity the second term vanishes as $n\to\infty$ uniformly in $T$, and the last term is small for $T\to\infty$. Therefore, we deduce that $\Ov{F}_{n}\to\Ov{F}$ in $L^{1}(\Omega)$. Since all approximations $\Ov{F}_{n}$ are shift invariant, the same remains valid for $\Ov{F}$.

As a consequence of the shift invariance, 
$\Ov{F}$ is measurable with respect to  $\mathfrak{B}_S [\mathcal{T}]$.
Moreover, if $S_{\tau}A=A$ for all $\tau\in R$ then 
\begin{equation*}
\Exp\left[ \Ov{F} {\mds 1}_{A}\right]=\lim_{T\to\infty}\frac{1}{T}\int_{0}^{T}\Exp\left[ {F}(\vr(t,\cdot),\vm(t,\cdot)) {\mds 1}_{A}\right] \dt=\Exp \left[ {F}(\vr(0,\cdot),\vm(0,\cdot)) {\mds 1}_{A}\right].
\end{equation*}
Thus, \eqref{eq:EE2} follows.
By the assumption of ergodicity of the measure $\mathcal{L}[\vr,\vm]$, all the sets in $\mathfrak{B}_{S}[\mathcal{T}]$ are of zero or full measure. Hence we  deduce that
$$
\Exp\left[F(\vr(0,\cdot),\vm(0,\cdot))\Big|\mathfrak{B}_{S}[\mathcal{T}]\right]=\Exp[F(\vr(0,\cdot),\vm(0,\cdot))],
$$
which proves the claim.

Assume now that the stationary statistical solution $[\vr,\vm]$ is defined on some general probability space $(\Omega,\mathfrak{B},\mathcal{P})$. Then the law of the ergodic average
\begin{equation}\label{eq:EE3}
\frac{1}{T} \int_0^T F(\vr(t, \cdot), \vm(t, \cdot)) \dt
\end{equation}
under $\mathcal{P}$
coincides with the law of the ergodic average of the canonical process $[r,\vc{w}]$ under $\mathcal{L}[\vr,\vm]$. According to the previous part of the proof, it therefore follows that the ergodic averages \eqref{eq:EE3} converge in law to the constant given by
\begin{equation}\label{eq:EE4a}
\int_{\mathcal{T}}F(r(0,\cdot),\vc{w}(0,\cdot))\,\D\mathcal{L}[\vr,\vm](r,\vc{w})=\Exp[F(\vr(0,\cdot),\vm(0,\cdot))].
\end{equation}
Therefore, the a.s. limit of \eqref{eq:EE3} is also given by \eqref{eq:EE4a} and the proof is complete.
\end{proof}

\subsection{Existence of ergodic solutions and ergodic decomposition}
\label{ERG}

As a consequence of Theorem~\ref{AC1}, the ergodic hypothesis is valid for ergodic stationary statistical solutions. As the next step, we prove existence of such an ergodic solution and  investigate the ergodic structure of the set of all stationary statistical solutions.

For a shift invariant Borel set $\mathcal{A}\subset\mathcal{U}[\Ov{E}]$, we denote by $\mathcal{I}(\Ov{\mathcal{A}})$ the set of all invariant probability measures for the group $(S_{\tau})_{\tau\in R}$ on $\Ov{\mathcal{A}}$. According to Lemma~\ref{lem:ID1}, the set $\mathcal{I}(\Ov{\mathcal{A}})$ can be identified with the set of stationary statistical solutions in the sense of Definition~\ref{ID1}.

\begin{Theorem}\label{thm:AC1}
Let $\mathcal{A}\subset\mathcal{U}[\Ov{E}]$ be a shift invariant set of trajectories. 
\begin{enumerate}
\item There exists an ergodic stationary statistical solution in $\mathcal{I}(\Ov{A}).$
\item The law of every stationary statistical solution in $\mathcal{I}(\Ov{A})$ is the barycenter of a probability measure supported on the set $\mc I_e(\oline{\mc A})$
of ergodic stationary solutions in $\mathcal{I}(\Ov{\mathcal{A}})$.
\end{enumerate}
\end{Theorem}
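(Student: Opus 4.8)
The plan is to realize $\mathcal{I}(\overline{\mathcal{A}})$ as a nonempty compact convex subset of a locally convex space and to deduce the two assertions from the Krein--Milman and Choquet theorems respectively, the only problem-specific input being the identification of the extreme points of $\mathcal{I}(\overline{\mathcal{A}})$ with the ergodic measures $\mathcal{I}_e(\overline{\mathcal{A}})$.

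First I would fix the topological framework. By Proposition~\ref{TP1} the set $\overline{\mathcal{A}} \subseteq \mathcal{U}[\overline{E}]$ is a compact subset of the Polish space $\mathcal{T}$, hence a compact metric space; consequently the space $\mathfrak{P}(\overline{\mathcal{A}})$ of Borel probability measures is compact and metrizable in the narrow topology and sits as a convex subset of the locally convex space of signed Radon measures with the weak-$*$ topology. The subset $\mathcal{I}(\overline{\mathcal{A}})$ is clearly convex, and it is narrowly closed: if $\mu_n \to \mu$ narrowly with each $\mu_n$ invariant, then for every $G \in BC(\mathcal{T})$ and every $\tau \in R$ the continuity of $S_\tau$ gives $G \circ S_\tau \in BC(\mathcal{T})$, so $\int_{\mathcal{T}} G \circ S_\tau \, \D \mu = \lim_n \int_{\mathcal{T}} G \circ S_\tau \, \D \mu_n = \lim_n \int_{\mathcal{T}} G \, \D \mu_n = \int_{\mathcal{T}} G \, \D \mu$, whence $\mu$ is invariant. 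Therefore $\mathcal{I}(\overline{\mathcal{A}})$ is compact, convex and metrizable, and it is nonempty by Theorem~\ref{IT1}.

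The core lemma I would establish is $\mathrm{ext}\,\mathcal{I}(\overline{\mathcal{A}}) = \mathcal{I}_e(\overline{\mathcal{A}})$. For the implication \emph{not ergodic $\Rightarrow$ not extreme}, if $\mu \in \mathcal{I}(\overline{\mathcal{A}})$ is not ergodic there is a shift invariant Borel set $B$ with $0 < \mu(B) < 1$; the conditional measures $\mu_B(\cdot) = \mu(\,\cdot\, \cap B)/\mu(B)$ and $\mu_{B^c}(\cdot) = \mu(\,\cdot\, \cap B^c)/\mu(B^c)$ are then invariant (using invariance of $B$ and of $\mu$), mutually singular, and satisfy $\mu = \mu(B)\,\mu_B + \mu(B^c)\,\mu_{B^c}$, a nontrivial convex combination. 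For the converse \emph{ergodic $\Rightarrow$ extreme}, suppose $\mu$ is ergodic and $\mu = t\mu_1 + (1-t)\mu_2$ with $0 < t < 1$ and $\mu_i \in \mathcal{I}(\overline{\mathcal{A}})$; then $\mu_1 \ll \mu$ with density $f = \D\mu_1/\D\mu$, and testing the invariance of $\mu_1$ against bounded functions, together with the fact that $(S_\tau)_{\tau \in R}$ is a group (so each $S_\tau$ is invertible with $\mu$-preserving inverse $S_{-\tau}$), yields $\int_{\mathcal{T}} h f \, \D\mu = \int_{\mathcal{T}} h\,(f\circ S_{-\tau})\,\D\mu$ for all bounded measurable $h$, hence $f \circ S_{-\tau} = f$ $\mu$-a.s.\ for every $\tau$; thus $f$ is shift invariant, hence $\mu$-a.s.\ constant by ergodicity, and since $\int_{\mathcal{T}} f\,\D\mu = 1$ we get $f \equiv 1$, i.e. $\mu_1 = \mu = \mu_2$, proving extremality.

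With the lemma in hand, part (1) is immediate: by the Krein--Milman theorem the nonempty compact convex set $\mathcal{I}(\overline{\mathcal{A}})$ has an extreme point, which is an ergodic measure and, via Lemma~\ref{lem:ID1}, corresponds to an ergodic stationary statistical solution. For part (2) I would apply Choquet's theorem: since $\mathcal{I}(\overline{\mathcal{A}})$ is a metrizable compact convex set, $\mathrm{ext}\,\mathcal{I}(\overline{\mathcal{A}})$ is a Borel ($G_\delta$) subset and every $\mu \in \mathcal{I}(\overline{\mathcal{A}})$ is the barycenter of a probability measure $\Theta$ concentrated on $\mathrm{ext}\,\mathcal{I}(\overline{\mathcal{A}}) = \mathcal{I}_e(\overline{\mathcal{A}})$, i.e. $\int_{\mathcal{T}} G\,\D\mu = \int_{\mathcal{I}_e(\overline{\mathcal{A}})} \big( \int_{\mathcal{T}} G\,\D m \big)\,\D\Theta(m)$ for all $G \in BC(\mathcal{T})$, which is exactly the asserted ergodic decomposition. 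The main obstacle is the \emph{ergodic $\Rightarrow$ extreme} step, where the invertibility of the flow must be used to show that the Radon--Nikodym derivative between two invariant measures is itself shift invariant; the measurability and metrizability hypotheses required by Choquet's theorem are then routine consequences of the compact-metric setup recorded in the first paragraph.
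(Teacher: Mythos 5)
Your proposal is correct and follows essentially the same route as the paper: nonemptiness of $\mathcal{I}(\Ov{\mathcal{A}})$ from Theorem~\ref{IT1}, compactness and convexity in the narrow topology, Krein--Milman for existence of an extreme point, the classical identification of extreme points with ergodic measures (which the paper simply cites from Phelps and you prove in detail, modulo the standard step upgrading the $\mu$-a.s.\ invariance of $f$ under each $S_\tau$ to strict invariance before invoking ergodicity), and Choquet's theorem for the decomposition. No gaps beyond that routine technicality.
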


\begin{proof}
By Theorem~\ref{IT1}, the set $\mathcal{I}(\Ov{\mathcal{A}})$ is non-empty. Moreover, it is easy to check that it is  convex and compact with respect to the weak convergence of probability measures. Hence according to Krein--Milman's theorem, there exists at least one extremal point of $\mathcal{I}(\Ov{\mathcal{A}})$. It is classical to show that the extremal points are exactly the ergodic invariant measures in $\mathcal{I}(\Ov{\mathcal{A}})$, see e.g. \cite[Proposition~12.4]{Phel}. So the first statement is proved. 

Let $\nu\in\mathcal{I}(\Ov{\mathcal A})$.  By Choquet's version of  Krein--Milman's theorem \cite[Section 3]{Phel}, $\nu$ is  the barycenter of a probability measure $m$ supported by the extremal points of $\mathcal{I}(\Ov{\mathcal A})$, i.e., by the ergodic measures on $\Ov{\mathcal A}$. Thus, we can write
\begin{equation}\label{eq:EE4}
\nu=\int_{\mathcal{I}_{e}(\Ov{\mathcal A})}\mu\,\D m(\mu),
\end{equation}
where $\mathcal{I}_{e}(\mathcal{\oline A})$ denotes the set of all ergodic measures on $\Ov{\mathcal A}$. 
\end{proof}

Alternatively to the second statement in Theorem~\ref{thm:AC1}, one may say that every invariant measure in $\mathcal{I}(\Ov{\mathcal A})$ is approximated by finite convex combinations of ergodic measures in $\mathcal{I}(\Ov{\mathcal A})$. We call \eqref{eq:EE4} the ergodic decomposition of $\nu$.

Recall that the pointwise ergodic theorem,  Theorem~\ref{AT1}, yields a.s. convergence of the ergodic averages for every stationary statistical solution. This is not fully satisfactory as the example at the beginning of this section shows. Indeed, in a certain setting, the  time independent solutions $\vr\equiv\vr_{0}>0$, $\vm\equiv 0$ generate invariant measures $\delta_{[\vr,\vm]}$ whose support is a singleton. Accordingly, Theorem~\ref{AT1} does not bring any interesting information. However, it can be observed that these trivial invariant measures are ergodic and further invariant measures are obtained by their convex combinations.

As a consequence of the ergodic decomposition, we obtain the following result on the structure of the  support of all the  invariant measures.

\begin{Corollary}\label{cor:EE3}
Let $\mathcal{A}\subset\mathcal{U}[\Ov{E}]$ be a shift invariant set of trajectories. Then
\begin{equation}\label{eq:EE5}
\cup\big\{\supp\nu;\,\nu\in\mathcal{I}(\Ov{\mathcal{A}})\big\}=\cup\big\{\supp\nu;\,\nu\in\mathcal{I}_{e}(\Ov{\mathcal{A}})\big\}.
\end{equation}
Moreover, two distinct ergodic invariant measures are singular.
\end{Corollary}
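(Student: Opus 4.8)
The plan is to realise each stationary solution as the canonical process on $(\mathcal{T},\mathfrak{B}[\mathcal{T}],\nu)$, exactly as in the proof of Theorem~\ref{AC1}, and to treat the two assertions separately. For the identity \eqref{eq:EE5} the inclusion $\supseteq$ is immediate, since $\mathcal{I}_e(\Ov{\mathcal{A}})\subseteq\mathcal{I}(\Ov{\mathcal{A}})$. For the reverse inclusion I would fix $\nu\in\mathcal{I}(\Ov{\mathcal{A}})$, invoke the ergodic decomposition \eqref{eq:EE4} in the form $\nu(B)=\int_{\mathcal{I}_e(\Ov{\mathcal{A}})}\mu(B)\,\D m(\mu)$ for every Borel $B$, and realise the decomposition concretely: by the Birkhoff--Khinchin theorem (Theorem~\ref{AT1}) the empirical limits $\mu_\omega=\lim_{T\to\infty}\frac{1}{T}\int_0^T\delta_{S_t\omega}\,\D t$ exist for $\nu$-a.e.\ $\omega$, are ergodic, and have $m$ as their law.

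The key observation I would then use is that $\omega\in\supp\mu_\omega$ for $\nu$-a.e.\ $\omega$: since an ergodic measure charges its own support fully and satisfies $\mu_\omega=\mu$ for $\mu$-a.e.\ $\omega$, the set $G=\{\omega:\omega\in\supp\mu_\omega\}$ has full $\nu$-mass and is contained in $\bigcup_{\mu\in\mathcal{I}_e(\Ov{\mathcal{A}})}\supp\mu$. This pins $\supp\nu$ to the union of ergodic supports. To reach the literal equality of the two (a priori non-closed) unions, I would complement this with a separability argument: a countable weakly dense family $\{\mu_i\}\subseteq\mathcal{I}_e(\Ov{\mathcal{A}})$ produces the invariant measure $\nu_0=\sum_i2^{-i}\mu_i\in\mathcal{I}(\Ov{\mathcal{A}})$ whose support equals $\Ov{\bigcup_i\supp\mu_i}=\Ov{\bigcup_{\mu\in\mathcal{I}_e(\Ov{\mathcal{A}})}\supp\mu}$, by lower semicontinuity of the support under weak limits. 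I expect this passage to be the main obstacle: moving from the integral identity $\nu(U)=\int\mu(U)\,\D m(\mu)>0$ for neighbourhoods $U$ of a point of $\supp\nu$ — which only gives positivity of $\mu(U)$ on a $U$-dependent set of ergodic components — to a pointwise membership in a single $\supp\mu$ is exactly where a naive nested-neighbourhood intersection fails, and where the a.s.\ support property of the disintegration (and the possible gap between $\bigcup_\mu\supp\mu$ and its closure) must be handled with care.

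For the singularity of two distinct ergodic measures $\mu_1\neq\mu_2$ in $\mathcal{I}_e(\Ov{\mathcal{A}})$, the argument is clean and standard. Since $\mu_1\neq\mu_2$ as Borel measures on the Polish space $\mathcal{T}$, I would pick a bounded continuous $G$ on $\mathcal{T}$ with $\int_{\mathcal{T}}G\,\D\mu_1\neq\int_{\mathcal{T}}G\,\D\mu_2$, set $c_i=\int_{\mathcal{T}}G\,\D\mu_i$, and apply the Birkhoff--Khinchin theorem together with ergodicity (as in the proof of Theorem~\ref{AC1}): the averages $\frac{1}{T}\int_0^TG(S_t\,\cdot\,)\,\D t$ converge $\mu_i$-a.s.\ to the constant $c_i$. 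The shift-invariant Borel sets $A_i=\{\omega:\lim_{T\to\infty}\frac{1}{T}\int_0^TG(S_t\omega)\,\D t=c_i\}$ satisfy $\mu_i(A_i)=1$, and they are disjoint because $c_1\neq c_2$. Hence $\mu_1(A_2)=0=\mu_2(A_1)$, so $\mu_1$ is concentrated on $A_1$ and $\mu_2$ on its complement, which is precisely mutual singularity $\mu_1\perp\mu_2$.
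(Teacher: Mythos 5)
Your proof of the singularity statement is correct and follows the same route as the paper: separate $\mu_1\neq\mu_2$ by an observable, apply the Birkhoff--Khinchin theorem plus ergodicity to each canonical process, and note that the sets where the ergodic averages converge to the two distinct constants are shift invariant, disjoint, and of full measure for the respective measures. Your version is in fact tighter than the paper's, whose closing step (``if the two measures are not singular, there exists a Borel set $C$ with $\nu(C)>0$, $\mu(C)>0$'') is not by itself a characterization of non-singularity; your explicit disjoint invariant sets $A_1,A_2$ with $\mu_i(A_i)=1$ deliver mutual singularity directly.

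For the identity \eqref{eq:EE5} the comparison is more delicate. The paper's entire proof of the inclusion $\subseteq$ is the sentence ``by the ergodic decomposition, if a point belongs to the support of some invariant measure then it belongs to the support of at least one ergodic invariant measure'' --- which is exactly the implication you identify as the main obstacle. Your diagnosis is right: the nested-neighbourhood argument fails because the set of ergodic components charging $B_{1/n}[r,\vc{w}]$ depends on $n$ and may shrink to an $m$-null (even empty) set. Your generic-point argument ($\omega\in\supp\mu_\omega$ for $\nu$-a.e.\ $\omega$, using that an ergodic $\mu$ satisfies $\mu(\supp\mu)=1$ and $\mu_\omega=\mu$ $\mu$-a.s.) is sound and yields $\supp\nu\subseteq\Ov{\cup\{\supp\mu;\,\mu\in\mathcal{I}_e(\Ov{\mathcal{A}})\}}$, which, combined with your separability construction $\nu_0=\sum_i 2^{-i}\mu_i$, proves the equality of the \emph{closures} of the two unions --- already more than the paper justifies. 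But the literal equality cannot be reached at this level of generality: consider a compact flow consisting of disjoint periodic circles $\gamma_n$ accumulating in Hausdorff distance on an invariant circle $\gamma$ carrying north--south dynamics with rest points $N,S$. The ergodic measures are the periodic measures $\mu_{\gamma_n}$ (full support in $\gamma_n$) and $\delta_N,\delta_S$, so the union of ergodic supports misses every $q\in\gamma\setminus\{N,S\}$; yet $\nu=\sum_n 2^{-n}\mu_{\gamma_n}$ is invariant and every ball around $q$ meets infinitely many $\gamma_n$, so $q\in\supp\nu$. Nothing in the corollary or in either proof uses more structure than a continuous group of shifts on a compact invariant set, so \eqref{eq:EE5} should be read with closures on both sides --- precisely the statement your argument actually establishes. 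In short: your hesitation is not a gap relative to the paper but an honest exposure of a step the paper asserts without proof and which, as stated, is false in this generality.
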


\begin{proof}
Since $\mathcal{I}_{e}(\Ov{\mathcal{A}})\subset \mathcal{I}(\Ov{\mathcal{A}})$, the right hand side of \eqref{eq:EE5} is a subset of the left hand side. To show the converse inclusion,
we recall that, by the ergodic decomposition \eqref{eq:EE4}, if a point $[\vr,\vm]$ belongs to the support of some invariant measure $\nu$ then it belongs to the support of at least one ergodic invariant measure, hence \eqref{eq:EE5} follows.

It remains to show that any two different ergodic invariant measures are singular.
Indeed, if there are two distinct invariant measures $\nu$, $\mu$, then there exists a Borel set $B\subset \Ov{\mathcal A}$ such that $\nu(B)\neq\mu (B)$.
According to Theorem~\ref{AT1} and Theorem~\ref{AC1} applied to the canonical process on $\mathcal{T}$ and $F={\mds 1}_{B}$, it follows that the ergodic averages converge $\nu$-a.s. to
$
\nu(B)
$
and $\mu$-a.s. to $\mu(B)$. If the two measures are not singular, there exists a Borel set $C\subset\Ov{\mathcal A}$ such that $\nu(C)>0$, $\mu(C)>0$ and we obtain a contradiction.
\end{proof}

\subsection{Minimality and ergodicity}

Recall that Theorem~\ref{IT1} can be applied on an arbitrary  shift invariant set $\mathcal{A}\subset\mathcal{U}[\Ov{E}]$. Due to Proposition~\ref{TP1}, the whole set $\mathcal{U}[\Ov{E}]$ is one possibility and in view of  Corollary~\ref{TP2}, another natural possibility is any $\omega$--limit set $\omega[\vr,\vm]$. Since all these examples are non-empty closed subsets of the compact set $\mathcal{U}[\Ov{E}]$, we may consider a partial ordering  given  by the relation $\subsetneq$. 
A non-empty, closed and shift invariant set $\mathcal{A}\subset\mathcal{U}[\Ov{E}]$ is called \emph{minimal} provided it does not contain any proper subset, which is also non-empty closed and shift invariant. It turns out that the minimal elements are intimately related to ergodic invariant measures.

\begin{Proposition}
The following statements hold true:
\begin{enumerate}
\item every non-empty closed and shift invariant set $\mathcal{A}\subset\mathcal{U}[\Ov{E}]$ contains a minimal set;
\item any two minimal sets are disjoint;
\item if $\mathcal{A}\subset\mathcal{U}[\Ov{E}]$ is minimal then there is at most one  invariant measure on $\mathcal{A}$ and it is ergodic.
\end{enumerate}
\end{Proposition}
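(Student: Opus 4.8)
The plan is to treat all three assertions as standard facts of topological dynamics on the compact metric space $\mathcal{A}$: being a closed subset of the compact set $\mathcal{U}[\Ov{E}]$ (Proposition~\ref{TP1}) on which the group $(S_\tau)_{\tau\in R}$ acts continuously, $\mathcal{A}$ is itself a compact invariant system. For (1) I would apply Zorn's lemma to the family $\mathcal{F}$ of all non-empty, closed, shift-invariant subsets of $\mathcal{A}$, ordered by reverse inclusion. The one point to check is that every chain admits an upper bound: its members form a nested family of non-empty closed subsets of a compact space, so by the finite intersection property their intersection is non-empty, and it is again closed and shift-invariant, hence an element of $\mathcal{F}$. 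A maximal element for reverse inclusion is then a set minimal for inclusion, i.e. a minimal set inside $\mathcal{A}$.

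For (2), if $\mathcal{A}_1$ and $\mathcal{A}_2$ are minimal and $\mathcal{A}_1\cap\mathcal{A}_2\neq\emptyset$, then $\mathcal{A}_1\cap\mathcal{A}_2$ is non-empty, closed, and shift-invariant, since $S_\tau(\mathcal{A}_1\cap\mathcal{A}_2)\subset S_\tau\mathcal{A}_1\cap S_\tau\mathcal{A}_2=\mathcal{A}_1\cap\mathcal{A}_2$ (recall that group invariance forces $S_\tau\mathcal{A}_i=\mathcal{A}_i$). Minimality of each $\mathcal{A}_i$ applied to this common subset gives $\mathcal{A}_1=\mathcal{A}_1\cap\mathcal{A}_2=\mathcal{A}_2$, so two minimal sets are either disjoint or equal.

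For (3) I would start from the two easy halves. Existence of at least one invariant measure on $\mathcal{A}$, and of an ergodic one, follows from the Krylov--Bogolyubov construction together with Krein--Milman exactly as in Theorem~\ref{IT1} and Theorem~\ref{thm:AC1}; and every invariant measure $\mu$ on a minimal $\mathcal{A}$ has full support, because $\supp\mu$ is a non-empty closed shift-invariant set and hence equals $\mathcal{A}$. The substantive content is the uniqueness clause, and this is the step I expect to be the main obstacle. The natural attempt is to combine the ergodic decomposition (Theorem~\ref{thm:AC1}) with the mutual singularity of distinct ergodic measures (Corollary~\ref{cor:EE3}); however, two distinct ergodic measures can be mutually singular while both having full support, so minimality alone does not exclude them -- minimal systems need not be uniquely ergodic (e.g. Keane's minimal, non-uniquely-ergodic interval exchanges). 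I therefore expect the uniqueness assertion to require input beyond pure topological dynamics, for instance an argument exploiting the continuity along orbits of $\mathcal{A}$ of the ergodic averages $\frac{1}{T}\int_0^T F(\vr,\vm)\,\dt$ for $F\in BC(H)$, and it is precisely here that the real work of (3) must be concentrated.
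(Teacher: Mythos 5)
Your treatment of parts (1) and (2) coincides with the paper's: the paper simply invokes Zorn's lemma for (1) and minimality for (2), and the details you supply (the finite intersection property along a chain of non-empty closed invariant subsets of the compact set $\mathcal{A}$, and the fact that the intersection of two minimal sets is again closed and shift invariant) are exactly the routine steps being suppressed there.

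For part (3) your proposal stops short of proving uniqueness, but your diagnosis of where the difficulty lies is on target, and it should be said plainly that the paper's own argument does not overcome it. The paper reasons: by minimality every closed shift-invariant subset of $\mathcal{A}$ is empty or all of $\mathcal{A}$; ``as a consequence, every invariant measure on $\mathcal{A}$ is ergodic''; and then two distinct ergodic measures would have a strict convex combination that is invariant but not ergodic, a contradiction. The quoted inference is a non-sequitur: ergodicity requires triviality (modulo null sets) of all shift-invariant \emph{Borel} sets, and an invariant Borel set of intermediate measure need not be closed --- its closure is closed and invariant, hence equal to $\mathcal{A}$, which yields no contradiction. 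As you observe, minimality does not imply unique ergodicity for general topological dynamical systems (Furstenberg's skew products, Keane's interval exchanges), so no argument confined to the topological dynamics of $(S_\tau)_{\tau\in R}$ on $\mathcal{A}\subset\mathcal{U}[\Ov{E}]$ can establish the uniqueness claim; some additional structure of the trajectory dynamics would have to be brought in, and neither your proposal nor the paper supplies it. In short: your (1) and (2) are correct and match the paper; the obstacle you anticipated in (3) is genuine, and it is present in the paper's proof as well.
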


\begin{proof}
The first point  is a consequence of Zorn's lemma, whereas the second one is an immediate consequence of minimality.
From minimality we know that if $E\subset \mathcal{A}$ is closed and shift invariant then either $E=\emptyset$ or $E=\mathcal{A}$. As a consequence,  every invariant measure on $\mathcal{A}$ is ergodic. If there were two different ergodic invariant measures, then their strict convex combination cannot be ergodic as it is not an extremal point. Hence we get a contradiction and there is only one invariant measure which is ergodic.
\end{proof}

The above lemma gives a recipe for the construction of ergodic invariant measures by restricting to minimal sets. Let us conclude this section with a result describing the structure of the minimal sets. The proof is not complicated and can be found in the literature on dynamical systems. To formulate the result, we recall that the set  $\{S_{\tau}[\vr,\vm];\tau\in R\}$ is called \emph{orbit} of $[\vr,\vm]\in\mathcal{U}[\Ov{E}]$.

\begin{Lemma}\label{lem:R1}
Let $\mathcal{A}\subset\mathcal{U}[\Ov{E}]$ be non-empty, closed and shift invariant. Then the following are equivalent:
\begin{enumerate}
\item $\mathcal{A}$ is minimal;
\item $\mathcal{A}$ is the orbit closure of every one of its points;
\item $\mathcal{A}$ is the $\omega$-limit of every one of its points.
\end{enumerate}
\end{Lemma}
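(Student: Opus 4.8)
The plan is to establish the cyclic chain of implications $(1)\Rightarrow(2)\Rightarrow(3)\Rightarrow(1)$, relying throughout on the compactness of $\mathcal{U}[\Ov{E}]$ from Proposition~\ref{TP1} and on three elementary structural facts about the flow $(S_\tau)_{\tau\in R}$ that I would record first. For any $[\vr,\vm]\in\mathcal{A}$: (a) the orbit closure $\Ov{\{S_\tau[\vr,\vm];\tau\in R\}}$ is a non-empty, closed, shift invariant subset of $\mathcal{A}$, using closedness and shift invariance of $\mathcal{A}$ together with the continuity of the group $(S_\tau)$; (b) the $\omega$-limit set $\omega[\vr,\vm]$ is non-empty --- here compactness of $\mathcal{A}$ is essential, as it guarantees that $S_{T_n}[\vr,\vm]$ admits a convergent subsequence --- and is likewise closed, with the inclusion $\omega[\vr,\vm]\subset\Ov{\{S_\tau[\vr,\vm];\tau\in R\}}\subset\mathcal{A}$; (c) $\omega[\vr,\vm]$ is shift invariant, since $S_{T_n+\tau}[\vr,\vm]=S_\tau S_{T_n}[\vr,\vm]\to S_\tau[r,\vc{w}]$ whenever $S_{T_n}[\vr,\vm]\to[r,\vc{w}]$, by continuity of $S_\tau$ and $T_n+\tau\to\infty$.

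With these facts in hand, the implication $(1)\Rightarrow(2)$ is immediate: if $\mathcal{A}$ is minimal, then for any of its points the orbit closure is a non-empty closed shift invariant subset of $\mathcal{A}$ by (a), hence coincides with $\mathcal{A}$. For $(2)\Rightarrow(3)$ I would fix $[\vr,\vm]\in\mathcal{A}$ and pick any $[r,\vc{w}]\in\omega[\vr,\vm]$, which is possible since $\omega[\vr,\vm]\neq\emptyset$ by (b). Because $\omega[\vr,\vm]$ is shift invariant and closed by (b) and (c), it contains the entire orbit closure of $[r,\vc{w}]$; but hypothesis (2) says that orbit closure equals $\mathcal{A}$, so $\mathcal{A}\subset\omega[\vr,\vm]\subset\mathcal{A}$, whence $\omega[\vr,\vm]=\mathcal{A}$.

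Finally, $(3)\Rightarrow(1)$ proceeds by contradiction. If $\mathcal{A}$ were not minimal, there would exist a proper non-empty closed shift invariant subset $\mathcal{B}\subsetneq\mathcal{A}$. Choosing any $[\vr,\vm]\in\mathcal{B}$, the shift invariance and closedness of $\mathcal{B}$ force $\omega[\vr,\vm]\subset\mathcal{B}$, whereas (3) asserts $\omega[\vr,\vm]=\mathcal{A}$; this yields $\mathcal{A}\subset\mathcal{B}$, contradicting $\mathcal{B}\subsetneq\mathcal{A}$. Thus $\mathcal{A}$ is minimal and the three statements are equivalent.

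I do not anticipate a genuine obstacle here, as this is the classical characterization of minimal sets in topological dynamics; the only step requiring care is $(2)\Rightarrow(3)$, where one must use that $\omega[\vr,\vm]$ itself --- not merely the orbit closure of $[\vr,\vm]$ --- contains the orbit closure of each of its own points, combined with the compactness-driven non-emptiness of $\omega[\vr,\vm]$. For this reason I would state the structural facts (a)--(c) explicitly at the outset, so that each of the three implications reduces to a one-line deduction.
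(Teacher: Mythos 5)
Your proof is correct and complete; the cyclic chain $(1)\Rightarrow(2)\Rightarrow(3)\Rightarrow(1)$, resting on the compactness of $\mathcal{U}[\Ov{E}]$ from Proposition~\ref{TP1} and the closedness/shift invariance of orbit closures and $\omega$-limit sets, is exactly the classical argument. The paper itself omits the proof and defers to the dynamical-systems literature, so your write-up simply supplies the standard details the authors had in mind.
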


\section{Validity of ergodic hypothesis}
\label{V}


To summarize our investigations, by Corollary~\ref{TP2} and Theorem~\ref{IT1}, every finite energy weak solution $[\vr,\vm]$ with a uniformly bounded energy converges (for a sequence of time shifts) to a solution $[r,\vc{w}]\in\omega[\vr,\vm]$ which belongs to the support of some invariant measure. According to Corollary~\ref{cor:EE3}, this solution $[r,\vc{w}]$  belongs to the support of at least one ergodic invariant measure and
Theorem~\ref{AC1}  provides the description of the limit of the corresponding ergodic averages.

On the other hand,  since also the set $\mathcal{U}[\Ov{E}]$ is compact, we could apply the Krylov--Bogolyubov procedure as in the proof of Theorem~\ref{IT1} directly to the family of measures
\begin{equation*}
T\mapsto\frac{1}{T}\int_{0}^{T}\delta_{S_{t}(\vr,\vm)}\D t.
\end{equation*}
In other words, for every finite energy weak solution $[\vr,\vm]$ with uniformly bounded energy, the ergodic averages converge --  up to a subsequence -- and define an invariant measure. The principle question of convergence of the whole sequence remains open in general. However, we may formulate the following ergodic localization principle, which permits to  reduce the problem of validity of the ergodic hypothesis to investigation of the structure of the $\omega$--limit sets.

\begin{Theorem}\label{thm:8.1}
Let $[\vr,\vm]$ be a finite energy weak solution with uniformly bounded energy and assume that there is a unique invariant measure $\nu$ on its $\omega$--limit set $\omega[\vr, \vm] \subset \mathcal{U}(\Ov{E})\subset\mathcal{T}$.
Then the ergodic hypothesis holds for $[\vr,\vm]$.

In particular, for every bounded continuous function $F:H\to R$,
one has
$$
\frac{1}{T}\int_{0}^{T}F(\vr(t,\cdot),\vm(t,\cdot))\,\D t \to \int_{H}F\,\D\nu_{0}\ \mbox{as}\ T\to\infty,
$$
where $\nu_{0}=\nu\circ \pi_{0}^{-1}$ is the  pushforward measure  on $H$ generated by the projection $\pi_{0}:\mathcal{T}\to H $  at time $t=0$.
\end{Theorem}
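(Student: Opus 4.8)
The plan is to reduce everything to the narrow convergence, as $T\to\infty$, of the empirical measures
\[
\nu_T \equiv \frac{1}{T}\int_0^T \delta_{S_t[\vr,\vm]}\,\dt \in \mathfrak{P}(\mathcal{T}),
\]
associated with the extended trajectory $[\vr,\vm]\in\mathcal{T}$ (constant for $t\le 0$). Granting that $\nu_T\to\nu$ narrowly, the conclusion is immediate: since $\pi_0:\mathcal{T}\to H$ is continuous with respect to $d_{\mathcal{T}}$ (the metric controls the value at $t=0$ in $H=L^1(Q)\times W^{-k,2}(Q;R^3)$) and $F$ is bounded \emph{and} continuous, the functional $G\equiv F\circ\pi_0$ belongs to $BC(\mathcal{T})$. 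Because $\pi_0(S_t[\vr,\vm])=[\vr(t,\cdot),\vm(t,\cdot)]$, one has $\int_{\mathcal{T}}G\,\D\nu_T=\frac{1}{T}\int_0^T F(\vr(t,\cdot),\vm(t,\cdot))\,\dt$, and narrow convergence together with the change-of-variables formula for the pushforward $\nu_0=\nu\circ\pi_0^{-1}$ yields $\int_{\mathcal{T}}G\,\D\nu_T\to\int_{\mathcal{T}}F\circ\pi_0\,\D\nu=\int_H F\,\D\nu_0$. Thus the heart of the matter is to prove that the \emph{full} family $\{\nu_T\}$, not merely a subsequence, converges to $\nu$.

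Next I would establish relative compactness of $\{\nu_T\}$. The key input is Theorem~\ref{aP1}: for any sequence $t_n\to\infty$, the shifts $[\vr,\vm](\cdot+t_n)$ are finite energy weak solutions on $(-t_n,\infty)$ with $-t_n\to-\infty$ and the uniform energy bound $\Ov{E}$, hence admit a subsequence converging in $[\mathcal{T},d_{\mathcal{T}}]$. Consequently the forward orbit $\{S_t[\vr,\vm]:t\ge 0\}$ is relatively compact, and every $\nu_T$ is supported in its (compact) closure; the family $\{\nu_T\}$ is therefore tight, and by Prokhorov's theorem relatively compact in the narrow topology.

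Then I would identify every subsequential limit. Let $\nu_{T_k}\to\mu$ narrowly with $T_k\to\infty$. Shift invariance of $\mu$ follows verbatim from the computation in the proof of Theorem~\ref{IT1}, the correction terms $\frac{1}{T_k}\bigl[\int_{T_k}^{\tau+T_k}-\int_0^\tau\bigr]$ being of size $O(|\tau|\,\|G\|_\infty/T_k)$. To see that $\mu$ is carried by $\omega[\vr,\vm]$, I would use that relative compactness of the forward orbit forces the trajectory to enter and remain in every open neighborhood $\mathcal{O}$ of the compact set $\omega[\vr,\vm]$: there is $T_0$ with $S_t[\vr,\vm]\in\mathcal{O}$ for all $t\ge T_0$. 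Hence for any closed $\mathcal{F}$ disjoint from $\omega[\vr,\vm]$ (so contained in some such $\mathcal{O}^c$) one has $\nu_{T_k}(\mathcal{F})\le T_0/T_k\to 0$, and the Portmanteau inequality gives $\mu(\mathcal{F})\le\limsup_k\nu_{T_k}(\mathcal{F})=0$; writing $\mathcal{T}\setminus\omega[\vr,\vm]$ as a countable union of such closed sets yields $\mu\bigl(\omega[\vr,\vm]\bigr)=1$. Thus $\mu\in\mathcal{I}(\omega[\vr,\vm])$, and by the uniqueness hypothesis $\mu=\nu$. Since every subsequential narrow limit equals $\nu$ and the family is relatively compact, $\nu_T\to\nu$, which completes the argument.

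The main obstacle I anticipate is the localization of the limit measure onto $\omega[\vr,\vm]$ rather than merely onto the orbit closure; this is exactly where the relative compactness supplied by Theorem~\ref{aP1}, through the eventual-entrance property of $\omega$-limit sets, is indispensable, and it is the only place where the uniqueness assumption is genuinely used to upgrade subsequential convergence to convergence of the whole family.
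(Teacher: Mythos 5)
Your proposal is correct and follows essentially the same route as the paper: Krylov--Bogolyubov tightness for a subsequential narrow limit, localization of the limit measure onto $\omega[\vr,\vm]$, the uniqueness hypothesis to upgrade subsequential convergence to convergence of the whole family $\nu_T\to\nu$, and composition with the continuous projection $\pi_0$ to conclude. The only (inessential) difference is in the localization step: the paper shows directly that every point of $\supp\nu$ is visited by the orbit at arbitrarily large times (using that $t\mapsto\delta_{S_t[\vr,\vm]}(B_\varepsilon[r,\vc{w}])$ takes only the values $0,1$ while its time integral diverges), whereas you use the eventual-entrance property of the forward orbit into open neighborhoods of $\omega[\vr,\vm]$ together with the Portmanteau inequality on closed sets disjoint from it; both arguments are standard and correct.
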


\begin{proof}
By  Krylov--Bogolyubov's argument, there is a sequence of times $T_{n}\to\infty$ and a measure $\nu\in\mathfrak{P}(\mathcal{U}[\Ov{E}])$ such that
\[ \frac{1}{T_{n}} \int_0^{T_{n}} \delta_{S_t (\varrho, \vm)} \,\D t \to\nu\ \mbox{narrowly in}\ \mathfrak{P}(\mathcal{U}[\Ov{E}]) . \]
Next, we claim that $\tmop{supp} \,\nu \subset \omega [\varrho, \vm]$.
If $[r, \vc{w}] \in \tmop{supp} \,\nu$, then for every $\varepsilon > 0$ it holds
$\nu (B_{\varepsilon} [r, \vc w]) > 0$, where $B_{\varepsilon} [r, \vc w]$ is the
open $\varepsilon$-neighborhood of $[r, \vc w]$ in $\mathcal{T}$. Since
$B_{\varepsilon} [r, \vc w]$ is open, the narrow convergence implies
\[ \liminf_{n \rightarrow \infty} \frac{1}{T_n} \int_0^{T_n} \delta_{S_t
   (\varrho, \vm)} (B_{\varepsilon} [r, \vc w]) \,\D t \geq \nu (B_{\varepsilon}
   [r, \vc w]) > 0, \]
hence
\[ \liminf_{n \rightarrow \infty} \int_0^{T_n} \delta_{S_t (\varrho, \vm)} (B_{\varepsilon} [r, \vc w]) \,\D t
   =\infty . \]
The integrand only takes values $0, 1$ and therefore there is an arbitrarily large time $\tau_{\varepsilon}>0$ so that
\[ [\varrho, \vm] (\cdot + \tau_\varepsilon) \in B_{\varepsilon} [r, \vc w] . \]
In other words, 
$[r, \vc w] \in \omega [\varrho, \vm]$.

Since there is at most one invariant measure on $\omega[\vr,\vm]$ by our assumption, $\nu$ is this measure and we get the convergence of the whole sequence
$$
\frac{1}{T}\int_{0}^{T}\delta_{S_{t}(\vr,\vm)}\D t\to \nu\ \mbox{narrowly in}\ \mathfrak{P}(\mathcal{U}[\Ov{E}]).
$$
Accordingly, for every bounded continuous function $G\in BC(\mathcal{T})$ it holds
\begin{equation}\label{eq:ERG}
\frac{1}{T}\int_{0}^{T}G(\vr(\cdot+t),\vm(\cdot+t))\,\D t \to \int_{\mathcal{T}}G(r,\vc{w})\,\D\nu(r,\vc{w})\ \mbox{as}\ T\to\infty.
\end{equation}
At this point, if $F\in BC(H)$ then taking $G=F\circ \pi_{0}$ in \eqref{eq:ERG} completes the proof.
\end{proof}

%
%
%

\def\cprime{$'$} \def\ocirc#1{\ifmmode\setbox0=\hbox{$#1$}\dimen0=\ht0
  \advance\dimen0 by1pt\rlap{\hbox to\wd0{\hss\raise\dimen0
  \hbox{\hskip.2em$\scriptscriptstyle\circ$}\hss}}#1\else {\accent"17 #1}\fi}


\end{document}